\date{\today}
\title{Integrability of Free Noncommutative Functions}
\author{Dmitry Kaliuzhnyi-Verbovetskyi}
\address{Department of Mathematics \\
Drexel University\\
3141 Chestnut St.\\
Philadelphia, PA, 19104}
\email{dmitryk@math.drexel.edu}
\author{Leonard Stevenson}
\address{Department of Mathematics \\
Drexel University\\
3141 Chestnut St.\\
Philadelphia, PA, 19104}
\email{ls879@drexel.edu}
\author{Victor Vinnikov}
\address{Department of Mathematics\\
Ben-Gurion University of the Negev\\
Beer-Sheva, Israel, 84105} 
\email{vinnikov@math.bgu.ac.il}
\thanks{DK-V was partially supported by NSF grant DMS-0901628.
DK-V and VV were partially supported by BSF grant 2010432, and by
ANR-11-LABX-0040-CIMI within the program ANR-11-IDEX-0002-02. They also wish to thank Serban Belinschi for organizing the workshop ``Complex Analysis and Noncommutative Functions", 3--7 October 2016, in CIMI, Toulouse, where a significant progress in this research has been achieved.}
\renewcommand{\(}{\left(}
\renewcommand{\)}{\right)}
\renewcommand{\[}{\left[}
\renewcommand{\]}{\right]}
\let\oldsum\sum
\renewcommand{\sum}{\displaystyle\oldsum}
\newlength{\whatsleft}
\newcounter{ProblemNum}
\newcounter{SubProblemNum}[ProblemNum]
\renewcommand{\theSubProblemNum}{\alph{SubProblemNum}}
\renewcommand*{\part}{\stepcounter{SubProblemNum} %
  {\bf (\theSubProblemNum)}\hspace{1mm}}
\newcommand{\row}{\rm{row}}
\newcommand{\nc}{\rm{nc}}
\newcommand{\cb}{\rm{cb}}
\newcommand{\set}[2]{\{ {#1} \hspace{2mm} | \hspace{2mm} {#2} \}}
\newcommand{\cm}[1]{&\qquad &\text{ {#1} }} 
\newcommand{\cmm}[1]{&\quad &\hspace{1.3mm}\begin{minipage}[c]{3in} \raggedright {#1} \end{minipage}}
\newcommand{\fmat}[1]{\[ \begin{array}{#1} } 
\newcommand{\emat}{\end{array} \]} 
\newcommand{\R}{\mathcal{R}}
\newcommand{\M}{\mathcal{M}}
\newcommand{\V}{\mathcal{V}}
\newcommand{\W}{\mathcal{W}}
\newcommand{\N}{\mathcal{N}}
\newcommand{\NN}{\mathbb{N}}
\newcommand{\U}{\mathcal{U}}
\newcommand{\A}{\mathcal{A}}
\newcommand{\T}{\mathcal{T}}
\renewcommand{\L}{\mathcal{L}}
\numberwithin{equation}{section}
\theoremstyle{plain}
\newtheorem{theorem}{Theorem}[section]
\newtheorem{corollary}[theorem]{Corollary}
\newtheorem{proposition}[theorem]{Proposition}
\newtheorem{lemma}[theorem]{Lemma}
\newenvironment{remark}[1][Remark:]{\begin{trivlist}
\item[\hskip \labelsep {\bfseries #1}]}{\end{trivlist}}
\newenvironment{definition}[1][Definition]{\begin{trivlist}
\item[\hskip \labelsep {\bfseries #1}]}{\end{trivlist}}
\DeclareMathSymbol\unlhd{\mathrel}{lasy}{"02}
\newcommand{\thickhline}{%
    \noalign {\ifnum 0=`}\fi \hrule height 1pt
    \futurelet \reserved@a \@xhline
}
\newcolumntype{"}{@{\hskip\tabcolsep\vrule width 1pt\hskip\tabcolsep}}
\begin{document}

\begin{abstract}
In \cite{Verb}, an algebraic construction is used to develop a difference-differential calculus for free noncommutative functions. This paper gives necessary and sufficient conditions for higher order free noncommutative functions to have an antiderivative. 
\end{abstract}

\maketitle

\section*{Introduction}
Noncommutative functions are graded functions between sets of square matrices of all sizes over two vector spaces
that respect direct sums and similarities.
They possess very strong regularity properties (reminiscent
of the regularity properties of usual analytic functions) and admit a good difference-differential calculus.
Noncommutative functions appear naturally in a large variety of settings: noncommutative algebra,
systems and control, spectral theory, and free probability. Starting with pioneering work of J. L. Taylor \cite{Tay72b,Tay73}, the theory was further developed by D.-V. Voiculescu \cite{Voi04,Voi09},
and established itself in recent years as a new and extremely active
research area -- see, e.g., \cite{HKMcC1,HKMcC2,Po06,Po10,MS,AgMcC6}, and the monograph of two of the authors \cite{Verb}. 

The goal of the present paper is to establish a noncommutative analog of the Frobenius integrability theorem.

We begin by providing the necessary background on noncommutative (nc) sets and free nc  functions; we refer the reader to \cite{Verb} for more detail. Let $\R$ be a unital, commutative ring and $\M, \N$ be $\R$-modules. Denote by $\M_{\nc}$ the set of all square matrices of all sizes with entries from $\M$. Then a subset $\Omega$ of $\M_{\nc}$ is called a nc set if it is closed under direct sums, i.e.,
$$X,Y\in\Omega\Longrightarrow X\oplus Y=\begin{bmatrix}
X & 0\\
0 & Y
\end{bmatrix}\in\Omega.$$
 Define 
\begin{enumerate}
\item $\Omega_n$ to be the set of all $n \times n$ matrices in $\Omega$, and

\item $\Omega_{\rm d.s.e.} = \set{X \in \M_{\nc}}{X \oplus Y \in \Omega \text{ for some } Y \in \M_{\nc}}$.
\end{enumerate}
Further, $\Omega$ is called right admissible if for all $X \in \Omega_n, Y \in \Omega_m$ and for all matrices $Z \in \M^{n \times m}$, there exists an invertible $r \in \R$ such that $\fmat{cc} X & rZ \\ 0 & Y \emat \in \Omega_{n + m}$. 

A mapping $f\colon \Omega \to \N_{\nc}$ satisfying $f(\Omega_n) \subseteq \N^{n \times n}$ for all $n \in \NN$ is called a (free) nc function if it respects direct sums and similarities, i.e.,
\begin{alignat*}{2}
f(X \oplus Y) &= f(X) \oplus f(Y) \cm{for all $X \in \Omega_n, Y \in \Omega_m$,} \\
f(SXS^{-1}) &= Sf(X)S^{-1} \cmm{for all $X \in \Omega_n$ and invertible $S \in \R^{n \times n}$ such that $SXS^{-1} \in \Omega$.} \end{alignat*}
On nc sets, this pair of conditions is equivalent to the condition that $f$ respects intertwinings. That is, given $X \in \Omega_n, Y \in \Omega_m$, and $S \in {\R}^{m \times n}$,
$$SX = YS \implies Sf(X) = f(Y)S.$$

When $\Omega$ is a right admissible nc set, a difference-differential operator $\Delta = \Delta_R $ acting on $f$ can be defined by evaluating $f$ on block upper triangular matrices,

$$f\(\fmat{cc} X & Z \\ 0 & Y \emat\) = \fmat{cc} f(X) & \Delta f(X, Y)(Z) \\ 0 & f(Y) \emat.$$
The new function, $\Delta f$, can be extended to a linear function of $Z$ and is shown to have the following properties with respect to direct sums and similarities,
\begin{alignat*}{2}
\Delta f(X^0 \oplus X^1, Y)\(\fmat{c} Z_1 \\ Z_2 \emat\) &= \fmat{c} \Delta f(X^0, Y)(Z_1) \\ \Delta f(X^1, Y)(Z_2) \emat, \\
\Delta f(X, Y^0 \oplus Y^1)(\fmat{cc} Z_1 & Z_2 \emat) &= \fmat{cc} \Delta f(X, Y^0)(Z_1) & \Delta f(X, Y^1)(Z_2) \emat, \\
\Delta f(SXS^{-1}, Y)(SZ) &= S\Delta f(X, Y)(Z), \\
\Delta f(X, SYS^{-1})(ZS^{-1}) &= \Delta f(X, Y)(Z)S^{-1}. \end{alignat*}
Equivalently, $\Delta f$ is said to respect intertwinings,
\begin{alignat*}{2}
SX = WS \implies S\Delta f(X, Y)(Z) &= \Delta f(W, Y)(SZ), \\
SY = WS \implies \Delta f(X, Y)(ZS) &= \Delta f(X, W)(Z)S. \end{alignat*} 

More generally, let $\M_0, \ldots, \M_k, \N_0, \ldots, \N_k$ be $\R$-modules and let $\Omega^{(0)}, \ldots, \Omega^{(k)}$ be nc sets in $\M_{0, \nc}, \ldots, \M_{k, \nc}$, respectively. Then, a function $f$ defined on $\Omega^{(0)} \times \hdots \times \Omega^{(k)}$ with values $k$-linear mappings from
$\N_1^{n_0 \times n_1} \times \hdots \times \N_k^{n_{k - 1}\times n_k}$ to $\N_0^{n_0 \times n_k}$, or equivalently,
$$f(X^0, \ldots, X^k) \in \hom_{\R}(\N_1^{n_0 \times n_1} \otimes \hdots \otimes \N_k^{n_{k - 1} \times n_k}, \N_0^{n_0 \times n_k})$$
for all $(X^0, \ldots, X^k) \in \Omega^{(0)}_{n_0} \times \hdots \times \Omega^{(k)}_{n_k}$ and all $n_0, \ldots, n_k \in \NN$, is called an order $k$ (free) nc function if, given matrices of appropriate sizes, it respects direct sums and similarities, i.e.,
\begin{alignat*}{2}
&\begin{aligned}
&f(X^0_1 \oplus X^0_2, X^1, \ldots, X^k)\(\fmat{c} Z^1_1 \\ Z^1_2 \emat, Z^2, \ldots, Z^k\) = \fmat{c} f(X^0_1, X^1, \ldots, X^k)(Z^1_1, Z^2, \ldots, Z^k) \\ f(X^0_2, X^1, \ldots, X^k)(Z^1_2, Z^2, \ldots, Z^k) \emat, \end{aligned} \\
&\begin{aligned}
&f(X^0, \ldots, X^{j - 1}, X^j_1 \oplus X^j_2, X^{j + 1}, \ldots, X^k)\(Z^1, \ldots, Z^{j - 1}, \fmat{cc} Z^j_1 & Z^j_2 \emat, \fmat{cc} Z^{j + 1}_1 \\ Z^{j + 1}_2 \emat, Z^{j + 2}, \ldots, Z^k\) \\
&\hspace{15mm}= f(X^0, \ldots, X^{j - 1}, X^j_1, X^{j + 1}, \ldots, X^k)(Z^1, \ldots, Z^{j - 1}, Z^j_1, Z^{j + 1}_1, Z^{j + 2}, \ldots, Z^k) \\
&\hspace{15mm}+ f(X^0, \ldots, X^{j - 1}, X^j_2, X^{j + 1}, \ldots, X^k)(Z^1, \ldots, Z^{j - 1}, Z^j_2, Z^{j + 1}_2, Z^{j + 2}, \ldots, Z^k), \end{aligned} \\
&\begin{aligned}
&f(X^0, \ldots, X^{k - 1}, X^k_1 \oplus X^k_2)(Z^1, \ldots, Z^{k - 1}, \fmat{cc} Z^k_1 & Z^k_2 \emat) \\
&\hspace{15mm}= \fmat{cc} f(X^0, \ldots, X^{k - 1}, X^k_1)(Z^1, \ldots, Z^{k - 1}, Z^k_1) & f(X^0, \ldots, X^{k - 1}, X^k_2)(Z^1, \ldots, Z^{k - 1}, Z^k_2) \emat, \end{aligned} \\
&\begin{aligned}
&f(S_0X^0S_0^{-1}, X^1, \ldots, X^k)(S_0Z^1, Z^2, \ldots, Z^k) = S_0f(X^0, \ldots, X^k)(Z^1, \ldots, Z^k), \end{aligned} \\
&\begin{aligned}
&f(X^0, \ldots, X^{j - 1}, S_jX^jS_j^{-1}, X^{j + 1}, \ldots, X^k)(Z^1, \ldots, Z^{j - 1}, Z^jS_j^{-1}, S_jZ^{j + 1}, Z^{j + 2}, \ldots, Z^k) \\
&\hspace{15mm}= f(X^0, \ldots, X^k)(Z^1, \ldots, Z^k), \end{aligned} \\
&\begin{aligned}
&f(X^0, \ldots, X^{k - 1}, S_kX^kS_k^{-1})(Z^1, \ldots, Z^{k - 1}, Z^kS_k^{-1}) = f(X^0, \ldots, X^k)(Z^1, \ldots, Z^k)S_k^{-1}. \end{aligned} \end{alignat*}
Equivalently, for appropriately sized matrices, it respects intertwinings as follows:
\begin{alignat*}{2}
&T_0X^0_1 = X^0_2T_0 \implies T_0f(X^0_1, X^1, \ldots, X^k)(Z^1, \ldots, Z^k) \\
&\hspace{40mm} = f(X^0_2, X^1, \ldots, X^k)(T_0Z^1, Z^2 \ldots, Z^k), \\
&T_jX^j_1 = X^j_2T_j \implies f(X^0, \ldots, X^{j - 1}, X^j_1, X^{j + 1}, \ldots, X^k)(Z^1, \ldots, Z^{j - 1}, Z^jT_j, Z^{j + 1}, \ldots, Z^k) \\
&\hspace{40mm}= f(X^0, \ldots, X^{j - 1}, X^j_2, X^{j + 1}, \ldots, X^k)(Z^1, \ldots, Z^j, T_jZ^{j + 1}, Z^{j + 2}, \ldots, Z^k), \\
&T_kX^k_1 = X^k_2T_k \implies f(X^0, \ldots, X^{k - 1}, X^k_1)(Z^1, \ldots, Z^{k - 1}, Z^kT_k) \\
&\hspace{40mm} = f(X^0, \ldots, X^{k - 1}, X^k_2)(Z^1, \ldots, Z^k)T_k. \end{alignat*}
In this case, we write $f\in\T^k(\Omega^{(0)},\ldots,\Omega^{(k)};\N_{0,\nc},\ldots,\N_{k,\nc})$.

Under this definition, we say our original nc functions are of order $0$.

 The difference-differential operator can be extended to order $k$ nc functions as follows:
\begin{alignat*}{2}
&f\(\fmat{cc} X^0_1 & Z \\ 0 & X^0_2 \emat, X^1, \ldots, X^k\)\(\fmat{c} Z^1_1 \\ Z^1_2 \emat, Z^2, \ldots, Z^k\) \\
&\hspace{10mm}= \fmat{c} f(X^0_1, X^1, \ldots, X^k)(Z^1_1, Z^2, \ldots, Z^k) + {_0\Delta} f(X^0_1, X^0_2, X^1, \ldots, X^k)(Z, Z^1_2, Z^2, \ldots, Z^k) \\ f(X^0_2, X^1, \ldots, X^k)(Z^1_2, Z^2, \ldots, Z^k) \emat, \\
&f\(X^0, \ldots, X^{j - 1}, \fmat{cc} X^j_1 & Z \\ 0 & X^j_2 \emat, X^{j + 1}, \ldots, X^k\)\(Z^1, \ldots, Z^{j - 1}, \fmat{cc} Z^j_1 & Z^j_2 \emat, \fmat{c} Z^{j + 1}_1 \\ Z^{j + 1}_2 \emat, Z^{j + 2}, \ldots, Z^k\) \\
&\hspace{15mm}= f(X^0, \ldots, X^{j - 1}, X^j_1, X^{j + 1}, \ldots, X^k)(Z^1, \ldots, Z^{j - 1}, Z^j_1, Z^{j + 1}_1, Z^{j + 2}, \ldots, Z^k) \\
&\hspace{15mm}+ {_j\Delta} f(X^0, \ldots, X^{j - 1}, X^j_1, X^j_2, X^{j + 1}, \ldots, X^k)(Z^1, \ldots, Z^{j - 1}, Z^j_1, Z, Z^{j + 1}_2, Z^{j + 2}, \ldots, Z^k) \\
&\hspace{15mm}+ f(X^0, \ldots, X^{j - 1}, X^j_2, X^{j + 1}, \ldots, X^k)(Z^1, \ldots, Z^{j - 1}, Z^j_2, Z^{j + 1}_2, Z^{j + 2}, \ldots, Z^k), \\
&f\(X^0, \ldots, X^{k - 1}, \fmat{cc} X^k_1 & Z \\ 0 & X^k_2 \emat\)\(Z^1, \ldots, Z^{k - 1}, \fmat{cc} Z^k_1 & Z^k_2 \emat\) \\ 
&\hspace{15mm}= {\row} \[f(X^0, \ldots, X^{k - 1}, X^k_1)(Z^1, \ldots, Z^{k - 1}, Z^k_1), \right. \\ 
&\hspace{55mm} \left. {_k\Delta} f(X^0, \ldots, X^{k - 1}, X^k_1, X^k_2)(Z^1, \ldots, Z^{k - 1}, Z^k_1, Z) \right. \\
&\hspace{90mm} + \left. f(X^0, \ldots, X^{k - 1}, X^k_2)(Z^1, \ldots, Z^{k - 1}, Z^k_2)\]. 
\end{alignat*}
In each case ${_j\Delta} f$, $j = 0, \ldots, k$, yields an nc function of order $k + 1$, so that
\begin{multline*}
{_j\Delta}\colon\T^k(\Omega^{(0)},\ldots,\Omega^{(k)};\N_{0,\nc},\ldots,\N_{k,\nc})\\
\to\T^k(\Omega^{(0)},\ldots,\Omega^{(j-1)},\Omega^{(j)},\Omega^{(j)},\Omega^{(j+1)},\ldots,\Omega^{(k)};\N_{0,\nc},\ldots,\N_{j,\nc},\M_{j,\nc},\N_{j+1,\nc},\ldots,\N_{k,\nc}).
\end{multline*}

This paper considers the process of undoing the operators ${_j\Delta} $. When $k = 0$, this means we are given a nc function, $F$, of order $1$. It is proved that there exists an nc function, $f$, of order $0$ such that $\Delta f = F$ if and only if ${_0\Delta} F = {_1\Delta} F$. This is done in the following steps.

First, an order $0$ nc function $f$ is defined up to the selection of the value of $f$ at some arbitrary  point $Y \in \Omega_s$. This definition is inspired by formula (2.19) in \cite{Verb}: given $X \in \Omega_{sm}$,
$$f(X) = I_m \otimes f(Y) + \Delta f(I_m \otimes Y, X)(X - (I_m \otimes Y)).$$
It is then shown that this will yield an nc function $f$ for which $\Delta f = F$ if and only if there exists a value $f(Y)$ such that 
\begin{equation} \label{innerderivationproperty}
Tf(Y) - f(Y)T = F(Y, Y)(TY - YT) \end{equation}
for all matrices $T \in \R^{s \times s}$.
Next, we define $D_Y\colon \R^{s \times s}\to \N^{s \times s}$ by $D_Y(S) = F(Y, Y)(SY - YS)$, and show that $D_Y$ is a Lie-algebra derivation:
\begin{alignat}{3}
D_Y(S)T - TD_Y(S) + SD_Y(T) - D_Y(T)S = D_Y(ST - TS). \label{derivation} \end{alignat}

Let $E_{ij}$  be the matrix with 1 in the $i, j$ position and 0 elsewhere and let $F_{ij} := D_Y(E_{ij})$. Then, for some fixed $c \in \N$,
$$f(Y) = \sum_{i = 1}^s \(E_{ii}F_{ii} + E_{i1}F_{1i}E_{ii}\) + cI_s$$
is a value for $f(Y)$ that satisfies equation \eqref{innerderivationproperty}, that is, $D_Y$ is an inner derivation. This is proven by plugging matrices of the form $E_{rs}$ and $E_{uv}$ into \eqref{derivation} for $S$ and $T$. This gives a large set of equalities which provide enough information to show that \eqref{innerderivationproperty} holds for all matrices $T$ of the form $E_{pq}$. It is then a simple matter to linearly extend this result to show that it holds for all matrices $T$. 

For higher order nc functions, we turn to undoing the ${_j\Delta} $ operators. That is, given $k + 1$ nc functions, $F_0, \ldots, F_k$, each of order $k + 1$, it is proved that there exists an nc function, $f$, of order $k$ such that ${_j\Delta} f = F_j$ for $0 \leq j \leq k$ if and only if ${_i\Delta} F_j = {_{j + 1}\Delta} F_i$ for $0 \leq i \leq j \leq k$.

First, an order $k$ nc function $f$ is defined up to a selection of the value of $f$ at some arbitrary  point $(Y^0, \ldots, Y^k) \in 
\Omega^{(0)}_{s_0} \times \hdots \times \Omega^{(k)}_{s_k}$. For $Z^j \in \N_j^{s_{j - 1}m_{j - 1} \times s_jm_j}$ where 
$j = 0, \ldots, k$, $f$ is defined at the ``amplified"  points  $(I_{m_0} \otimes Y^0, \ldots, I_{m_k} \otimes Y^k)$ as
\begin{multline*}
f(I_{m_0} \otimes Y^0, \ldots, I_{m_k} \otimes Y^k)(Z^1, \ldots, Z^k)\\
 = \[ \sum_{\underset{j = 1, \ldots, k - 1}{i_j = 1}}^{m_j} f(Y^0, \ldots, Y^k)(Z^1_{i_0, i_1}, \ldots, Z^k_{i_{k - 1}, i_k}) \]_{\begin{array}{ll}
i_0=1,\ldots,m_0,\\
i_k=1,\ldots,m_k
\end{array}
}.
\end{multline*}
Then, given $X^j \in \Omega^{(j)}_{s_jm_j}$ and $Z^j \in \N_j^{s_{j - 1}m_{j - 1} \times s_jm_j}$ for $j = 0, \ldots, k$,
\begin{alignat*}{2}
f(X^0, &\ldots, X^k)(Z^1, \ldots, Z^k) \\
&= f(I_{m_0} \otimes Y^0, \ldots, I_{m_k} \otimes Y^k)(Z^1, \ldots, Z^k) \\
&+ \sum_{j = 0}^k {_j\Delta} f(I_{m_0} \otimes Y^0, \ldots, I_{m_j} \otimes Y^j, X^j, \ldots, X^k)(Z^1, \ldots, Z^j, X^j - I_{m_j} \otimes Y^j, Z^{j + 1}, \ldots, Z^k). \end{alignat*}
It is then shown that this yields an nc function $f$ such that ${_j\Delta} f = F_j$, $j=0,\ldots,k$, if and only if there exists a value $f(Y^0, \ldots, Y^k)$ which, for appropriately sized matrices $T_0, \ldots, T_k$ over $\R$, satisfies
\begin{alignat}{2}
&\begin{aligned}
&T_0f(Y^0, \ldots, Y^k)(Z^1, \ldots, Z^k) - f(Y^0, \ldots, Y^k)(T_0Z^1, Z^2, \ldots, Z^k) \\
&\hspace{55mm}= F_0(Y^0, Y^0, Y^1, \ldots, Y^k)(T_0Y^0 - Y^0T_0, Z^1, \ldots, Z^k), \\ \end{aligned} \label{one} \\
&\begin{aligned}
&f(Y^0, \ldots, Y^k)(Z^1, \ldots, Z^{j - 1}, Z^jT_j, Z^{j + 1}, \ldots, Z^k) \\
&\hspace{55mm} - f(Y^0, \ldots, Y^k)(Z^1, \ldots, Z^j, T_jZ^{j + 1}, Z^{j + 2}, \ldots, Z^k) \\
&\hspace{15mm}= F_j(Y^0, \ldots, Y^{j - 1}, Y^j, Y^j, Y^{j + 1}, \ldots, Y^k)(Z^1, \ldots, Z^j, T_jY^j - Y^jT_j, Z^{j + 1}, \ldots, Z^k), \\ \end{aligned} \label{two} \\
&\begin{aligned}
&f(Y^0, \ldots, Y^k)(Z^1, \ldots, Z^{k - 1}, Z^kT_k) - f(Y^0, \ldots, Y^k)(Z^1, \ldots, Z^k)T_k \\
&\hspace{55mm}= F_k(Y^0, \ldots, Y^{k - 1}, Y^k, Y^k)(Z^1, \ldots, Z^k, T_kY^k - Y^kT_k). \end{aligned} \label{three} \end{alignat}

We let $${_jD}(S)=({_jD}_Y(S)) = F_j(Y^0, \ldots, Y^{j - 1}, Y^j, Y^j, Y^{j + 1}, \ldots, Y^k)(SY^j - Y^jS)$$ for $j = 0, \ldots, k$; this is viewed as a $k$-linear function on $Z^1, \ldots, Z^k$ by setting
\begin{alignat*}{2}
F_j&(Y^0, \ldots, Y^{j - 1}, Y^j, Y^j, Y^{j + 1}, \ldots, Y^k)(SY^j - Y^jS)(Z^1, \ldots, Z^k) \\
&= F_j(Y^0, \ldots, Y^{j - 1}, Y^j, Y^j, Y^{j + 1}, \ldots, Y^k)(Z^1, \ldots, Z^j, SY^j - Y^jS, Z^{j + 1}, \ldots, Z^k). \end{alignat*}
It is then shown that for each $F_j$, $j = 0, \ldots, k$, there exists a value $f_j(Y^0, \ldots, Y^k)$ that satisfies the corresponding difference formula, \eqref{one}, \eqref{two}, or \eqref{three}, by using
\begin{alignat}{3}\label{higher derivations}
\({_jD}(S)\)T - T\({_jD}(S)\) + S\({_jD}(T)\) - \({_jD}(T)\)S = {_jD}(ST - TS). \end{alignat}
Lastly, utilising the fact that ${_i\Delta} F_j = {_{j + 1}\Delta} F_i$, $i\le j$, it is shown that $f_0(Y^0, \ldots, Y^k)$, \ldots, $f_k(Y^0, \ldots, Y^k)$ can be chosen equal to one another so that we have a single value $f(Y^0, \ldots, Y^k)$ with which to define the function. 

The following is a brief outline of the paper. Section 1 goes through the details of the process of defining an antiderivative for a first order nc function. In Section 2, the details for antidifferentiating sets of higher order nc functions are provided. Finally, Section 3 specializes the integrability results in the following three important cases:
\begin{enumerate} 
\item The modules have the form $\R^d$.
\item The nc functions being integrated are nc polynomials.
\item $\R = \mathbb{C}$, and the nc functions are analytic.
\end{enumerate}

\section{Integrability of First Order NC Functions}

The main theorem of this section is the following.
\begin{theorem}\label{main}
Let $\Omega \subseteq \M_{\rm nc}$ be a right admissible nc set and let $F \in \T^1(\Omega, \Omega; \N_{\nc}, \M_{\nc})$. Then there exists an $f \in \T^0(\Omega; \N_{\nc})$ such that $\Delta f = F$ if and only if ${_0\Delta} F = {_1\Delta} F$.
Furthermore, $f$ is uniquely defined up to a scalar matrix $cI$ for $c \in \N$ in the following way: if $\tilde{f}$ is another nc function such that $\Delta \tilde{f} = F$, then
$$\tilde{f}(X) = f(X) + cI.$$
\end{theorem}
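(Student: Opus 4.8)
The plan is to treat the two implications separately, following the roadmap of the Introduction. Necessity is a short block‑matrix computation; sufficiency requires reconstructing $f$ from its value at a single point and then showing that value can be chosen, and this is where the hypothesis ${_0\Delta} F={_1\Delta} F$ is used. For necessity, assume $\Delta f=F$. Two applications of right admissibility produce, after rescaling the off‑diagonal blocks, a matrix $T=\fmat{ccc}X & Z^1 & W\\ 0 & Y & Z^2\\ 0 & 0 & U\emat\in\Omega$, and I would compute $f(T)$ in two ways: grouping $T$ as the $2\times 2$ block matrix with diagonal entries $\fmat{cc}X & Z^1\\ 0 & Y\emat$ and $U$, and then with diagonal entries $X$ and $\fmat{cc}Y & Z^2\\ 0 & U\emat$. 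Applying the definition of $\Delta$ and the extension formulas for ${_0\Delta}$ and ${_1\Delta}$ on block upper triangular arguments to the order‑$1$ function $\Delta f$, the $(1,3)$ block of $f(T)$ comes out as $F(X,U)(W)+{_0\Delta} F(X,Y,U)(Z^1,Z^2)$ in the first grouping and as $F(X,U)(W)+{_1\Delta} F(X,Y,U)(Z^1,Z^2)$ in the second. Since $f(T)$ is unambiguous and the rescaling factors cancel by linearity, comparing gives ${_0\Delta} F={_1\Delta} F$ in $\T^2(\Omega,\Omega,\Omega;\N_{\nc},\M_{\nc},\M_{\nc})$.

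For sufficiency, assume ${_0\Delta} F={_1\Delta} F$ and fix $Y\in\Omega_s$. Modelling formula (2.19) of \cite{Verb}, I would set, for $X\in\Omega_{sm}$,
$$f(X):=I_m\otimes f(Y)+F\(I_m\otimes Y, X\)\(X-I_m\otimes Y\),$$
with a value $f(Y)\in\N^{s\times s}$ still to be chosen (and $f(I_m\otimes Y)=I_m\otimes f(Y)$ forced by the direct‑sum axiom). The verification splits into: (i) consistency under the ambiguity in writing the size as $sm$, together with compatibility with direct sums; (ii) respect for similarities; and (iii) the identity $\Delta f=F$, obtained by evaluating this $f$ on $\fmat{cc}X & Z\\ 0 & X'\emat$. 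Stages (ii) and (iii) are where ${_0\Delta} F={_1\Delta} F$ enters: the two competing expansions of the relevant corner differ precisely by $({_0\Delta} F-{_1\Delta} F)$, which now vanishes. I would record the outcome as an intermediate proposition: the formula above defines an $f\in\T^0(\Omega;\N_{\nc})$ with $\Delta f=F$ if and only if $f(Y)$ satisfies \eqref{innerderivationproperty}, i.e. $Tf(Y)-f(Y)T=F(Y,Y)(TY-YT)$ for all $T\in\R^{s\times s}$; and, conversely, the value at $Y$ of any nc $f$ with $\Delta f=F$ satisfies \eqref{innerderivationproperty}, by respecting similarities and right admissibility.

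It then remains to produce a value satisfying \eqref{innerderivationproperty}, i.e. to show that the linear map $D_Y\colon\R^{s\times s}\to\N^{s\times s}$, $D_Y(S)=F(Y,Y)(SY-YS)$, is an inner derivation; note this step does not use the hypothesis ${_0\Delta} F={_1\Delta} F$, which is why that hypothesis is exactly the obstruction. First I would derive the Lie‑derivation identity \eqref{derivation} for $D_Y$ from the intertwining properties of the order‑$1$ function $F$ on $2\times 2$ block upper triangular amplifications of $Y$. Then, writing $F_{ij}:=D_Y(E_{ij})$, I would check that
$$W:=\sum_{i=1}^s\(E_{ii}F_{ii}+E_{i1}F_{1i}E_{ii}\)+cI_s$$
satisfies $E_{pq}W-WE_{pq}=F_{pq}$ for all $p,q$, by substituting $S=E_{rp}$ and $T=E_{qu}$ into \eqref{derivation} and reading off the resulting system of relations among the $F_{ij}$; linear extension then gives $TW-WT=D_Y(T)$ for every $T$. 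Taking $f(Y):=W$ and invoking the intermediate proposition produces $f\in\T^0(\Omega;\N_{\nc})$ with $\Delta f=F$, which proves the ``if'' direction.

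For uniqueness, let $\tilde f$ also satisfy $\Delta\tilde f=F$. By formula (2.19) of \cite{Verb}, both $f$ and $\tilde f$ are reconstructed from their values at $Y$ through the displayed formula with the same $F$, so $f(X)-\tilde f(X)=I_m\otimes(f(Y)-\tilde f(Y))$ on $\Omega_{sm}$; since $f(Y)$ and $\tilde f(Y)$ satisfy \eqref{innerderivationproperty} with the same right‑hand side, $M:=f(Y)-\tilde f(Y)$ satisfies $TM-MT=0$ for all $T\in\R^{s\times s}$, and testing against $T=E_{ij}$ forces $M=cI_s$ for some $c\in\N$. Hence $\tilde f(X)=f(X)+cI$ on $\Omega_{sm}$; the same pointwise argument applied at an arbitrary matrix of each nonempty level, together with right admissibility to tie the levels together, gives it on all of $\Omega$ with a single constant $c$. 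The main obstacle I anticipate is the bookkeeping in the sufficiency half rather than anything conceptual: confirming carefully that the reconstruction formula really yields an order‑$0$ nc function (so that ${_0\Delta} F={_1\Delta} F$ is exactly the obstruction and nothing more is needed), and executing the index chase that verifies the explicit $W$ above is an inner element implementing $D_Y$.
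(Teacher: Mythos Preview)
Your overall architecture matches the paper's, but there is a genuine gap in the sufficiency half. You write that producing an $f(Y)$ satisfying \eqref{innerderivationproperty}---i.e.\ showing $D_Y$ is an inner derivation---``does not use the hypothesis ${_0\Delta} F={_1\Delta} F$.'' This is false, and in fact the hypothesis is used \emph{twice} at exactly this step.

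First, $D_Y$ is not a (Lie) derivation just from the intertwining properties of an order-$1$ nc function. The computation (Proposition~\ref{derivationprop} in the paper) gives
\[
S\,D_Y(T)+D_Y(S)\,T-D_Y(ST)={_0\Delta} F(Y,Y,Y)(SY-YS,TY-YT)-{_1\Delta} F(Y,Y,Y)(SY-YS,TY-YT),
\]
so the derivation identity \eqref{derivation} is exactly the statement ${_0\Delta} F(Y,Y,Y)={_1\Delta} F(Y,Y,Y)$. Second, even granting \eqref{derivation}, a Lie-algebra derivation $D\colon\R^{s\times s}\to\N^{s\times s}$ need not be inner: the paper's Theorem~\ref{derivation properties} shows it is inner iff the diagonal entries $D^{ii}_{kk}$ vanish, and that vanishing is obtained by a separate computation (Proposition~\ref{makingzero}, taking $A=B=C=E_{ii}$) which again uses ${_0\Delta} F={_1\Delta} F$ together with the higher commutation ${_1\Delta}{_1\Delta}F={_2\Delta}{_1\Delta}F$. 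Your proposed verification ``substituting $S=E_{rp}$, $T=E_{qu}$ into \eqref{derivation}'' only produces the off-diagonal relations among the $F_{ij}$; it cannot force $(F_{ii})_{ii}=0$, which is what you need to check that your candidate $W$ actually satisfies $E_{pp}W-WE_{pp}=F_{pp}$ at the $(p,p)$ entry.

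So the hypothesis is not merely ``the obstruction'' appearing in the reconstruction formula: it is needed a second time to guarantee that a base value $f(Y)$ exists at all. Once you insert these two uses of ${_0\Delta} F={_1\Delta} F$ into your inner-derivation step, the rest of your outline (necessity via $3\times 3$ blocks, reconstruction via the displayed formula, uniqueness by centrality of $f(Y)-\tilde f(Y)$) aligns with the paper's proof.
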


To prove this, we first demonstrate the following fact about nc sets.
\begin{lemma}\label{single dse}
Let $\Omega \subseteq \M_{\nc}$ be a right admissible nc set. Let $s$ be an integer such that $\Omega_s$ is nonempty. Then
$$\(\bigsqcup_{m = 1}^{\infty} \Omega_{sm}\)_{\rm d.s.e.} = \Omega_{\rm d.s.e.}.$$
\end{lemma}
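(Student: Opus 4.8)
First I would dispatch the easy inclusion $\left(\bigsqcup_{m=1}^\infty \Omega_{sm}\right)_{\rm d.s.e.} \subseteq \Omega_{\rm d.s.e.}$, which is immediate: $\bigsqcup_{m=1}^\infty \Omega_{sm} \subseteq \Omega$, and the operation $\Gamma \mapsto \Gamma_{\rm d.s.e.}$ is monotone under inclusion of nc sets (if $X \oplus Y$ belongs to a sub-nc-set it belongs to the larger one). So essentially all of the content is in the reverse inclusion.

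For the reverse inclusion I would fix $X \in \Omega_{\rm d.s.e.}$, say $X \in \M^{n \times n}$, and pick $Y \in \M^{\ell \times \ell}$ with $\ell \geq 1$ and $X \oplus Y \in \Omega_{n + \ell}$, as furnished by the definition of $\Omega_{\rm d.s.e.}$. The key move is to pass to the $s$-fold direct-sum power: by closure of $\Omega$ under direct sums, $(X \oplus Y)^{\oplus s} = \underbrace{(X\oplus Y)\oplus\cdots\oplus(X\oplus Y)}_{s\text{ times}} \in \Omega_{s(n+\ell)}$, and $s(n+\ell)$ is a positive multiple of $s$, so this matrix lies in $\bigsqcup_{m=1}^\infty \Omega_{sm}$. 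Then I would use associativity of $\oplus$ to rewrite $(X\oplus Y)^{\oplus s} = X \oplus Y'$ with $Y' := Y \oplus (X\oplus Y)^{\oplus(s-1)} \in \M_{\nc}$ (for $s=1$ this is just $Y'=Y$), concluding $X \in \left(\bigsqcup_{m=1}^\infty \Omega_{sm}\right)_{\rm d.s.e.}$.

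The one point I would be careful about — and what I would call the crux, such as it is — is that one is \emph{not} allowed to reshuffle the diagonal blocks of $X \oplus Y$ (e.g. into $Y \oplus X$), since a nc set is only assumed closed under direct sums, not under permutation similarities. The virtue of taking the direct-sum power $(X\oplus Y)^{\oplus s}$ is precisely that it both forces the size into $s\NN$ and keeps $X$ as the upper-left block, so only associativity of $\oplus$ is invoked and no similarity is needed. I would also note in passing that right admissibility is not actually used for this set-theoretic identity, and that the hypothesis $\Omega_s \neq \emptyset$ is there mainly for the sequel — it is what makes $\bigsqcup_{m=1}^\infty \Omega_{sm}$ a nonempty right admissible nc set on which an antiderivative will later be built; when $\Omega$ itself is empty both sides of the claimed identity are trivially empty.
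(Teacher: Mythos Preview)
Your proof is correct and follows essentially the same approach as the paper: take a direct-sum power of $X\oplus Y$ to force the size into a multiple of $s$, then use associativity of $\oplus$ to exhibit $X$ as the upper-left block. The only cosmetic difference is that the paper takes $\mathrm{lcm}(s,t)/t$ copies of $X\oplus Y$ (where $t$ is the size of $X\oplus Y$) rather than your $s$ copies; your choice is if anything simpler, and your remarks about not needing similarities and not actually using right admissibility are accurate and worth keeping.
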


\begin{proof}
It is clear that $\(\bigsqcup_{m = 1}^{\infty} \Omega_{sm}\)_{\rm d.s.e.} \subseteq \Omega_{\rm d.s.e.}$. For the reverse inclusion, let $X \in \Omega_{\rm d.s.e.}$; then $X \oplus Y \in \Omega_t$ for some $Y \in \M_{\nc}$. Let $\ell$ be the least common multiple of $s$ and $t$ and define $X \overline{\oplus} Y = \bigoplus_{i = 1}^{\ell/t} (X \oplus Y) \in \Omega_\ell$. Then $$X \overline{\oplus} Y =
X\oplus\left(Y\oplus  \bigoplus_{i = 1}^{\ell/t -1} (X \oplus Y)\right)
\in \bigsqcup_{m = 1}^{\infty} \Omega_{sm}.$$ 
So, by definition, $X \in \(\bigsqcup_{m = 1}^{\infty} \Omega_{sm}\)_{\rm d.s.e.}.$
\end{proof}

The following theorem is the first major step in the proof of our main result.
\begin{theorem}\label{almost result}
Let $\Omega \subseteq \M_{\nc}$ be a right admissible nc set, let $F \in \T^1(\Omega, \Omega; \N_{\nc}, \M_{\nc})$, and let $Y \in \Omega_s$. If ${_0\Delta} F = {_1\Delta} F$ and there exists an $f_0 \in \N^{s \times s}$ such that 
\begin{equation}\label{innerder}
F(Y, Y)(SY - YS) = Sf_0 - f_0S \end{equation}
for all $S \in \R^{s \times s}$, then there exists a unique $f \in \T^0(\Omega; \N_{\nc})$ such that $\Delta f = F$ and $f(Y) = f_0$. Furthermore, for $X \in \Omega_{sm}$, 
\begin{equation}\label{definition}
f(X) = I_m \otimes f_0 + F(I_m \otimes Y,X)(X - I_m \otimes Y). \end{equation}
\end{theorem}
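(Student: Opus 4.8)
The plan is to build $f$ first on the subset $\Gamma := \bigsqcup_{m=1}^{\infty}\Omega_{sm}$ by the formula \eqref{definition}, verify there that $f$ is an nc function with $\Delta f = F$ and $f(Y) = f_0$, and then extend. For the extension, Lemma~\ref{single dse} gives $\Gamma \subseteq \Omega \subseteq \Gamma_{\rm d.s.e.}$, and an nc function on $\Gamma$ extends uniquely to an nc function on $\Gamma_{\rm d.s.e.}$ (a standard fact, see \cite{Verb}); restricting it back to $\Omega$ gives the required $f$ --- it agrees with \eqref{definition} on $\Omega_{sm} = \Gamma_{sm}$, and $\Delta f = F$ holds on all of $\Omega$ because both sides are nc functions that agree on $\Gamma$. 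Uniqueness subject to $f(Y) = f_0$ then follows, since the difference of two solutions is an nc function with $\Delta$ identically zero vanishing at $Y$, which \eqref{definition} (applied with $F = 0$) forces to be $0$. Throughout I use $I_m\otimes Y = Y\oplus\cdots\oplus Y\in\Gamma_{sm}$ and $I_{m+m'}\otimes Y = (I_m\otimes Y)\oplus(I_{m'}\otimes Y)$.

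That $f$, as defined on $\Gamma$ by \eqref{definition}, respects direct sums is immediate: $X\oplus X' - I_{m+m'}\otimes Y$ is block diagonal, so splitting both arguments of $F$ as direct sums and using the direct-sum properties of $F$ annihilates the off-diagonal contributions and leaves $f(X)\oplus f(X')$; also $f(Y) = f_0$ is \eqref{definition} with $m = 1$. The first real step is to check that $\Delta f = F$ on $\Gamma$. Evaluating \eqref{definition} on a block upper triangular matrix $\begin{bmatrix} X & Z \\ 0 & X'\end{bmatrix} \in \Gamma$, splitting the first argument of $F$ as $(I_m\otimes Y)\oplus(I_{m'}\otimes Y)$ and the second as a block upper triangular matrix, produces a block upper triangular value with the correct diagonal and $(1,2)$-block
\begin{equation*}
\Delta f(X, X')(Z) = {_1\Delta}F(I_m\otimes Y, X, X')(X - I_m\otimes Y, Z) + F(I_m\otimes Y, X')(Z).
\end{equation*}
On the other hand, right admissibility supplies an invertible $r \in \R$ with $\begin{bmatrix} I_m\otimes Y & r(X - I_m\otimes Y) \\ 0 & X\end{bmatrix} \in \Omega$, and this matrix is similar to $(I_m\otimes Y)\oplus X$; feeding it into the definition of ${_0\Delta}F$ and comparing the result by means of the intertwining and direct-sum properties of $F$ yields the identity
\begin{equation*}
{_0\Delta}F(I_m\otimes Y, X, X')(X - I_m\otimes Y, Z) = F(X, X')(Z) - F(I_m\otimes Y, X')(Z),
\end{equation*}
which needs neither hypothesis. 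The two displays together with the assumption ${_0\Delta}F = {_1\Delta}F$ give $\Delta f(X, X')(Z) = F(X, X')(Z)$.

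It remains to show that $f$ respects intertwinings on $\Gamma$, equivalently direct sums and similarities, and this is where \eqref{innerder} enters and where the main work lies. I would route this through a Sylvester-type identity
\begin{equation*}
F(X', X'')(P X'' - X' P) = P f(X'') - f(X') P
\end{equation*}
valid for all $X' \in \Gamma_{sm_1}$, $X'' \in \Gamma_{sm_2}$, $P \in \R^{sm_1 \times sm_2}$. Granting it, setting $P := T$ whenever $TX = X'T$ makes the left-hand side equal $F(X', X)(0) = 0$, so $T f(X) = f(X') T$ and $f$ respects intertwinings. To establish the Sylvester identity, note first that \eqref{innerder} amplifies: decomposing $F(I_{m_1}\otimes Y, I_{m_2}\otimes Y)$ block-entrywise into copies of $F(Y, Y)$ and applying \eqref{innerder} blockwise gives
\begin{equation*}
F(I_{m_1}\otimes Y, I_{m_2}\otimes Y)\bigl(P(I_{m_2}\otimes Y) - (I_{m_1}\otimes Y)P\bigr) = P(I_{m_2}\otimes f_0) - (I_{m_1}\otimes f_0)P,
\end{equation*}
which, since $f(I_{m_i}\otimes Y) = I_{m_i}\otimes f_0$, is exactly the Sylvester identity at the amplified base points. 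One then reduces a general pair $(X', X'')$ to this case: using in each slot the hypothesis-free identity from the previous paragraph together with its mirror image in the second argument, one writes $F(X', X'')(Z)$ as $F(B_1, B_2)(Z)$ plus explicit ${_0\Delta}F$- and ${_1\Delta}F$-terms (with $B_i := I_{m_i}\otimes Y$), substitutes $Z = P X'' - X' P$, and checks that the extra terms reassemble into $P f(X'') - f(X') P$ upon invoking ${_0\Delta}F = {_1\Delta}F$ once more. This manipulation of the correction terms is the crux; the remaining ingredients, namely the direct-sum check and the extension from $\Gamma$ to $\Omega$, are routine.
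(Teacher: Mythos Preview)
Your proposal is correct and follows essentially the same route as the paper: amplify \eqref{innerder}, establish the Sylvester-type identity $F(X',X'')(PX''-X'P)=Pf(X'')-f(X')P$ by reducing via the difference formulas to the amplified base points using ${_0\Delta}F={_1\Delta}F$, conclude that $f$ is nc on $\Gamma$, verify $\Delta f=F$ there, and extend via Lemma~\ref{single dse}. The only organizational difference is that you read off $\Delta f=F$ directly from the block-triangular evaluation of \eqref{definition} before proving the Sylvester identity, whereas the paper first proves the Sylvester identity and then deduces $\Delta f=F$ from it by a clever choice of $S$ and $W$; both are valid and the underlying computation is the same.
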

\begin{proof}
Suppose that \eqref{innerder} holds for some $f_0 \in \N^{s \times s}$. Given $R \in \R^{sp \times sm}$, we note that \eqref{innerder} can be generalized using the direct sum property of $F$ to give the following:
$$F(I_p \otimes Y, I_m \otimes Y)(R(I_m \otimes Y) - (I_p \otimes Y)R) = R(I_m \otimes f_0) - (I_p \otimes f_0)R.$$
Secondly, it will be shown that for $X\in\Omega_{sm}$, $W \in \Omega_{sp}$, $R \in \R^{sp \times sm}$, and for $f(X)$, $f(W)$ defined as in \eqref{definition}, 
\begin{equation}\label{intertwine}
Rf(X) - f(W)R = F(W, X)(RX - WR).
\end{equation}
Indeed, using some of the difference formulas (3.40)--(3.46)  from \cite{Verb}, we obtain 
\begin{multline*}
Rf(X) - f(W)R = R(I_m \otimes f_0) +RF(I_m\otimes Y,X)(X-I_m\otimes Y)\\
- (I_p \otimes f_0)R - F(I_p \otimes Y,W)(W - I_p \otimes Y)R \\
= R(I_m \otimes f_0) - (I_p \otimes f_0)R\\
+RF(I_m\otimes Y,X)(X-I_m\otimes Y)- F(I_p \otimes Y,W)(W - I_p \otimes Y)R \\
=F(I_p\otimes Y,I_m\otimes Y)(R(I_m\otimes Y)-(I_p\otimes Y)R)\\
+F(W,X)((RX-R(I_m\otimes Y))+{_0}\Delta F(W,I_m\otimes Y,X)(R(I_m\otimes Y)-WR,X-I_m\otimes Y)\\
-F(I_p\otimes Y,I_m\otimes Y)(WR-(I_p\otimes Y)R)+{_1}\Delta F(I_p\otimes Y,W,I_m\otimes Y)(W-I_p\otimes Y,R(I_m\otimes Y)-WR)\\
=F(I_p\otimes Y,I_m\otimes Y)(R(I_m\otimes Y)-WR)+{_1}\Delta F(I_p\otimes Y,W,I_m\otimes Y)(W-I_p\otimes Y,R(I_m\otimes Y)-WR)\\
+{_0}\Delta F(W,I_m\otimes Y,X)(R(I_m\otimes Y)-WR,X-I_m\otimes Y)+F(W,X)((RX-R(I_m\otimes Y))\\
=F(I_p\otimes Y,I_m\otimes Y)(R(I_m\otimes Y)-WR)+{_0}\Delta F(I_p\otimes Y,W,I_m\otimes Y)(W-I_p\otimes Y,R(I_m\otimes Y)-WR)\\
+{_1}\Delta F(W,I_m\otimes Y,X)(R(I_m\otimes Y)-WR,X-I_m\otimes Y)+F(W,X)(RX-R(I_m\otimes Y))\\
=F(W,I_m\otimes Y)(R(I_m\otimes Y)-WR)+{_1}\Delta F(W,I_m\otimes Y,X)(R(I_m\otimes Y)-WR,X-I_m\otimes Y)\\
+F(W,X)(RX-R(I_m\otimes Y))\\
=F(W,X)(R(I_m\otimes Y)-WR)+F(W,X)((RX-R(I_m\otimes Y))\\
=F(W,X)(RX-WR).
 \end{multline*}

With this equality, it is clear now that $f$, as defined in \eqref{definition}, is an nc function on $\Omega' = \bigsqcup_{m = 1}^{\infty} \Omega_{sm}$ satisfying $f(Y)=f_0$. 

By Proposition 9.2 in \cite{Verb} and Lemma \ref{single dse}, it follows that $f$ can be uniquely extended to an nc function on $\Omega_{\rm d.s.e.}$. Thus, it is clear that the restriction of $f$ to $\Omega$ will also be an nc function.

Next, to see that $\Delta f = F$ on $\Omega'$, it will be shown that for $U \in \Omega_{sn}, V \in \Omega_{sq}, Z \in \R^{sn \times sq}$, $\Delta f(U, V)(Z) = F(U, V)(Z)$.

Let $W = \fmat{cc} U & Z \\ 0 & V \emat \in \Omega_{sp}$ where $p = n + q$. Let  $S \in \R^{sp \times sp}$ have the form 
$S = \fmat{cc} I_{sn} & 0 \\ 0 & 0 \emat.$
Thus
\begin{alignat*}{2}
Sf(W) - &f(I_p \otimes Y)S \\
&= \fmat{cc} I_{sn} & 0 \\ 0 & 0 \emat \fmat{cc} f(U) & \Delta f(U, V)(Z) \\ 0 & f(V) \emat - \fmat{cc} I_n \otimes f(Y) & 0 \\ 0 & I_q \otimes f(Y) \emat \fmat{cc} I_{sn} & 0 \\ 0 & 0 \emat \\
&= \fmat{cc} f(U) - I_n \otimes f(Y) & \Delta f(U, V)(Z) \\ 0 & 0 \emat. \end{alignat*}

At the same time,
\begin{alignat*}{2}
SW - (I_p \otimes Y)S &= \fmat{cc} I_{sn} & 0 \\ 0 & 0 \emat \fmat{cc} U & Z \\ 0 & V \emat - \fmat{cc} I_n \otimes Y & 0 \\ 0 & I_q \otimes Y \emat \fmat{cc} I_{sn} & 0 \\ 0 & 0 \emat \\
&= \fmat{cc} U - I_n \otimes Y & Z \\ 0 & 0 \emat. \end{alignat*}
As was proven above, $Sf(W) - f(I_p \otimes Y)S = F(I_p \otimes Y, W)(SW - (I_p \otimes Y)S)$. Hence,
\begin{alignat*}{2}
\fmat{cc} f(U) - I_n \otimes f(Y) & \Delta f(U, V)(Z) \\ 0 & 0 \emat &= F\left(I_p \otimes Y, \fmat{cc} U & Z \\ 0 & V \emat\right)\(\fmat{cc} U - I_n \otimes Y & Z \\ 0 & 0 \emat\). \end{alignat*}
Focusing only on the upper right hand entries gives us the following,
\begin{alignat*}{2}
\Delta f(U, V)(Z) &= {_1\Delta} F(I_n \otimes Y, U, V)(U - I_n \otimes Y, Z) + F(I_n \otimes Y, V)(Z) \\
\Delta f(U, V)(Z) &= F(U, V)(Z), \end{alignat*}
where the final step used the difference formula, equation (3.45) in \cite{Verb}, and the fact that ${_0\Delta} F = {_1\Delta} F$. Hence, $\Delta f(U, V)(Z) = F(U, V)(Z)$ as desired.

Now the functions $f$ and $F$ both have unique extensions to the set $\Omega_{\rm d.s.e.}$ by {\cite[Propositions 9.2, 9.3]{Verb}}. Suppose by contradiction that for some $U \in \Omega_r, V \in \Omega_t$ and $Z \in \R^{r \times t}$, $\Delta f(U, V)(Z) \ne F(U, V)(Z)$. For some integers $p$ and $q$, $U' = I_p \otimes U$ and $V' = I_q \otimes V$ are in $\Omega'$. Given $E$ as a $p \times q$ matrix of all ones, we would have that
$$\Delta f(U', V')(E \otimes Z) \ne F(U', V')(E \otimes Z).$$
But this is not the case. Hence $\Delta f = F$ on the entire set $\Omega_{\rm d.s.e.}$. Thus, the result clearly holds on the smaller set $\Omega$.

Finally, uniqueness of the nc function $f$ with the desired properties is a consequence of the fact that $f$ must satisfy \eqref{definition}.
\end{proof}

To prove Theorem \ref{main} it needs to be shown that a matrix $f_0$ will always exist. To do this some additional definitions and facts are needed.

\begin{definition}
\begin{enumerate} 
\item Let $E_{\alpha \beta}$ be the matrix with a $1$ in position $\alpha, \beta$ and zeros elsewhere.

\item Given a matrix $E_{\alpha\beta}$ and a function $F \in \T^1(\Omega, \Omega; \N_{\nc}, \M_{\nc})$, let $$F_{\alpha \beta}(Y) = F(Y, Y)(E_{\alpha \beta}Y - YE_{\alpha \beta}).$$
If the $Y$ is understood, this may be shortened to $F_{\alpha \beta}$.

\item A derivation of an algebra $\A$ is a linear map, $D\colon \A \to \N$, from $\A$ into a bimodule $\N$ over $\A$ that respects the Leibniz rule. That is, given $S, T \in \A$,
$$S D(T)+ D(S) T = D(ST).$$

\item A derivation of a Lie algebra $\A$ is a linear map, $D\colon \A \to \N$, from $\A$ into a bimodule $\N$ over $\A$ that respects the Leibniz rule. That is, given $S, T \in \A$,
$$[S, D(T)] + [D(S), T] = D([S, T]).$$

\item A derivation of a Lie algebra $\A$ is called inner if there exists an $N \in \N$ such that, for all $S \in \A$,
$$D(S) = [S,N].$$
\end{enumerate}
\end{definition}

Clearly, an algebra $\mathcal{A}$ gives rise to a Lie algebra $(\mathcal{A},[\cdot,\cdot])$ with the Lie bracket defined by $[S,T]=ST-TS$, and a derivation $D\colon \A\to\N$ is also a Lie-algebra derivation.

It turns out that we can restate some of our assumptions in terms of derivations to simplify the proof of the main theorem.
\begin{proposition}\label{derivationprop}
Let $\Omega \subseteq \M_{\nc}$ be a right admissible nc set, let $F \in \T^1(\Omega, \Omega; \N_{\nc}, \M_{\nc})$, let $Y \in \Omega_s$, and suppose that $${_0\Delta} F(Y,Y,Y) = {_1\Delta} F(Y,Y,Y).$$
Then the map
\begin{alignat*}{2}
D_Y &: \R^{s \times s} \to \N^{s \times s},
 \\
D_Y(S) &= F(Y, Y)(SY - YS) \end{alignat*}
is a derivation of the algebra $\R^{s \times s}$ with values in the bimodule $\N^{s \times s}$, and hence a Lie-algebra derivation.
\end{proposition}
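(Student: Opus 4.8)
The plan is to verify directly that $D_Y$ is $\R$-linear and satisfies the Leibniz identity $D_Y(ST) = S\,D_Y(T) + D_Y(S)\,T$ for all $S, T \in \R^{s\times s}$; $\R$-linearity is immediate since $F(Y,Y)(\cdot)$ is $\R$-linear and $S \mapsto SY - YS$ is $\R$-linear, and once the Leibniz identity is established, the assertion that $D_Y$ is a Lie-algebra derivation follows from the general remark recorded just before the proposition.

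The crux is a pair of ``transformation rules'' for the map $A \mapsto F(Y,Y)(A)$, $\M^{s\times s}\to\N^{s\times s}$, measuring its failure to be an $\R^{s\times s}$-bimodule map: for all $S, T\in\R^{s\times s}$ and $Z\in\M^{s\times s}$,
\begin{align*}
F(Y,Y)(SZ) &= S\,F(Y,Y)(Z) + {_0\Delta} F(Y,Y,Y)(YS - SY,\, Z), \\
F(Y,Y)(ZT) &= F(Y,Y)(Z)\,T - {_1\Delta} F(Y,Y,Y)(Z,\, YT - TY).
\end{align*}
To obtain the first, I would apply the intertwining property of $F$ in its first argument, taking $X^0_1$ to be the $2s\times 2s$ block upper triangular matrix with both diagonal blocks equal to $Y$ and off-diagonal block $YS - SY$, taking $X^0_2 = X^1 = Y$, and using the intertwiner $\begin{bmatrix} I_s & S\end{bmatrix}$ on the left; indeed $\begin{bmatrix} I_s & S\end{bmatrix}X^0_1 = Y\begin{bmatrix} I_s & S\end{bmatrix}$, so the intertwining relation, combined with the block formula that defines ${_0\Delta} F$ and the $\R$-linearity of $F(Y,Y)(\cdot)$, gives the first identity after comparing the two sides. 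The block matrix here lies in $\Omega$ after rescaling its off-diagonal block by the invertible scalar supplied by right admissibility; that scalar is carried through and divided out at the end. The second identity is obtained symmetrically, using the intertwining of $F$ in its last argument with $X^1_1$ the block upper triangular matrix having diagonal blocks $Y$ and off-diagonal block $YT - TY$, $X^1_2 = X^0 = Y$, and the intertwiner $\begin{bmatrix} I_s & T\end{bmatrix}$ on the right, together with the block formula that defines ${_1\Delta} F$.

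Granting these, the Leibniz identity follows from the elementary matrix identity $STY - YST = S(TY - YT) + (SY - YS)T$. Applying $F(Y,Y)(\cdot)$, splitting by linearity, and using the first transformation rule with $Z = TY - YT$ and the second with $Z = SY - YS$ (and recalling $F(Y,Y)(TY - YT) = D_Y(T)$, $F(Y,Y)(SY - YS) = D_Y(S)$), I would get
\begin{align*}
D_Y(ST) &= F(Y,Y)(STY - YST) \\
&= S\,D_Y(T) + D_Y(S)\,T + {_0\Delta} F(Y,Y,Y)(YS - SY,\, TY - YT) - {_1\Delta} F(Y,Y,Y)(SY - YS,\, YT - TY).
\end{align*}
Flipping the signs in both slots of the first of the two trailing terms (bilinearity of ${_0\Delta} F(Y,Y,Y)$) rewrites it as ${_0\Delta} F(Y,Y,Y)(SY - YS, YT - TY)$, and then the hypothesis ${_0\Delta} F(Y,Y,Y) = {_1\Delta} F(Y,Y,Y)$ cancels the two terms, leaving $D_Y(ST) = S\,D_Y(T) + D_Y(S)\,T$ as required.

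The main obstacle I expect is bookkeeping rather than ideas: one must keep the order of the arguments and the signs in the two ${}_j\Delta F$ correction terms exactly straight, so that after the sign flip they are visibly of the form on which the hypothesis identifies ${_0\Delta} F(Y,Y,Y)$ with ${_1\Delta} F(Y,Y,Y)$; and one must check carefully that the block upper triangular matrices used to extract the transformation rules can be placed inside $\Omega$, which is exactly the point where right admissibility (as opposed to mere closure under direct sums) is used. Conceptually the statement is transparent: $F(Y,Y)(\cdot)$ is a bimodule map up to the ${}_j\Delta F$ corrections, the map $S\mapsto SY - YS$ is an honest algebra derivation $\R^{s\times s}\to\M^{s\times s}$, and the hypothesis says the two corrections coincide at $Y$, which is precisely what makes the composition again a derivation.
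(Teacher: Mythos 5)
Your proof is correct and follows essentially the same route as the paper: both reduce the Leibniz identity $D_Y(ST)=SD_Y(T)+D_Y(S)T$ to the two first-order difference formulas for $F(Y,Y)(SZ)$ and $F(Y,Y)(ZT)$, via the elementary identity $STY-YST=S(TY-YT)+(SY-YS)T$, and then cancel the resulting ${_0\Delta}F(Y,Y,Y)$ and ${_1\Delta}F(Y,Y,Y)$ correction terms using the hypothesis. The only (harmless) difference is that you rederive those difference formulas directly from the intertwining property together with right admissibility, whereas the paper simply invokes them as formulas (3.40)--(3.46) of \cite{Verb}.
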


\begin{proof}
It needs to be shown that
$$SD_Y(T)+ D_Y(S)T = D_Y(ST).$$
This is done as follows.
\begin{multline*}
 SD_Y(T) + D_Y(S)T
= SF(Y, Y)(TY - YT) 
+F(Y, Y)(SY - YS)T   \\
=\Big(SF(Y, Y)(TY - YT) - F(Y, Y)(S(TY - YT))\Big)\\
 - \Big(F(Y, Y)((SY - YS)T) - F(Y, Y)(SY-YS)T\Big)  
 +F(Y, Y)(STY - YST)\\
={_0{\Delta }}F(Y, Y, Y)(SY - YS, TY - YT)- {_1{\Delta }}F(Y, Y, Y)( SY - YS,TY-YT) 
+F(Y, Y)(STY-UST)\\
= F(Y, Y)(STY-YST) 
= D_Y(ST). \end{multline*}
\end{proof}

The following is a general result about derivations of Lie algebras.
\begin{theorem}\label{derivation properties}
Let $D\colon \R^{s \times s} \to \N^{s \times s}$ be a Lie algebra derivation and let $D^{i j} = D(E_{i j})$. Then $D^{i i}_{kk} = D^{i i}_{\ell\ell}$ for all $i, k, \ell = 1, \ldots, s$. Further, $D$ is inner, i.e. $D(S) = SN - NS$ for some $N\in\N^{s \times s}$, if and only if $D^{i i}_{k k} = 0$ for all $i, k = 1, \ldots, s$, and then 
\begin{equation}\label{definition inner derivation}
N = \sum_{i = 1}^n (E_{ii}D^{ii} + E_{i1}D^{1i}E_{ii}) + cI_s \end{equation}
for some $c \in \N$.
\end{theorem}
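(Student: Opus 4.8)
The plan is to feed pairs of matrix units into the Leibniz identity for a Lie-algebra derivation, $[S,D(T)]+[D(S),T]=D([S,T])$, and to read everything off at the level of entries. Using $E_{ab}E_{cd}=\delta_{bc}E_{ad}$, $(E_{ab}M)_{k\ell}=\delta_{ka}M_{b\ell}$, $(ME_{ab})_{k\ell}=\delta_{\ell b}M_{ka}$, and $[E_{pq},E_{uv}]=\delta_{qu}E_{pv}-\delta_{vp}E_{uq}$, the $(a,b)$ entry of the identity with $S=E_{pq}$, $T=E_{uv}$ becomes, for all indices, the scalar ``master relation''
\begin{equation*}
\delta_{ap}D^{uv}_{qb}-\delta_{bq}D^{uv}_{ap}+\delta_{bv}D^{pq}_{au}-\delta_{au}D^{pq}_{vb}=\delta_{qu}D^{pv}_{ab}-\delta_{vp}D^{uq}_{ab},
\end{equation*}
and every assertion of the theorem is a consequence of suitable specializations of the indices. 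For the first assertion, take $S=E_{pp}$, $T=E_{kp}$ with $k\neq p$ (so $[E_{pp},E_{kp}]=-E_{kp}$) and read the $(k,p)$ entry: all Kronecker deltas collapse and one is left with $D^{pp}_{kk}=D^{pp}_{pp}$; as $k$ is arbitrary this gives $D^{ii}_{kk}=D^{ii}_{\ell\ell}$ for all $i,k,\ell$.

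The necessity of the innerness criterion is immediate: if $D(S)=SN-NS$ then $D^{ij}=E_{ij}N-NE_{ij}$, so $D^{ij}_{k\ell}=\delta_{ki}N_{j\ell}-\delta_{\ell j}N_{ki}$, and taking $j=i$, $\ell=k$ gives $D^{ii}_{kk}=0$. For sufficiency, assume $D^{ii}_{kk}=0$ for all $i,k$ and put $N_0=\sum_{i=1}^{s}(E_{ii}D^{ii}+E_{i1}D^{1i}E_{ii})$; a short computation gives $(N_0)_{ab}=D^{aa}_{ab}$ for $a\neq b$ and $(N_0)_{aa}=D^{1a}_{1a}$ (using $D^{aa}_{aa}=0$). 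It remains to verify $D^{pq}=E_{pq}N_0-N_0E_{pq}$, i.e. $D^{pq}_{k\ell}=\delta_{kp}(N_0)_{q\ell}-\delta_{\ell q}(N_0)_{kp}$ for all $p,q,k,\ell$. Splitting on whether $k=p$ and whether $\ell=q$, this reduces to four families of identities on the entries of $D$, each obtained by plugging one explicit pair of matrix units into the master relation and reading one entry (sometimes after first establishing a base case of the same identity, e.g. $D^{pk}_{kk}=-D^{kk}_{kp}$ from $S=E_{pk}$, $T=E_{kk}$ read at entry $(k,k)$): the vanishing $D^{pq}_{k\ell}=0$ for $k\neq p$ and $\ell\neq q$; the row identity $D^{pq}_{p\ell}=D^{qq}_{q\ell}$ for $\ell\neq q$; the column identity $D^{pq}_{kq}=-D^{kk}_{kp}$ for $k\neq p$; and the corner identity $D^{pq}_{pq}=D^{1q}_{1q}-D^{1p}_{1p}$ for $p\neq q$ (the remaining sub-case $k=p=q=\ell$ is just $D^{pp}_{pp}=0$, true by hypothesis). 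Extending $D(E_{pq})=E_{pq}N_0-N_0E_{pq}$ by $\R$-linearity in the argument yields $D(S)=SN_0-N_0S$ for all $S\in\R^{s\times s}$. Finally, if also $D(S)=SN-NS$ then $S(N-N_0)=(N-N_0)S$ for every $S$, and since the only elements of $\N^{s\times s}$ commuting with all $E_{ij}$ are the scalar matrices $cI_s$, $c\in\N$ (a one-line entry computation), we get $N=N_0+cI_s$, which is both \eqref{definition inner derivation} and the asserted uniqueness.

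The main difficulty I expect is organizational rather than conceptual: each of the four entry identities above fragments into several degenerate sub-cases according to coincidences among $p,q,k,\ell$ and the distinguished index $1$, and one must choose the substitutions so that a single entry is isolated exactly — naive choices tend to give relations such as $2D^{pq}_{q\ell}=0$, useless over a general commutative ring — which in turn forces separate, direct treatment of the small cases $s=1$ and $s=2$ (and occasionally $s=3$), where no index is available to spare. Conceptually the result echoes the classical picture for matrix algebras over a field, where a Lie derivation splits as an inner associative derivation plus a central-valued map annihilating commutators, so that the only obstruction to innerness is carried by $D(I_s)=\sum_i D^{ii}$; over an arbitrary commutative ring that structure theory is unavailable, which is why the explicit matrix-unit bookkeeping above is needed.
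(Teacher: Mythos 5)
Your proposal follows essentially the same route as the paper's proof: your entrywise ``master relation'' is exactly the paper's preparatory Lemma (equation \eqref{Zero Structure}), your four families of entry identities (vanishing, row, column, corner) are precisely the paper's items (1)--(4) in the verification that $E_{pq}N_0-N_0E_{pq}=D^{pq}$, and your description of $(N_0)_{ab}$ together with the uniqueness up to a scalar matrix reproduces \eqref{definition inner derivation}.

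The one substantive point where you diverge is the small-$s$ caveat you raise at the end, and there your proposed remedy cannot work as stated. For matrix size $2$ the theorem is actually \emph{false} for a bare Lie-algebra derivation over a ring in which $2$ is not invertible: over $\R=\N=\mathbb{F}_2$ the linear map with $D(E_{11})=D(E_{22})=0$, $D(E_{12})=E_{21}$, $D(E_{21})=E_{12}$ satisfies the Lie Leibniz identity and has all $D^{ii}_{kk}=0$, yet it is not inner, since every inner derivation has $[E_{12},N]_{21}=0$ while here $D(E_{12})_{21}=1$. So no ``separate, direct treatment'' of the $2\times 2$ case can close the argument from the Lie identity alone; the problematic entry $D^{pq}_{qp}$ genuinely requires either a spare third index (the paper's own step (4)(c) silently chooses $v\ne p,q$ and hence assumes the size is at least $3$) or the stronger associative Leibniz rule, under which $0=D(E_{pq}E_{pq})=E_{pq}D^{pq}+D^{pq}E_{pq}$ read at entry $(p,p)$ yields $D^{pq}_{qp}=0$ for every size. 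This defect is shared with, and indeed inherited from, the paper; in the application it is harmless because $D_Y$ is an associative derivation by Proposition \ref{derivationprop} (equivalently, one can invoke Proposition \ref{makingzero} with $A=E_{qq}$, $B=E_{pq}$, $C=E_{pp}$, $\lambda=0$). But if you want the statement as a free-standing theorem about Lie-algebra derivations, you must either assume the size is at least $3$ (or that $2$ is invertible in $\R$), or strengthen the hypothesis to an associative derivation.
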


We will need the following lemma that gives the property of derivations of the Lie algebra $\R^{s\times s}$ in terms of the basis elements.
\begin{lemma} 
Let $D\colon \R^{s \times s} \to \N^{s \times s}$ be a Lie algebra derivation as in the formulation of Theorem \ref{derivation properties}. Then
\begin{alignat}{2}\label{Zero Structure}
E_{rs}D^{uv} - D^{uv}E_{rs} + D^{rs}E_{uv} - E_{uv}D^{rs} &= \left\{ \begin{array}{ccc} 0 & r \ne v & s \ne u \\ D^{rv} & r \ne v & s = u \\ - D^{us} & r = v & s \ne u \\ D^{rv} - D^{us} & r = v & s = u \end{array} \right. . \end{alignat}
\end{lemma}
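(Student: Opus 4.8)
The plan is to compute $E_{rs}D^{uv} - D^{uv}E_{rs} + D^{rs}E_{uv} - E_{uv}D^{rs}$ directly by exploiting bilinearity of the Lie-derivation identity in its two slots. The key observation is that the left-hand side is precisely the polarization of the quadratic expression $[E_{rs}, D(E_{uv})] + [D(E_{rs}), E_{uv}]$ in the single variable $E_{rs}+E_{uv}$; alternatively and more simply, I would start from the Lie-derivation identity applied to the pair $(E_{rs}, E_{uv})$, namely
\begin{equation*}
[E_{rs}, D(E_{uv})] + [D(E_{rs}), E_{uv}] = D([E_{rs}, E_{uv}]),
\end{equation*}
and reduce the right-hand side using the well-known multiplication rule for matrix units, $E_{rs}E_{uv} = \delta_{su}E_{rv}$. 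Thus $[E_{rs}, E_{uv}] = E_{rs}E_{uv} - E_{uv}E_{rs} = \delta_{su}E_{rv} - \delta_{vr}E_{us}$, and applying the linear map $D$ gives $D([E_{rs},E_{uv}]) = \delta_{su}D^{rv} - \delta_{vr}D^{us}$. Splitting into the four cases according to whether $s=u$ and whether $r=v$ holds yields exactly the four values $0$, $D^{rv}$, $-D^{us}$, $D^{rv}-D^{us}$ displayed in \eqref{Zero Structure}.

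Concretely, the steps are: (i) recall the matrix-unit product rule and compute the commutator $[E_{rs},E_{uv}]$; (ii) apply $D$ and use linearity; (iii) observe that the left-hand side of \eqref{Zero Structure} is literally $D([E_{rs},E_{uv}])$ once one writes $[E_{rs},D^{uv}] = E_{rs}D^{uv} - D^{uv}E_{rs}$ and $[D^{rs},E_{uv}] = D^{rs}E_{uv} - E_{uv}D^{rs}$ and substitutes into the derivation identity; (iv) perform the case analysis on the Kronecker deltas $\delta_{su}$ and $\delta_{vr}$. Each of the four cases is then immediate.

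I do not expect any genuine obstacle here: the statement is essentially a bookkeeping restatement of the Lie-derivation axiom specialized to basis elements, and the only thing to be careful about is the ordering of indices in the commutator $[E_{rs},E_{uv}]$ (i.e.\ which Kronecker delta multiplies $D^{rv}$ versus $D^{us}$, and the sign) and the convention that the bimodule action of $E_{rs}$ on $\N^{s\times s}$ is by left/right matrix multiplication so that $[E_{rs}, D^{uv}]$ makes sense. Matching the sign conventions of the displayed right-hand side in \eqref{Zero Structure} is the one place where a transcription error could creep in, so I would double-check by testing a single explicit small case (e.g.\ $s = 2$ with $E_{12}$ and $E_{21}$) before asserting the general formula.
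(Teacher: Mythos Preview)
Your proposal is correct and follows essentially the same approach as the paper: apply the Lie-derivation identity with $S=E_{rs}$ and $T=E_{uv}$, compute $[E_{rs},E_{uv}]=\delta_{su}E_{rv}-\delta_{vr}E_{us}$ via the matrix-unit product rule, and read off the four cases. The paper's proof is in fact even terser than yours, simply stating that the right-hand side ``easily follows'' from $D(E_{rs}E_{uv}-E_{uv}E_{rs})$.
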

\begin{proof}
Writing the defining property of the derivation $D$ with $S=E_{rs}$ and $T=E_{uv}$, we obtain
$$E_{rs}D^{uv} - D^{uv}E_{rs} + D^{rs}E_{uv} - E_{uv}D^{rs} =D(E_{rs}E_{uv}-E_{uv}E_{rs}),$$
and \eqref{Zero Structure} easily follows.
\end{proof}
\begin{proof}[Proof of Theorem \ref{derivation properties}]
Using the defining property of Lie algebra derivations and assuming that $i \ne j$, we have
\begin{alignat*}{2}
E_{j i}D(E_{i i}) - D(E_{i i})E_{j i} + D(E_{j i})E_{i i} - E_{ii}D(E_{j i}) &= D(E_{ji}E_{i i} - E_{ii}E_{j i}), \\
E_{ji}D^{i i} - D^{ii}E_{ji} + D^{j i}E_{i i} - E_{i i}D^{ji} &= D^{ji}. \end{alignat*}
Writing out the $j, i$ entry of these matrices gives
\begin{alignat*}{2}
D^{i i}_{i i} - D^{ii}_{j j} + D^{j i}_{ji} - 0 &= D^{j i}_{j i},\\
D^{i i}_{i i} - D^{i i}_{j j} &= 0, \\
D^{i i}_{i i} &= D^{ii}_{jj}. \end{alignat*}
Since $i, j$ are arbitrary and not equal, we find that  $D^{i i}_{kk} = D^{ii}_{\ell\ell}$ for all $i, k, \ell = 1, \ldots, s$ as desired.

For the second statement, let us first assume that $D$ is inner. In this case $D(S) = SN - NS$ for some $N \in \N^{s \times s}$, for all $S \in \R^{s \times s}$. Thus,
\begin{alignat*}{2}
D^{i i}_{kk} &= E_{k k}D(E_{ii})E_{k k} = E_{kk}(E_{i i}N - NE_{ii})E_{k k} \\
&= \left\{ \begin{array}{cc} 0-0,  & k \ne i, \\ N_{i i} - N_{i i}, & k = i,\end{array} \right. \\
&= 0. \end{alignat*}
To determine the form of $N$, we let $S = E_{ij}$. Then
\begin{alignat*}{2}
D^{ij} &= E_{ij}N - NE_{ij} \\
&= \fmat{ccc} & &  \\ N_{j1} & \hdots & N_{js} \\ & & \emat - \fmat{ccc} & N_{1i} & \\ & \vdots & \\ & N_{si} & \emat \\
&= \fmat{ccccc} & & N_{1i} & & \\ & & \vdots & & \\ N_{j1} & \hdots & N_{ii} - N_{jj} & \hdots & N_{js} \\ & & \vdots & & \\ & & N_{si} & & \emat, \end{alignat*}
where the last matrix has all entries that are not in the $i$-th row or $j$-th column equal to 0.
Hence $E_{ii}D^{ii}$ gives the values for row $i$ of $N$ except at position $i$ where it gives $0$. Further, it gives these values in row $i$. Thus, summing over $i$ gives all nondiagonal entries for $N$. Furthermore, $E_{i1}D^{1i}E_{ii}$ gives the value $N_{ii} - N_{11}$ for all $i$ and its puts it in position $i, i$. Thus, \eqref{definition inner derivation} holds with $c=N_{11}$.

Conversely, let $D^{ii}_{kk}=0$ for all $i,k=1,\ldots,s$, and let $N$ be given by \eqref{definition inner derivation}. Then
\begin{alignat*}{3}
E_{pq}& N - NE_{pq} \\
&= E_{pq}\( \sum_{i = 1}^n E_{ii}D^{ii} + E_{i1}D^{1i}E_{ii} + cI_s\) - \( \sum_{i = 1}^n E_{ii}D^{ii} + E_{i1}D^{1i}E_{ii} + cI_s\)E_{pq} \\
&= E_{pq}E_{qq}D^{qq} + E_{pq}E_{q1}D^{1q}E_{qq} + cE_{pq} - \(\sum_{i = 1}^n E_{ii}D^{ii}E_{pq} \) - E_{p1}D^{1p}E_{pp}E_{pq} - cE_{pq}\\
&= E_{pq}D^{qq} + E_{p1}D^{1q}E_{qq} - \(\sum_{i = 1}^n E_{ii}D^{ii}E_{pq} \) - E_{p1}D^{1p}E_{pq}. \end{alignat*}
Notice that if $p = q$ then we obtain
$$E_{pp}N - NE_{pp} = E_{pp}D^{pp} - \(\sum_{i = 1}^n E_{ii}D^{ii}E_{pp} \). $$
To complete the proof, it needs to be shown that the right-hand side (denoted by $X$ now) is equal to $D^{pq}$. Observe that various summands give us information about the rows or columns of $D^{ii}$, $D^{1p}$, and $D^{1q}$. Note that $E_{pq}D^{qq}$ tells us about row $p$ of $X$, $- \(\sum_{i = 1}^n E_{ii}D^{ii}E_{pq} \)$ tells us about column $q$ of $X$, and finally that $E_{p1}D^{1q}E_{qq} - E_{p1}D^{1p}E_{pq}$ tells us about entry $p, q$ of $X$. Thus, we will look at the equations for row $p$ except where row $p$ intersects column $q$; at column $q$ except where it intersects row $p$;  at the intersection point $p, q$. It must be shown that all other entries of the matrix $D^{pq}$ are $0$. Thus, for $X$ to be equal to $D^{pq}$, the following equalities must hold.
\begin{enumerate}
\item $D^{qq}_{qm} = D^{pq}_{pm}$ where $m \ne q$.
\item $- D^{\ell\ell}_{\ell p} = D^{pq}_{\ell q}$ where $\ell \ne p$.
\item $D^{qq}_{qq}+ D^{1q}_{1q} - D^{pp}_{pp}  - D^{1p}_{1p} = D^{pq}_{pq}$.
\item $0 = D^{pq}_{\ell m}$ where $\ell \ne p$ and $m \ne q$. 
\end{enumerate}
In the special case where $p=q$, the equalities become
\begin{enumerate}
\item $D^{pp}_{pm} = D^{pp}_{pm}$ where  $m \ne p$.
\item $- D^{\ell\ell}_{\ell p} = D^{pp}_{\ell p}$ where  $\ell \ne p$.
\item $0 = D^{pp}_{pp}$.
\item $0 = D^{pp}_{\ell m}$ where $\ell, m \ne p$. 
\end{enumerate}

We first prove these four equalities for the case where $p \ne q$.
\begin{enumerate}
\item In \eqref{Zero Structure}, let $r = p$ and $s = u = v = q$, and look at entry $p,m$. Then, when $m \ne q$, we obtain
$$D^{qq}_{qm} = D^{pq}_{pm}.$$
\item We prove the equality in two steps. The first when $\ell \ne q$ and the second when $\ell=q$. When $\ell \ne q$, apply \eqref{Zero Structure} with $r = s =\ell$, $u = p$,  and $v = q$. Entry $\ell, q$ tells us that
$$D^{pq}_{\ell q} + D^{\ell\ell}_{\ell p} = 0,$$
which implies the desired equality. If $\ell = q$, apply \eqref{Zero Structure} with $r = s = p$ and $u = v = q$. Entry $\ell, p$ tells us that
$$- D^{qq}_{qp} - D^{pp}_{qp} = 0 \implies - D^{qq}_{qp} = D^{pp}_{qp}.$$
Then apply \eqref{Zero Structure} with $r = u = v = p$ and $s = q$. Entry $q, q$ tells us that
$$- D^{pp}_{qp} = - D^{pq}_{qq} \implies D^{pq}_{qq} = D^{pp}_{qp}.$$
Thus,
$$- D^{qq}_{qp}
= D^{pq}_{qq}.$$
\item The equality will be handled in three steps. In the first, $p, q \ne 1$. In the second, $p = 1$ and in the third $q = 1$. Since $p \neq q$ this will cover all cases. Notice that in all these cases, $D^{qq}_{qq} - D^{pp}_{pp} = 0$ since we are assuming that diagonal entries are $0$.
\begin{enumerate}
\item When $p, q \ne 1$, apply \eqref{Zero Structure} with $r = v = p$, $s = q$ and $u = 1$. Then entry $1, q$ tells us that
$$- D^{1p}_{1p}-D^{pq}_{pq}  = -D^{1q}_{1q} \implies D^{1q}_{1q} - D^{1p}_{1p} = D^{pq}_{pq}.$$
\item If $q = 1$, apply \eqref{Zero Structure} with $r = v = p$, $s = u = 1$. Then entry $p, p$ tells us that
$$D^{1p}_{1p} + D^{p1}_{p1} = D^{pp}_{pp} - D^{11}_{pp} \implies  D^{11}_{11} - D^{1p}_{1p}=D^{p1}_{p1}.$$
\item Finally, if $p = 1$, we have
\begin{alignat*}{2}
D^{1q}_{1q} - D^{11}_{11} &= D^{1q}_{1q}. \end{alignat*}
This is immediate since $D^{11}_{11} = 0$.
\end{enumerate}
\item The equality will be handled in three steps.
\begin{enumerate}
\item First, apply  \eqref{Zero Structure} with $r = p$ and $s = u = v = q$. Then entry $\ell, m$ where $\ell \ne p, q$ and $m \ne q$ tells us that
$$0=D^{pq}_{\ell m}.$$
\item Second, apply  \eqref{Zero Structure} with $r = u = v = p$ and $s = q$. Then entry $\ell, m$ where $\ell = q$ and $m \ne p, q$ tells us that
$$-D^{pq}_{q m} = D^{pq}_{qm}.$$
Hence, $D^{pq}_{qm}=0$.
\item Finally, apply  \eqref{Zero Structure} with $r = u = p$, $s = q$ and $v \ne p, q$. Then entry $q, v$ tells us that
$$D^{pq}_{qp} = 0.$$
\end{enumerate}

\end{enumerate}
Now we consider the case $p = q$. 
\begin{enumerate}
\item The equality is immediate.
\item Apply \eqref{Zero Structure} with $r = s = p$ and $\ell=u = v$. Then entry $\ell, p$ tells us, for $\ell \ne p$,  that
$$-D^{\ell\ell}_{\ell p} - D^{pp}_{\ell p} = 0.$$ Hence $$-D^{\ell\ell}_{\ell p} = D^{pp}_{\ell p}.$$
\item This holds since we are assuming that diagonal elements are $0$.
\item Finally,  apply \eqref{Zero Structure} with $s = \ell$, $u = v = p$ and $r\ne p,\ell$. Then entry $r,m$ tells us, for $\ell \ne m,p$, $m\ne p$, that
$$D^{pp}_{\ell m} = 0.$$
If $\ell = m$, we have 
$$D^{pp}_{m m} = 0$$
by assumption.
\end{enumerate}

Thus, we have shown that $D$ is an inner derivation.
\end{proof}

We will need the following two propositions.
\begin{proposition}\label{deltas}
Let $\Omega^{(0)}\subseteq\M_{0, \nc}$, \ldots, $\Omega^{(k)}\subseteq\M_{k, \nc}$ be right admissible nc sets, and let $g\in\T^k(\Omega^{(0)},\ldots,\Omega^{(k)};\N_{0,\nc},\ldots,\N_{k,\nc})$. Then, for every $i,j$ such that $0\le i\le j\le k$,
\begin{equation}\label{deltadelta}
{_{j+1}}\Delta\,{_i}\Delta g={_i}\Delta\,{_j}\Delta g.
\end{equation}
\end{proposition}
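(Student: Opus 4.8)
The plan is to deduce both identities from the difference formulas describing how an order-$m$ nc function behaves when a block upper triangular matrix is substituted into one of its arguments --- the formulas of type (3.40)--(3.46) in \cite{Verb}, already used in the proof of Theorem~\ref{almost result}. Note first that ${_{j+1}}\Delta\,{_i}\Delta g$ and ${_i}\Delta\,{_j}\Delta g$ are both order-$(k+2)$ nc functions with arguments in the same nc sets: applying ${_i}\Delta$ inserts a duplicate of the $i$-th nc set after position $i$ and shifts every argument past position $i$ up by one, which is precisely why the outer operator in the first composition carries the index $j+1$. It therefore suffices to show that the two sides take the same value on every admissible tuple of matrix arguments and $Z$-variables, and the idea is to obtain such a tuple from a single evaluation of $g$ on a doubly block triangular configuration, expanded in two different orders.

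Consider first the case $i<j$. Fix $A,B\in\Omega^{(i)}$, $C,E\in\Omega^{(j)}$, matrices $P$ over $\M_i$ and $Q$ over $\M_j$, remaining matrix arguments $X^m\in\Omega^{(m)}$ and $Z$-variables, all of compatible sizes, with $P$ and $Q$ rescaled by invertible elements of $\R$ (as right admissibility permits) so that $\left[\begin{smallmatrix}A & P\\ 0 & B\end{smallmatrix}\right]\in\Omega^{(i)}$ and $\left[\begin{smallmatrix}C & Q\\ 0 & E\end{smallmatrix}\right]\in\Omega^{(j)}$. Evaluate $g$ at the tuple having $\left[\begin{smallmatrix}A & P\\ 0 & B\end{smallmatrix}\right]$ in slot $i$, $\left[\begin{smallmatrix}C & Q\\ 0 & E\end{smallmatrix}\right]$ in slot $j$, and the corresponding $Z$-variables split appropriately. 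Expanding first the slot-$i$ block and then the slot-$j$ blocks in the resulting three summands yields nine terms, among which the twice-differentiated one is ${_{j+1}}\Delta\,{_i}\Delta g$ evaluated at $(\ldots,A,B,\ldots,C,E,\ldots)$; expanding first the slot-$j$ block and then the slot-$i$ blocks (slot $i$ being untouched since $i<j$) yields nine terms among which the twice-differentiated one is ${_i}\Delta\,{_j}\Delta g$ at the same arguments. The other eight terms coincide term by term in the two expansions, and since both expansions equal the single element $g(\ldots)$, the two leftover terms are equal. Because $A,B,C,E$, the $X^m$, $P,Q$ and the $Z$-variables are arbitrary, this gives ${_{j+1}}\Delta\,{_i}\Delta g={_i}\Delta\,{_j}\Delta g$.

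Now suppose $i=j$, so that the claim reads ${_{i+1}}\Delta\,{_i}\Delta g={_i}\Delta\,{_i}\Delta g$. The same mechanism applies with a $3\times3$ block upper triangular matrix $\left[\begin{smallmatrix}A & P & R\\ 0 & B & Q\\ 0 & 0 & C\end{smallmatrix}\right]$ placed in slot $i$ (scaled into $\Omega^{(i)}$ as right admissibility permits) and the two $Z$-variables adjacent to that slot split into three blocks each. Regrouping this matrix with its top-left $2\times2$ corner as one block and expanding, the twice-differentiated summand is ${_i}\Delta\,{_i}\Delta g(\ldots,A,B,C,\ldots)$; regrouping it with its bottom-right $2\times2$ corner as one block and expanding, the twice-differentiated summand is ${_{i+1}}\Delta\,{_i}\Delta g(\ldots,A,B,C,\ldots)$. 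The remaining six terms agree in the two expansions, so the two leftover terms are equal, and arbitrariness of the data yields the identity. The outermost cases $i=0$ and $j=k$ go through in the same way, using the versions of the difference formulas in which a block matrix substituted into the first or last argument produces a row or column decomposition of the value instead of a three-term sum.

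The only real work is the bookkeeping: one must track how the arguments of ${_i}\Delta g$ are indexed relative to those of $g$, and which $Z$-variable is split into columns and which into rows when a block matrix enters a given argument, so that the eight (respectively six) ``common'' terms in the two expansions are recognized as literally the same expressions. Once this accounting is in place, the identity is a purely formal consequence of the difference formulas, with no analytic or algebraic input beyond them.
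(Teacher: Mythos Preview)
Your proposal is correct and follows essentially the same approach as the paper: both reduce the identity to evaluating $g$ on a doubly block-upper-triangular configuration (two $2\times2$ blocks in distinct slots when $i<j$, a single $3\times3$ block when $i=j$) and recognizing that each side of \eqref{deltadelta} is the same piece of that evaluation. The only cosmetic difference is that the paper extracts this piece directly by multiplying on the left and right with block-projection matrices $[\,I\ 0\,]$ and $\left[\begin{smallmatrix}0\\ I\end{smallmatrix}\right]$, thereby isolating the double-difference term without writing out the full nine- or seven-term expansions and matching the leftovers as you do; but the underlying computation is identical.
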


Notice that, in the special case $i=j=k=0$, \eqref{deltadelta} becomes ${_1}\Delta\Delta g={_0}\Delta\Delta g$, which proves the ``only if" part of Theorem \ref{main}.
\begin{proof}[Proof of Proposition \ref{deltas}]
We will first present the proof for $i=j=k$. For any matrices $X^0\in\Omega_{n_0}^{(0)}$, \ldots, $X^{k}\in\Omega_{n_{k}}^{(k)}$, $X^{k+1}\in\Omega_{n_{k+1}}^{(k)}$, $X^{k+2}\in\Omega_{n_{k+2}}^{(k)}$, $Z^1\in\N_1^{n_0\times n_1}$, \ldots, $Z^k\in\N_k^{n_{k-1}\times n_k}$, $Z^{k+1}\in\M_k^{n_{k}\times n_{k+1}}$, $Z^{k+2}\in\M_k^{n_{k+1}\times n_{k+2}}$, we have
\begin{multline*}
{_{k+1}}\Delta\,{_k}\Delta g(X^0,\ldots,X^{k+2})(Z^1,\ldots,Z^{k+2})\\
={_k}\Delta g\left(X^0,\ldots,X^k,\begin{bmatrix}
X^{k+1} & Z^{k+2}\\
0 & X^{k+2}
\end{bmatrix}\right)(Z^1,\ldots,Z^k,\begin{bmatrix} Z^{k+1} & 0 \end{bmatrix})\begin{bmatrix}
0\\
I_{n_{k+2}}
\end{bmatrix}\\
= g\left(X^0,\ldots,X^{k-1},\begin{bmatrix}
X^k & Z^{k+1} & 0\\
0 & X^{k+1} & Z^{k+2}\\
0 & 0 & X^{k+2}
\end{bmatrix}\right)(Z^1,\ldots,Z^{k-1},\begin{bmatrix} Z^{k} & 0 & 0 \end{bmatrix})\begin{bmatrix}
0\\
0\\
I_{n_{k+2}}
\end{bmatrix}.
\end{multline*}
On the other hand,
\begin{multline*}
{_{k}}\Delta\,{_k}\Delta g(X^0,\ldots,X^{k+2})(Z^1,\ldots,Z^{k+2})\\
={_k}\Delta g\left(X^0,\ldots,X^{k-1},\begin{bmatrix}
X^{k} & Z^{k+1}\\
0 & X^{k+1}
\end{bmatrix},X^{k+2}\right)\left(Z^1,\ldots,Z^{k-1},\begin{bmatrix} Z^{k} & 0 \end{bmatrix},\begin{bmatrix}
0\\
Z^{k+2}
\end{bmatrix}\right)\\
= g\left(X^0,\ldots,X^{k-1},\begin{bmatrix}
X^k & Z^{k+1} & 0\\
0 & X^{k+1} & Z^{k+2}\\
0 & 0 & X^{k+2}
\end{bmatrix}\right)(Z^1,\ldots,Z^{k-1},\begin{bmatrix} Z^{k} & 0 & 0 \end{bmatrix})\begin{bmatrix}
0\\
0\\
I_{n_{k+2}}
\end{bmatrix},
\end{multline*}
which yields \eqref{deltadelta} in the case $i=j=k.$ 
In the cases $i=j<k$, the identity \eqref{deltadelta} is proved similarly.

Next we prove \eqref{deltadelta} for $i=0$, $j=k$. For any matrices $X^0\in\Omega_{n_0}^{(0)}$, $X^1\in\Omega_{n_1}^{(0)}$, $X^2\in\Omega_{n_2}^{(1)}$ \ldots, $X^{k}\in\Omega_{n_{k}}^{(k-1)}$, $X^{k+1}\in\Omega_{n_{k+1}}^{(k)}$, $X^{k+2}\in\Omega_{n_{k+2}}^{(k)}$, $Z^1\in\M_0^{n_0\times n_1}$, $Z^2\in\N_1^{n_1\times n_2}$ \ldots, $Z^k\in\N_{k-1}^{n_{k-1}\times n_k}$, $Z^{k+1}\in\N_k^{n_{k}\times n_{k+1}}$, $Z^{k+2}\in\M_k^{n_{k+1}\times n_{k+2}}$, we have
\begin{multline*}
{_{k+1}}\Delta\,{_0}\Delta g(X^0,\ldots,X^{k+2})(Z^1,\ldots,Z^{k+2})\\
={_0}\Delta g\left(X^0,\ldots,X^k,\begin{bmatrix}
X^{k+1} & Z^{k+2}\\
0 & X^{k+2}
\end{bmatrix}\right)(Z^1,\ldots,Z^k,\begin{bmatrix} Z^{k+1} & 0 \end{bmatrix})\begin{bmatrix}
0\\
I_{n_{k+2}}
\end{bmatrix}\\
=\begin{bmatrix}
I_{n_0} & 0
\end{bmatrix} g\left(\begin{bmatrix}
X^{0} & Z^{1}\\
0 & X^{1}
\end{bmatrix},X^2,\ldots,X^{k},\begin{bmatrix}
X^{k+1} & Z^{k+2}\\
0 & X^{k+2}
\end{bmatrix}\right)\left(\begin{bmatrix}
0\\ Z^2
\end{bmatrix},
Z^3,\ldots,Z^{k},\begin{bmatrix} Z^{k+1} & 0 \end{bmatrix}\right)\begin{bmatrix}
0\\
I_{n_{k+2}}
\end{bmatrix}.
\end{multline*}
On the other hand,
\begin{multline*}
{_{0}}\Delta\,{_k}\Delta g(X^0,\ldots,X^{k+2})(Z^1,\ldots,Z^{k+2})\\
=\begin{bmatrix}
I_{n_0} & 0
\end{bmatrix}{_k}\Delta g\left(\begin{bmatrix}
X^{0} & Z^{1}\\
0 & X^{1}
\end{bmatrix},X^2,\ldots,X^{k+2}\right)\left(\begin{bmatrix}
0\\ Z^2
\end{bmatrix},Z^3,\ldots,Z^{k+2}\right)\\
= \begin{bmatrix}
I_{n_0} & 0
\end{bmatrix} g\left(\begin{bmatrix}
X^{0} & Z^{1}\\
0 & X^{1}
\end{bmatrix},X^2,\ldots,X^{k},\begin{bmatrix}
X^{k+1} & Z^{k+2}\\
0 & X^{k+2}
\end{bmatrix}\right)\left(\begin{bmatrix}
0\\ Z^2
\end{bmatrix},
Z^3,\ldots,Z^{k},\begin{bmatrix} Z^{k+1} & 0 \end{bmatrix}\right)\begin{bmatrix}
0\\
I_{n_{k+2}}
\end{bmatrix},
\end{multline*}
which yields \eqref{deltadelta} in the case $i=0$, $j=k.$ 

In the other cases where $i<j$, the identity \eqref{deltadelta} is proved similarly.
\end{proof}

\begin{proposition}\label{makingzero}
Let $\Omega \subseteq \M_{\nc}$ be a right admissible nc set,  $F \in \T^1(\Omega, \Omega; \N_{\nc}, \M_{\nc})$, and $Y \in \Omega_s$. Suppose $${_0}\Delta F(Y,Y,Y)= {_1}\Delta F(Y,Y,Y).$$  Let $A, B, C \in \R^{s \times s}$ such that 
$$AB = \lambda A \text{ and } BC = \lambda C \text{ for some } \lambda \in \R.$$
Then $$AF(Y, Y)(BY - YB)C = 0.$$
In particular, if $A = B = C = P$ and $P^2 = P$, then  $$PF(Y, Y)(PY - YP)P = 0.$$
\end{proposition}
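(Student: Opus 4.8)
The plan is to translate the statement entirely into the language of the derivation $D_Y$ furnished by Proposition~\ref{derivationprop}, and then exploit the Leibniz rule. Since the hypothesis ${_0}\Delta F(Y,Y,Y)={_1}\Delta F(Y,Y,Y)$ is exactly the hypothesis of Proposition~\ref{derivationprop}, that proposition applies: the map $D_Y\colon\R^{s\times s}\to\N^{s\times s}$, $D_Y(S)=F(Y,Y)(SY-YS)$, is a derivation of the algebra $\R^{s\times s}$ into the bimodule $\N^{s\times s}$, so it satisfies $D_Y(ST)=SD_Y(T)+D_Y(S)T$ for all $S,T\in\R^{s\times s}$. Note that the quantity to be shown to vanish is precisely $AD_Y(B)C$.

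First I would apply the Leibniz rule to the product $AB$. Using $AB=\lambda A$ on the left side and the derivation property on the right, $\lambda D_Y(A)=D_Y(\lambda A)=D_Y(AB)=AD_Y(B)+D_Y(A)B$, which rearranges to $AD_Y(B)=D_Y(A)(\lambda I_s-B)$. Second, I would multiply this identity on the right by $C$ and invoke the second hypothesis $BC=\lambda C$, obtaining $AD_Y(B)C=D_Y(A)(\lambda C-BC)=D_Y(A)(\lambda C-\lambda C)=0$. Unwinding the definition of $D_Y$, this is exactly the assertion $AF(Y,Y)(BY-YB)C=0$. For the particular case, I would simply observe that if $A=B=C=P$ with $P^2=P$, then the hypotheses $AB=\lambda A$ and $BC=\lambda C$ both hold with $\lambda=1$, so the general statement yields $PF(Y,Y)(PY-YP)P=0$.

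As for the main obstacle: there is no serious one, since the content is entirely carried by Proposition~\ref{derivationprop} (and hence ultimately by the identity ${_0}\Delta F={_1}\Delta F$), after which the argument is a two-line manipulation of the Leibniz rule. The only point requiring minor care is the bookkeeping of the left/right bimodule actions, i.e.\ keeping the $\R^{s\times s}$-factors on the correct side of the $\N^{s\times s}$-valued terms; this is automatic once the Leibniz rule is written in the form $D_Y(ST)=SD_Y(T)+D_Y(S)T$ and one multiplies $A$ on the left and $C$ on the right throughout.
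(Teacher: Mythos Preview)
Your proof is correct, and it takes a genuinely different route from the paper.

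The paper proves the identity by a direct, rather lengthy manipulation: it expands $AF(Y,Y)(BY-YB)C$ step by step using the intertwining/difference formulas for ${_0\Delta}F$ and ${_1\Delta}F$ (formulas (3.40)--(3.46) of \cite{Verb}), produces second--order terms such as ${_2\Delta}\,{_0\Delta}F$ and ${_1\Delta}\,{_0\Delta}F$, and finally cancels these by invoking Proposition~\ref{deltas} (the commutation relation ${_{j+1}\Delta}\,{_i\Delta}g={_i\Delta}\,{_j\Delta}g$) together with the hypothesis ${_0\Delta}F={_1\Delta}F$. You instead observe that the entire content is already packaged in Proposition~\ref{derivationprop}: once $D_Y$ is known to satisfy the Leibniz rule $D_Y(ST)=SD_Y(T)+D_Y(S)T$, the hypotheses $AB=\lambda A$ and $BC=\lambda C$ yield $AD_Y(B)C=D_Y(A)(\lambda I_s-B)C=D_Y(A)(\lambda C-BC)=0$ in two lines. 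Your argument is shorter, avoids any second--order difference operators, and does not require Proposition~\ref{deltas}; the paper's computation, on the other hand, is self-contained in the sense that it unwinds everything back to $F$ and its $\Delta$'s rather than relying on the abstract derivation formalism.
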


\begin{proof}
\allowdisplaybreaks
\begin{alignat*}{2}
A&F(Y, Y)(BY - YB)C \\
&= F(Y, Y)(ABY - AYB)C + {_0\Delta} F(Y, Y, Y)(AY - YA, BY - YB)C \\
&= F(Y, Y)(ABYC - AYBC) - {_1\Delta} F(Y, Y, Y)(ABY - AYB, CY - YC) \\
&\hspace{10mm}+ {_0\Delta} F(Y, Y, Y)(AY - YA, BYC - YBC) \\
&\hspace{10mm}- {_2\Delta}\,{_0\Delta} F(Y, Y, Y, Y)(AY - YA, BY - YB, CY - YC) \\
&= F(Y, Y)(\lambda AYC - \lambda AYC) - {_1\Delta} F(Y, Y, Y)(\lambda AY - AYB, CY - YC) \\
&\hspace{10mm}+ {_0\Delta} F(Y, Y, Y)(AY - YA, BYC - \lambda YC) \\
&\hspace{10mm}- {_2\Delta}\,{_0\Delta} F(Y, Y, Y, Y)(AY - YA, BY - YB, CY - YC) \\
&= - {_1\Delta} F(Y, Y, Y)(\lambda YA - AYB, CY - YC) - \lambda\,{_1\Delta} F(Y, Y, Y)(AY - YA, CY - YC) \\
&\hspace{10mm}+ \lambda\,{_0\Delta} F(Y, Y, Y)(AY - YA, CY - YC) + {_0\Delta} F(Y, Y, Y)(AY - YA, BYC - \lambda CY) \\
&\hspace{10mm}- {_2\Delta}{_0\Delta} F(Y, Y, Y, Y)(AY - YA, BY - YB, CY - YC) \\
&= -{_1\Delta} F(Y, Y, Y)(YAB - AYB, CY - YC)  + {_0\Delta} F(Y, Y, Y)(AY - YA, BYC - BCY) \\
&\hspace{10mm}- {_2\Delta}{_0\Delta} F(Y, Y, Y, Y)(AY - YA, BY - YB, CY - YC) \\
&= {_0\Delta} F(Y, Y, Y)((AY - YA)B, CY - YC) - {_0\Delta} F(Y, Y, Y)(AY - YA, B(CY - YC)) \\
&\hspace{10mm}- {_2\Delta}{_0\Delta} F(Y, Y, Y, Y)(AY - YA, BY - YB, CY - YC) \\
&= {_1\Delta} {_0\Delta} F(Y, Y, Y, Y)(AY - YA, BY - YB, CY - YC) \\
&\hspace{10mm}- {_2\Delta}{_0\Delta} F(Y, Y, Y, Y)(AY - YA, BY - YB, CY - YC) \\
&= {_1\Delta} {_1\Delta} F(Y, Y, Y, Y)(AY - YA, BY - YB, CY - YC) \\
&\hspace{10mm}- {_2\Delta}{_1\Delta} F(Y, Y, Y, Y)(AY - YA, BY - YB, CY - YC) \\
&=0, \end{alignat*}
where the last equality follows from Proposition \ref{deltas}.
\end{proof}

\begin{corollary}\label{inner derivation}
Let $\Omega \subseteq \M_{\nc}$ be a right admissible nc set, let $F \in \T^1(\Omega, \Omega; \N_{\nc}, \M_{\nc})$ and let $Y \in \Omega_s$. Then $D_Y$ is an inner derivation. 
\end{corollary}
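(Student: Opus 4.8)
The plan is to assemble the corollary from the three immediately preceding results. Throughout we use, exactly as in Propositions~\ref{derivationprop} and~\ref{makingzero}, that ${_0\Delta}F(Y,Y,Y)={_1\Delta}F(Y,Y,Y)$. Under this hypothesis Proposition~\ref{derivationprop} already tells us that
$$D:=D_Y\colon\R^{s\times s}\to\N^{s\times s},\qquad D(S)=F(Y,Y)(SY-YS),$$
is a Lie-algebra derivation of $\R^{s\times s}$ into the bimodule $\N^{s\times s}$, so that Theorem~\ref{derivation properties} is available. Writing $D^{ij}=D(E_{ij})=F_{ij}$ and letting $D^{ij}_{kk}$ denote the $(k,k)$ entry of the matrix $D^{ij}$, Theorem~\ref{derivation properties} says that $D$ is inner precisely when $D^{ii}_{kk}=0$ for all $i,k=1,\dots,s$, and then $N$ is given by \eqref{definition inner derivation}. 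So everything reduces to checking these vanishings, and this is exactly what Proposition~\ref{makingzero} supplies.

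For the diagonal case $k=i$, since $E_{ii}^2=E_{ii}$ the ``in particular'' clause of Proposition~\ref{makingzero} with $P=E_{ii}$ gives $E_{ii}F(Y,Y)(E_{ii}Y-YE_{ii})E_{ii}=0$, i.e.\ $E_{ii}D^{ii}E_{ii}=0$, which is precisely $D^{ii}_{ii}=0$. For the off-diagonal case $k\ne i$, I would apply Proposition~\ref{makingzero} with $A=C=E_{kk}$, $B=E_{ii}$ and $\lambda=0$: then $AB=E_{kk}E_{ii}=0=\lambda A$ and $BC=E_{ii}E_{kk}=0=\lambda C$, whence $E_{kk}F(Y,Y)(E_{ii}Y-YE_{ii})E_{kk}=0$, that is, $D^{ii}_{kk}=0$. (Alternatively, once $D^{ii}_{ii}=0$ is known, the off-diagonal case follows from the identity $D^{ii}_{kk}=D^{ii}_{\ell\ell}$ of Theorem~\ref{derivation properties}.) In either route, $D^{ii}_{kk}=0$ for all $i,k$, so Theorem~\ref{derivation properties} yields that $D_Y$ is an inner derivation.

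I do not anticipate a real obstacle here: the corollary is a bookkeeping consequence of Proposition~\ref{derivationprop}, Theorem~\ref{derivation properties}, and Proposition~\ref{makingzero}. The only point that takes a moment is to notice that the criterion ``$D^{ii}_{kk}=0$'' of Theorem~\ref{derivation properties} is matched verbatim by the idempotent instance $P=E_{ii}$ of Proposition~\ref{makingzero} (and, off the diagonal, by the degenerate instance $A=C=E_{kk}$, $B=E_{ii}$, $\lambda=0$); after that the proof is immediate.
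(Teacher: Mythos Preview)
Your proof is correct and follows essentially the same approach as the paper: apply Proposition~\ref{makingzero} with $P=E_{ii}$ to get $D^{ii}_{ii}=0$, then invoke Theorem~\ref{derivation properties} to conclude $D_Y$ is inner. The paper uses your ``alternatively'' route (the identity $D^{ii}_{kk}=D^{ii}_{\ell\ell}$ from Theorem~\ref{derivation properties}) for the off-diagonal entries rather than your direct application of Proposition~\ref{makingzero} with $A=C=E_{kk}$, $B=E_{ii}$, $\lambda=0$, but both are valid and the argument is otherwise identical.
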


\begin{proof}
It only needs to be shown that ${(D_Y)}^{ii}_{kk} = 0$ for all $i, k = 1, \ldots, s$, and then Proposition \ref{derivationprop} and Theorem \ref{derivation properties} yield the result. It follows  from Proposition \ref{makingzero} with $P = E_{ii}$ that ${(D_Y)}^{ii}_{ii} = 0$. The first statement of Theorem \ref{derivation properties} then shows that ${(D_Y)}^{ii}_{kk} = 0$ for all $i, k = 1, \ldots, s$.
\end{proof}

We note that the derivation $D$ in Theorem \ref{derivation properties} is automatically inner and thus Corollary \ref{inner derivation} easily follows in the case where $\R=k$ is a field of characteristic 0 and $\N$ is a finite-dimensional vector space over $k$ by the Zassenhaus theorem (see, e.g., \cite[Theorem 6]{Jac}).

We are now in a position to finish the proof of the main theorem.
\begin{proof}[Proof of Theorem \ref{main}]  
First note that if there exists an $f \in \T^0(\Omega; \N)$ such that $\Delta f = F$, then Theorem 3.24 in \cite{Verb} shows that ${_0\Delta} F = {_1\Delta} F$. As we have mentioned earlier, this is also a special case, $k=0$, of Proposition \ref{deltas}.  For the converse, Corollary \ref{inner derivation} shows that $D_Y$ is inner, so that for any fixed $Y$ there exists an $f_0$ such that \eqref{innerder} holds for all $S \in \R^{s \times s}$. Moreover, it follows from Theorem \ref{derivation properties} with $D = D_Y$, that $f_0 = N$ where $N$ is defined by \eqref{definition inner derivation}. This satisfies the requirements of Theorem \ref{almost result} which then guarantees the existence of a function $f \in \T^0(\Omega; \N_{\nc})$ such that $\Delta f = F$. Thus, the proof is complete.
\end{proof}

\section{Integrability of Higher Order NC Functions}

The main result of this section is the following theorem extending Theorem \ref{main} to higher order nc functions.
\begin{theorem}\label{higher-main}
Let $\Omega^{(0)}, \ldots, \Omega^{(k)}$ be right admissible nc sets.  For $j = 0, \ldots, k$, let $$F_j \in \T^{k + 1}(\Omega^{(0)}, \ldots, \Omega^{(j - 1)}, \Omega^{(j)}, \Omega^{(j)}, \Omega^{(j + 1)}, \ldots, \Omega^{(k)}; \N_{0, \nc}, \ldots, \N_{j, \nc}, \M_{j, \nc}, \N_{j + 1, \nc}, \ldots, \N_{k, \nc}).$$ Then there exists an $f \in \T^k(\Omega^{(0)}, \ldots, \Omega^{(k)}; \N_{0, \nc}, \ldots, \N_{k, \nc})$ such that $${_j\Delta} f = F_j,\quad j = 0, \ldots, k,$$ if and only if $${_i\Delta} F_j = {_{j + 1}\Delta} F_i,\quad 0 \leq i \leq j \leq k.$$
Further, $f$ is uniquely determined up to a $k$-linear mapping  $c\colon \N_1\times \cdots \times\N_k \to \N_0$. Thus, if $\tilde{f}$ is another antiderivative, then
$$\tilde{f}(X^0, \ldots, X^k)(Z^1, \ldots, Z^k) = f(X^0, \ldots, X^k)(Z^1, \ldots, Z^k) + C(Z^1, \ldots, Z^k),$$
where, for $X^j\in\Omega^{(j)}_{n_j}$ and $Z^j\in\N_j^{n_{j-1}\times n_j}$, one has 
\begin{equation}\label{C}
C(Z^1, \ldots, Z^k) = \[ \sum_{\underset{j = 1, \ldots, k - 1}{\alpha_j = 1}}^{n_j} c(Z^1_{\alpha_0, \alpha_1}, \ldots, Z^k_{\alpha_{k - 1}, \alpha_k}) \]_{\begin{array}{ll}
\alpha_0=1,\ldots,n_0,\\
\alpha_k=1,\ldots,n_k
\end{array}}.
\end{equation}
\end{theorem}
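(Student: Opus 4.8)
The plan is to follow the blueprint sketched in the introduction: reduce the entire problem to the construction of a single value $f(Y^0,\ldots,Y^k)$ at one base point, and then invoke the extension machinery of \cite{Verb}. The ``only if'' direction is immediate from Proposition~\ref{deltas}: if ${}_j\Delta f=F_j$ for all $j$, then for $0\le i\le j\le k$ one has ${}_i\Delta F_j={}_i\Delta\,{}_j\Delta f={}_{j+1}\Delta\,{}_i\Delta f={}_{j+1}\Delta F_i$.

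For the converse I would first establish the higher-order analog of Theorem~\ref{almost result}: assume the compatibilities ${}_i\Delta F_j={}_{j+1}\Delta F_i$ hold and that there is a $k$-linear map $f_0=f(Y^0,\ldots,Y^k)\in\hom_\R(\N_1^{s_0\times s_1}\otimes\cdots\otimes\N_k^{s_{k-1}\times s_k},\N_0^{s_0\times s_k})$ satisfying \eqref{one}, \eqref{two}, \eqref{three} for all $T_0,\ldots,T_k$. Then the explicit formulas of the introduction --- $f$ at the amplified points $(I_{m_0}\otimes Y^0,\ldots,I_{m_k}\otimes Y^k)$, followed by the telescoping sum of the prescribed ${}_j\Delta f=F_j$ corrections on block-upper-triangular arguments --- define a genuine order-$k$ nc function $f$ on $\bigsqcup\Omega^{(0)}_{s_0m_0}\times\cdots\times\Omega^{(k)}_{s_km_k}$ with ${}_j\Delta f=F_j$. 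As in Section~1 this is a bookkeeping-heavy but routine verification of an intertwining identity in each channel $j$, assembled from the difference formulas (3.40)--(3.46) of \cite{Verb}; the compatibility hypotheses enter exactly where ${}_0\Delta F={}_1\Delta F$ was used in the proof of Theorem~\ref{almost result}, to collapse ${}_i\Delta F_j$ against ${}_{j+1}\Delta F_i$. The passage from $\bigsqcup\Omega^{(0)}_{s_0m_0}\times\cdots$ to all of $\Omega^{(0)}\times\cdots\times\Omega^{(k)}$ uses a product version of Lemma~\ref{single dse} together with Propositions~9.2--9.3 of \cite{Verb}.

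Next, reduce the existence of $f_0$ to a family of derivation problems, one per channel. For fixed $j$, equations \eqref{one}/\eqref{two}/\eqref{three} say precisely that $f_0$ realizes the map ${}_jD={}_jD_Y$ as an inner derivation of $\R^{s_j\times s_j}$ into the bimodule $V_j$ obtained from $\hom_\R(\cdots,\N_0^{s_0\times s_k})$ by letting $T_j$ act through the two sides appearing in that equation (on the output and on $Z^1$ when $j=0$; on $Z^j$ and on $Z^{j+1}$ when $0<j<k$; on $Z^k$ and on the output when $j=k$). After the obvious identifications each $V_j$ is a standard matrix bimodule of the form $W_j^{\,s_j\times s_j}$ over $\R^{s_j\times s_j}$, so all of the machinery of Section~1 applies with $\N$ replaced by $W_j$: the $i=j$ instance of the hypothesis makes ${}_jD$ a derivation (repeating the computation of Proposition~\ref{derivationprop}, now with the extended difference formulas), the higher-order analog of Proposition~\ref{makingzero} --- proved by the same bracket manipulation, invoking Proposition~\ref{deltas} for the mixed iterated $\Delta$'s --- gives $({}_jD)^{\alpha\alpha}_{\alpha\alpha}=0$, and Theorem~\ref{derivation properties} then produces an explicit witness $f_0^{(j)}$ of the shape \eqref{definition inner derivation}.

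The main obstacle is the final gluing: combining the witnesses $f_0^{(0)},\ldots,f_0^{(k)}$ into a \emph{single} $f_0$ satisfying all of \eqref{one}--\eqref{three} simultaneously. The key structural fact is that for $i\ne j$ the $i$-channel and $j$-channel actions on the common space $V=\hom_\R(\cdots,\N_0^{s_0\times s_k})$ commute, so $[T_i,\cdot]$ and $[T_j,\cdot]$ commute as operators, and the remaining compatibilities ${}_i\Delta F_j={}_{j+1}\Delta F_i$ ($i<j$) translate into cocycle-type identities relating $[T_i,\,{}_jD(T_j)]$ and $[T_j,\,{}_iD(T_i)]$. Using these I would correct the witnesses channel by channel: given $f_0$ satisfying \eqref{one}--\eqref{three} through channel $j$, the obstruction to upgrading it to channel $j+1$ is governed by a derivation into the already-stabilized subspace $\bigcap_{i\le j}\{v\in V:[T_i,v]=0\ \forall T_i\}$, which inherits a standard matrix-bimodule structure over $\R^{s_{j+1}\times s_{j+1}}$, so Theorem~\ref{derivation properties} applies once more and removes the obstruction up to a ``scalar'' term; the residual freedom after all channels have been stabilized is exactly a $k$-linear map $c\colon\N_1\times\cdots\times\N_k\to\N_0$. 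This last point also gives uniqueness: if ${}_j\Delta g=0$ for all $j$, the amplification and block-triangular formulas force $g$ to be assembled from a single such $c$ as in \eqref{C} (again by Propositions~9.2--9.3 of \cite{Verb}), so $f$ is determined up to the displayed constant.
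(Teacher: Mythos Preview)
Your outline is correct and matches the paper's architecture almost step for step: the ``only if'' via Proposition~\ref{deltas}; the higher-order analog of Theorem~\ref{almost result} (this is exactly Theorem~\ref{almost result again} in the paper); the per-channel derivation setup and inner-derivation machinery (the paper's Propositions~\ref{higher-derivations}, \ref{inner derivations}, Corollary~\ref{inner derivation again}, and Proposition~\ref{with zero functions}); extension to all of $\Omega^{(0)}\times\cdots\times\Omega^{(k)}$ via Lemma~\ref{higher dse} and \cite[Proposition~9.3]{Verb}; and uniqueness by the block-triangular formulas.

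The one genuine divergence is the gluing step. You run an inductive channel-by-channel correction: having a witness $f_0$ for channels $0,\ldots,j$, you show the obstruction $T_{j+1}\mapsto {}_{j+1}D(T_{j+1})-[T_{j+1},f_0]$ is a derivation taking values in the joint centralizer $\bigcap_{i\le j}\ker[T_i,\cdot]$, identify that centralizer with a standard matrix bimodule, and invoke Theorem~\ref{derivation properties} again. The cocycle identity $[T_i,{}_jD(T_j)]=[T_j,{}_iD(T_i)]$ you need is indeed a direct consequence of ${}_i\Delta F_j={}_{j+1}\Delta F_i$ together with the intertwining formulas for $F_i,F_j$, so this scheme works. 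The paper instead writes down, in Proposition~\ref{g}, a single explicit $g$ built from the per-channel witnesses $g_\ell$ of Proposition~\ref{with zero functions} by an averaging formula, and verifies directly (a long computation using ${}_i\Delta F_j={}_{j+1}\Delta F_i$) that this $g$ satisfies \eqref{defining formula j equals 0}--\eqref{defining formula j equals k} simultaneously. Your argument is conceptually cleaner and avoids that computation; the paper's explicit formula has the payoff of making the boundedness of $f(Y^0,\ldots,Y^k)$ transparent, which is exactly what is cited in the proofs of Theorems~\ref{analytic} and its neighbors in Section~3.

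Two small points you should make explicit if you write this up. First, the identification ``$V_j\cong W_j^{\,s_j\times s_j}$'' and, inductively, of each iterated centralizer with a matrix bimodule over $\R^{s_{j+1}\times s_{j+1}}$, is correct but not quite automatic: one has to decompose $\hom_\R(\cdots)$ along the $s_j$-index and check that the actions in the remaining channels still match the standard matrix actions after restriction. Second, to re-invoke Theorem~\ref{derivation properties} for the obstruction derivation $\delta$ you need the diagonal vanishing $E_{\alpha\alpha}\cdot\delta(E_{\alpha\alpha})\cdot E_{\alpha\alpha}=0$; this follows because it holds for ${}_{j+1}D$ by (the higher-order analog of) Proposition~\ref{makingzero} and automatically for the inner part $[\,\cdot\,,f_0]$.
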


We first extend Lemma \ref{single dse} to Cartesian products of nc sets.
\begin{lemma}\label{higher dse}
Let $\Omega^{(j)} \subseteq \M_{j, \nc}$ be right admissible nc sets for $j = 0, \ldots, k$. Let $s_0, \ldots, s_k$ be integers such that $\Omega^{(0)}_{s_0}, \ldots, \Omega^{(j)}_{s_k}$ are nonempty. Then
$$\(\bigsqcup_{m_0 = 1}^{\infty} \Omega^{(0)}_{s_0m_0} \)_{\rm d.s.e.} \times \hdots \times \(\bigsqcup_{m_k = 1}^{\infty} \Omega^{(k)}_{s_km_k} \)_{\rm d.s.e.} = \Omega^{(0)}_{\rm d.s.e} \times \hdots \times \Omega^{(k)}_{\rm d.s.e.}$$
\end{lemma}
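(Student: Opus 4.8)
The plan is to deduce the identity directly from Lemma~\ref{single dse}, applied separately in each coordinate. For each fixed $j\in\{0,\ldots,k\}$ the set $\Omega^{(j)}\subseteq\M_{j,\nc}$ is a right admissible nc set and $\Omega^{(j)}_{s_j}$ is nonempty by hypothesis, so Lemma~\ref{single dse} applies verbatim with $\Omega=\Omega^{(j)}$ and $s=s_j$, yielding
$$\(\bigsqcup_{m_j=1}^{\infty}\Omega^{(j)}_{s_jm_j}\)_{\rm d.s.e.}=\Omega^{(j)}_{\rm d.s.e.}.$$

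With these $k+1$ coordinatewise equalities in hand, I would invoke the elementary fact that a Cartesian product of sets is determined by its factors: if $A_j=B_j$ for $j=0,\ldots,k$, then $A_0\times\cdots\times A_k=B_0\times\cdots\times B_k$. Setting $A_j=\(\bigsqcup_{m_j=1}^{\infty}\Omega^{(j)}_{s_jm_j}\)_{\rm d.s.e.}$ and $B_j=\Omega^{(j)}_{\rm d.s.e.}$ then gives the asserted identity immediately.

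There is essentially no obstacle here, and no new direct-sum argument coupling the different factors is required: the statement involves an honest Cartesian product of d.s.e.\ sets on each side, rather than the d.s.e.\ closure of a product, so the reduction to Lemma~\ref{single dse} coordinate by coordinate is all that is needed. The only point worth stating explicitly in the write-up is that the hypothesis ``$\Omega^{(j)}_{s_j}$ nonempty for each $j$'' is exactly what is required to invoke Lemma~\ref{single dse} in every coordinate.
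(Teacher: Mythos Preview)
Your proposal is correct and follows exactly the paper's own argument: apply Lemma~\ref{single dse} in each coordinate and conclude that the Cartesian products of equal factors are equal. The paper's proof is a one-line version of what you wrote.
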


\begin{proof}
By Lemma \ref{single dse}, corresponding elements of the direct products are equal. Hence, both quantities are necessarily equal.
\end{proof}

The following theorem generalizes Theorem \ref{almost result} to higher order nc functions.
\begin{theorem}\label{almost result again}
Let $\Omega^{(j)} \subseteq \M_{j, \nc}$ be right admissible nc sets, let $Y^j \in \Omega^{(j)}_{s_j}$,
and let $$F_j \in \T^{k + 1}(\Omega^{(0)}, \ldots, \Omega^{(j - 1)}, \Omega^{(j)}, \Omega^{(j)}, \Omega^{(j + 1)}, \ldots, \Omega^{(k)}; \N_{0, \nc}, \ldots, \N_{j, \nc}, \M_{j, \nc}, \N_{j + 1, \nc}, \ldots, \N_{k, \nc})$$
satisfy ${_i\Delta} F_j = {_{j + 1}\Delta} F_i$ for $0\le i\le j\le k$. Suppose there exists $$g \in \hom_{\R}(\N_1^{s_0 \times s_1} \otimes \hdots \otimes \N_k^{s_{k - 1} \times s_k}, \N_0^{s_0 \times s_k})$$ such that 
\begin{alignat}{2}\label{defining formula j equals 0}
&\begin{aligned}
F_0&(Y^0, Y^0, Y^1, \ldots, Y^k)(R_0Y^0 - Y^0R_0, Z^1, \ldots, Z^k) \\
&= R_0g(Z^1, \ldots, Z^k) - g(R_0Z^1, Z^2, \ldots, Z^k),\quad  R_0 \in \R^{s_0 \times s_0},\end{aligned} \end{alignat}
\begin{alignat}{2}\label{defining formula j greater 0}
&\begin{aligned}
F_j&(Y^0, \ldots, Y^{j - 1}, Y^j, Y^j, Y^{j + 1}, \ldots, Y^k)(Z^1, \ldots, Z^j, R_jY^j - Y^jR_j, Z^{j + 1}, \ldots, Z^k) \\
&= g(Z^1, \ldots, Z^{j - 1}, Z^jR_j, Z^{j + 1}, \ldots, Z^k) - g(Z^1, \ldots, Z^{j - 1}, Z^j, R_jZ^{j + 1}, \ldots, Z^k),\quad R_j \in 
\R^{s_j \times s_j},  \end{aligned} \end{alignat}
for all $j=1,\ldots,k-1$, and 
\begin{alignat}{2}\label{defining formula j equals k}
&\begin{aligned}
F_k&(Y^0, \ldots, Y^{j - 1}, Y^k, Y^k)(Z^1, \ldots, Z^k, R_kY^k - Y^kR_k) \\
&= g(Z^1, \ldots, Z^{k - 1}, Z^kR_k) - g(Z^1, \ldots, Z^{k - 1}, Z^k)R_k, \quad R_k \in \R^{s_k \times s_k}.\end{aligned} \end{alignat}
Then there exists $f \in \T^k(\Omega^{(0)}, \ldots, \Omega^{(k)}; \N_{0, \nc}, \ldots, \N_{k, \nc})$ such that ${_j\Delta} f = F_j$, $j=0,\ldots,k$. Furthermore, for $X^j \in \Omega^{(j)}_{s_jm_j}$, $j = 0, \ldots, k$, one has
\begin{multline}\label{higher-f}
f(X^0, \ldots, X^k)(Z^1, \ldots, Z^k)
= G(Z^1, \ldots, Z^k) \\
+ \sum_{j = 0}^k F_j(I_{m_0} \otimes Y^0, \ldots, I_{m_j} \otimes Y^j,X^j, \ldots, X^k)(Z^1, \ldots, Z^j, X^j - I_{m_j} \otimes Y^j, Z^{j + 1}, \ldots, Z^k), \end{multline}
where
\begin{equation*}
G(Z^1, \ldots, Z^k)
 = \[ \sum_{\underset{j = 1, \ldots, k - 1}{i_j = 1}}^{m_j} g(Z^1_{i_0, i_1}, \ldots, Z^k_{i_{k - 1}, i_k}) \]_{\begin{array}{ll}
i_0=1,\ldots,m_0,\\
i_k=1,\ldots,m_k
\end{array}
}.
\end{equation*}
\end{theorem}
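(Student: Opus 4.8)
The plan is to run, at order~$k$, the three-step argument behind Theorem~\ref{almost result}: amplify the hypotheses, check that the explicit formula~\eqref{higher-f} respects intertwinings and hence defines an nc function on a ``reduced'' domain, extend it, and finally verify that ${_j\Delta}f=F_j$. The first step is pure bookkeeping: applying the direct-sum and intertwining properties of each $F_j$ to block-diagonal matrices $I_{m_i}\otimes Y^i$, the hypotheses \eqref{defining formula j equals 0}--\eqref{defining formula j equals k} amplify to statements in which each $Y^i$ is replaced by $I_{m_i}\otimes Y^i$, the element $g$ is replaced by $G$, and the $R_j$ are allowed to be rectangular; for instance, for $j=0$ one obtains
$$F_0(I_{p_0}\otimes Y^0,I_{m_0}\otimes Y^0,I_{m_1}\otimes Y^1,\ldots,I_{m_k}\otimes Y^k)\bigl(R_0(I_{m_0}\otimes Y^0)-(I_{p_0}\otimes Y^0)R_0,Z^1,\ldots,Z^k\bigr)=R_0G(Z^1,\ldots,Z^k)-G(R_0Z^1,Z^2,\ldots,Z^k)$$
for $R_0\in\R^{s_0p_0\times s_0m_0}$, and similarly for the middle and last slots. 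These are the exact analogues of the amplified form of \eqref{innerder} used at the start of the proof of Theorem~\ref{almost result}.

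The core of the argument is to show that $f$, as defined by \eqref{higher-f}, respects intertwinings in each of its $k+1$ arguments on $\Omega'=\bigl(\bigsqcup_{m_0}\Omega^{(0)}_{s_0m_0}\bigr)\times\cdots\times\bigl(\bigsqcup_{m_k}\Omega^{(k)}_{s_km_k}\bigr)$. Fixing an intertwining $T_\ell X^\ell_1=X^\ell_2T_\ell$ in slot~$\ell$, one substitutes into \eqref{higher-f} and rewrites each summand $F_j(I_{m_0}\otimes Y^0,\ldots,I_{m_j}\otimes Y^j,X^j,\ldots,X^k)(\cdots)$ using the difference formulas (3.40)--(3.46) of \cite{Verb} for the operators ${_i\Delta}$, exactly as in the long display in the proof of Theorem~\ref{almost result}: each such term splits into a piece that telescopes against a neighbouring summand and a ${_i\Delta}F_j$-type correction, the interior corrections cancel in consecutive pairs, and the surviving boundary term is absorbed by the amplified hypothesis in slot~$\ell$ together with the matching telescoping contribution from $G$. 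I expect this to be the main obstacle — not for conceptual reasons but because of the index bookkeeping: one must track, along the telescope, which $Z^i$ is multiplied by $T_\ell$ and which by $T_\ell^{-1}$, and how the ``doubled'' argument position of $F_j$ migrates. Once the three families of intertwinings are verified, $f$ is an nc function on $\Omega'$; by the higher-order forms of \cite[Propositions~9.2 and~9.3]{Verb} together with Lemma~\ref{higher dse}, both $f$ and each $F_j$ extend uniquely to $\Omega^{(0)}_{\rm d.s.e.}\times\cdots\times\Omega^{(k)}_{\rm d.s.e.}$, and in particular $f$ restricts to an nc function on $\Omega^{(0)}\times\cdots\times\Omega^{(k)}$.

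It remains to verify ${_j\Delta}f=F_j$. On $\Omega'$, fix $j$, place a block upper triangular matrix $\left[\begin{smallmatrix}U&Z\\0&V\end{smallmatrix}\right]$ in slot~$j$ with amplified base points $I_{m_i}\otimes Y^i$ in the remaining slots, and apply the slot-$j$ intertwining property just established with $S=\left[\begin{smallmatrix}I&0\\0&0\end{smallmatrix}\right]$; reading off the relevant block entry expresses ${_j\Delta}f$ as $F_j$ plus a ${_i\Delta}F_j$-type correction, which vanishes by the difference formula (3.45) of \cite{Verb} together with the hypothesis ${_i\Delta}F_j={_{j+1}\Delta}F_i$ — precisely as ${_0\Delta}F={_1\Delta}F$ is used at the corresponding point in the proof of Theorem~\ref{almost result}. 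The identity ${_j\Delta}f=F_j$ then propagates from $\Omega'$ to the whole domain by amplification: a hypothetical failure at a tuple $(U^0,\ldots,U^k)$ with test matrices $Z^i$ would, upon tensoring each $U^i$ with an identity and each $Z^i$ with a matrix of all ones, produce a failure inside $\Omega'$, a contradiction. This gives $f\in\T^k(\Omega^{(0)},\ldots,\Omega^{(k)};\N_{0,\nc},\ldots,\N_{k,\nc})$ with ${_j\Delta}f=F_j$ for $j=0,\ldots,k$, and \eqref{higher-f} holds by construction.
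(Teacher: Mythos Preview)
Your proposal is correct and mirrors the paper's proof; the only difference is packaging. The paper establishes a single master identity with arbitrary rectangular $R_0,\ldots,R_k$ in all slots simultaneously,
\[
R_0f(X^0,\ldots,X^k)(Z^1R_1,\ldots,Z^kR_k)-f(W^0,\ldots,W^k)(R_0Z^1,\ldots,R_{k-1}Z^k)R_k=\sum_{i=0}^kF_i(W^0,\ldots,W^i,X^i,\ldots,X^k)(\ldots,R_iX^i-W^iR_i,\ldots),
\]
from which both the nc property of $f$ and the verification of ${_j\Delta}f=F_j$ drop out, whereas you do each slot separately; just be aware that in your last step $S=\left[\begin{smallmatrix}I&0\\0&0\end{smallmatrix}\right]$ is not an intertwiner, so what you are really invoking there is this identity for a general $R_j$, not the bare nc intertwining property of $f$.
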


\begin{proof}
First, note that formulas \eqref{defining formula j equals 0}--\eqref{defining formula j equals k} can be modifed by the direct sum rule using  $R_j \in \R^{s_jp_j \times s_jm_j}$ for $j = 1, \ldots, k$ to give the following:
\begin{alignat*}{2}
F_0&(I_{p_0} \otimes Y^0, I_{m_0} \otimes Y^0, I_{m_1} \otimes Y^1, \ldots, I_{m_k} \otimes Y^k)(R_0(I_{m_0} \otimes Y^0) - (I_{p_0} \otimes Y^0)R_0, Z^1, \ldots, Z^k) \\
&= R_0G(Z^1, \ldots, Z^k) - G(R_0Z^1, Z^2, \ldots, Z^k), \\
F_j&(I_{m_0} \otimes Y^0, \ldots, I_{m_{j - 1}} \otimes Y^{j - 1}, I_{p_j} \otimes Y^j, I_{m_j} \otimes Y^j, \ldots, I_{m_k} \otimes Y^k) \\ 
&\hspace{50mm} (Z^1, \ldots, Z^j, R_j(I_{m_j} \otimes Y^j) - (I_{p_j} \otimes Y^j)R_j, Z^{j + 1}, \ldots, Z^k) \\
&= G(Z^1, \ldots, Z^{j - 1}, Z^jR_j, Z^{j + 1}, \ldots, Z^k) - G(Z^1, \ldots, Z^j, R_jZ^{j + 1}, Z^{j + 2}, \ldots, Z^k),\ j=1,\ldots,k-1, \\
F_k&(I_{m_0} \otimes Y^0, \ldots, I_{m_{k - 1}} \otimes Y^{k - 1}, I_{p_k} \otimes Y^k, I_{m_k} \otimes Y^k)(Z^1, \ldots, Z^k, R_k(I_{m_k} \otimes Y^k) - (I_{p_k} \otimes Y^k)R_k) \\
&= G(Z^1, \ldots, Z^{k - 1}, Z^kR_k)  - G(Z^1, \ldots, Z^k)R_k. \end{alignat*}

Next, it will be shown that, for $X^j\in\Omega^{(j)}_{sm_j}$, $W^j\in\Omega^{(j)}_{sp_j}$, $R_j \in \R^{s_jp_j \times s_jm_j}$, and for $f(X)$, $f(W)$ defined as in \eqref{higher-f},
\begingroup
\allowdisplaybreaks
\begin{alignat*}{2}
&R_0f(X^0,\ldots,X^k)(Z^1R_1, \ldots, Z^kR_k) -
f(W^0, \ldots, W^k)(R_0Z^1, \ldots, R_{k-1}Z^k)R_k \\
&\hspace{15mm} = \sum_{i = 0}^k F_i(W^0, \ldots, W^i,X^i, \ldots, X^k)(Z^1, \ldots, Z^i, R_iX^i - W^iR_i, Z^{i + 1}, \ldots, Z^k).
 \end{alignat*} \endgroup
Indeed, using the difference formulas (3.40)--(3.46) from \cite{Verb}, we obtain
\allowdisplaybreaks
\begin{alignat*}{2}
&R_0f(X^0,\ldots,X^k)(Z^1R_1, \ldots, Z^kR_k) -
f(W^0, \ldots, W^k)(R_0Z^1, \ldots, R_{k-1}Z^k)R_k \\
&\hspace{10mm}=R_0G(Z^1R_1, \ldots, Z^kR_k)\\
&\hspace{20mm}+R_0\sum_{j = 0}^k F_j(I_{m_0} \otimes Y^0,\ldots, I_{m_j} \otimes Y^j,X^j, \ldots, X^k)\\
&\hspace{40mm}(Z^1R_1, \ldots, Z^jR_j, X^j -I_{m_j} \otimes Y^j, Z^{j + 1}R_{j+1}, \ldots, Z^kR^k)\\
&-G(R_0Z^1, \ldots, R_{k-1}Z^k)R_k \\
&\hspace{20mm}-\sum_{j = 0}^k F_j(I_{p_0} \otimes Y^0,\ldots, I_{p_j} \otimes Y^j,W^j, \ldots, W^k)\\
&\hspace{40mm}(R_0Z^1, \ldots, R_{j-1}Z^j, W^j -I_{p_j} \otimes Y^j, R_jZ^{j + 1}, \ldots, R_{k-1}Z^k)R_k\\
&=R_0G(Z^1R_1, \ldots, Z^kR_k)-G(R_0Z^1, \ldots, R_{k-1}Z^k)R_k \\
&+\sum_{j = 0}^k R_0F_j(I_{m_0} \otimes Y^0,\ldots, I_{m_j} \otimes Y^j,X^j, \ldots, X^k)\\
&\hspace{40mm}
(Z^1R_1, \ldots, Z^jR_j, X^j -I_{m_j} \otimes Y^j, Z^{j + 1}R_{j+1}, \ldots, Z^kR^k)\\
&-\sum_{j = 0}^k F_j(I_{p_0} \otimes Y^0,\ldots, I_{p_j} \otimes Y^j,W^j, \ldots, W^k)\\
&\hspace{40mm}(R_0Z^1, \ldots, R_{j-1}Z^j, W^j -I_{p_j} \otimes Y^j, R_jZ^{j + 1}, \ldots, R_{k-1}Z^k)R_k\\
&=\sum_{j = 0}^k F_j(I_{p_0} \otimes Y^0,\ldots, I_{p_j} \otimes Y^j,I_{m_j} \otimes Y^j,\ldots, I_{m_k} \otimes Y^k)\\
&\hspace{20mm}
(R_0Z^1, \ldots, R_{j-1}Z^j,R_j(I_{m_j} \otimes Y^j)-(I_{p_j} \otimes Y^j)R_j,  Z^{j + 1}R_{j+1}, \ldots, Z^kR_k)\\
&+\sum_{j = 0}^k F_j(W^0,\ldots, W^j,X^j, \ldots, X^k)\\
&\hspace{40mm}
(R_0Z^1, \ldots, R_{j-1}Z^j, R_jX^j -R_j(I_{m_j} \otimes Y^j), Z^{j + 1}R_{j+1}, \ldots, Z^kR_k)\\
&+\sum_{j = 0}^k\sum_{i=0}^j {_i}\Delta F_j(W^0,\ldots, W^i,I_{m_i} \otimes Y^i,\ldots, I_{m_j} \otimes Y^j,X^j, \ldots, X^k)\\
&
(R_0Z^1, \ldots, R_{i-1}Z^i, R_i(I_{m_i} \otimes Y^i)-W^iR_i, Z^{i + 1}R_{i+1}, \ldots, Z^jR_j,X^j-I_{m_j} \otimes Y^j,Z^{j + 1}R_{j+1}, \ldots, Z^kR_k)\\
&-\sum_{j = 0}^k F_j(I_{p_0} \otimes Y^0,\ldots, I_{p_j} \otimes Y^j,I_{m_j} \otimes Y^j,\ldots, I_{m_k} \otimes Y^k)\\
&\hspace{20mm}(R_0Z^1, \ldots, R_{j-1}Z^j, W^jR_j -(I_{p_j} \otimes Y^j)R_j,Z^{j + 1} R_{j+1}, \ldots, Z^kR_k)\\
&+\sum_{j = 0}^k\sum_{i=j}^k {_{i+1}}\Delta F_j(I_{p_0} \otimes Y^0,\ldots, I_{p_j} \otimes Y^j,W^j,\ldots, W^i,I_{m_i} \otimes Y^i,\ldots, I_{m_k} \otimes Y^k)\\
&
(R_0Z^1, \ldots, R_{j-1}Z^j, W^j-I_{p_j} \otimes Y^j,  R_jZ^{j+1}, \ldots, R_{i-1}Z^{i}, R_i(I_{m_i} \otimes Y^i)-W^iR_i,Z^{i + 1}R_{i+1}, \ldots, Z^kR_k)\\
&=\sum_{j = 0}^k F_j(I_{p_0} \otimes Y^0,\ldots, I_{p_j} \otimes Y^j,I_{m_j} \otimes Y^j,\ldots, I_{m_k} \otimes Y^k)\\
&\hspace{20mm}
(R_0Z^1, \ldots, R_{j-1}Z^j,R_j(I_{m_j} \otimes Y^j)-W^jR_j,  Z^{j + 1}R_{j+1}, \ldots, Z^kR_k)\\
&+\sum_{j = 0}^k\sum_{i=j}^k {_{i+1}}\Delta F_j(I_{p_0} \otimes Y^0,\ldots, I_{p_j} \otimes Y^j,W^j,\ldots, W^i,I_{m_i} \otimes Y^i,\ldots, I_{m_k} \otimes Y^k)\\
&
(R_0Z^1, \ldots, R_{j-1}Z^j, W^j-I_{p_j} \otimes Y^j,  R_jZ^{j+1}, \ldots, R_{i-1}Z^{i}, R_i(I_{m_i} \otimes Y^i)-W^iR_i,Z^{i + 1}R_{i+1}, \ldots, Z^kR_k)\\
&+\sum_{j = 0}^k\sum_{i=0}^j {_i}\Delta F_j(W^0,\ldots, W^i,I_{m_i} \otimes Y^i,\ldots, I_{m_j} \otimes Y^j,X^j, \ldots, X^k)\\
&
(R_0Z^1, \ldots, R_{i-1}Z^i, R_i(I_{m_i} \otimes Y^i)-W^iR_i, Z^{i + 1}R_{i+1}, \ldots, Z^jR_j,X^j-I_{m_j} \otimes Y^j,Z^{j + 1}R_{j+1}, \ldots, Z^kR_k)\\
&+\sum_{j = 0}^k F_j(W^0,\ldots, W^j,X^j, \ldots, X^k)\\
&\hspace{40mm}
(R_0Z^1, \ldots, R_{j-1}Z^j, R_jX^j -R_j(I_{m_j} \otimes Y^j), Z^{j + 1}R_{j+1}, \ldots, Z^kR_k)\\
&=\sum_{i = 0}^k F_i(I_{p_0} \otimes Y^0,\ldots, I_{p_i} \otimes Y^i,I_{m_i} \otimes Y^i,\ldots, I_{m_k} \otimes Y^k)\\
&\hspace{20mm}
(R_0Z^1, \ldots, R_{i-1}Z^i,R_i(I_{m_i} \otimes Y^i)-W^iR_i,  Z^{i + 1}R_{i+1}, \ldots, Z^kR_k)\\
&+\sum_{i = 0}^k\sum_{j=0}^i {_{j}}\Delta F_i(I_{p_0} \otimes Y^0,\ldots, I_{p_j} \otimes Y^j,W^j,\ldots, W^i,I_{m_i} \otimes Y^i,\ldots, I_{m_k} \otimes Y^k)\\
&
(R_0Z^1, \ldots, R_{j-1}Z^j, W^j-I_{p_j} \otimes Y^j,  R_jZ^{j+1}, \ldots, R_{i-1}Z^{i}, R_i(I_{m_i} \otimes Y^i)-W^iR_i,Z^{i + 1}R_{i+1}, \ldots, Z^kR_k)\\
&+\sum_{i = 0}^k\sum_{j=i}^k {_{j+1}}\Delta F_i(W^0,\ldots, W^i,I_{m_i} \otimes Y^i,\ldots, I_{m_j} \otimes Y^j,X^j, \ldots, X^k)\\
&
(R_0Z^1, \ldots, R_{i-1}Z^i, R_i(I_{m_i} \otimes Y^i)-W^iR_i, Z^{i + 1}R_{i+1}, \ldots, Z^jR_j,X^j-I_{m_j} \otimes Y^j,Z^{j + 1}R_{j+1}, \ldots, Z^kR_k)\\
&+\sum_{i = 0}^k F_i(W^0,\ldots, W^i,X^i, \ldots, X^k)
(R_0Z^1, \ldots, R_{i-1}Z^i, R_iX^i -R_i(I_{m_i} \otimes Y^i), Z^{i + 1}R_{i+1}, \ldots, Z^kR_k)\\
&=\sum_{i = 0}^k F_i(W^0,\ldots, W^i,I_{m_i} \otimes Y^i,\ldots, I_{m_k} \otimes Y^k)\\
&\hspace{20mm}
(R_0Z^1, \ldots, R_{i-1}Z^i,R_i(I_{m_i} \otimes Y^i)-W^iR_i,  Z^{i + 1}R_{i+1}, \ldots, Z^kR_k)\\
&+\sum_{i = 0}^k\sum_{j=i}^k {_{j+1}}\Delta F_i(W^0,\ldots, W^i,I_{m_i} \otimes Y^i,\ldots, I_{m_j} \otimes Y^j,X^j, \ldots, X^k)\\
&
(R_0Z^1, \ldots, R_{i-1}Z^i, R_i(I_{m_i} \otimes Y^i)-W^iR_i, Z^{i + 1}R_{i+1}, \ldots, Z^jR_j,X^j-I_{m_j} \otimes Y^j,Z^{j + 1}R_{j+1}, \ldots, Z^kR_k)\\
&+\sum_{i = 0}^k F_i(W^0,\ldots, W^i,X^i, \ldots, X^k)
(R_0Z^1, \ldots, R_{i-1}Z^i, R_iX^i -R_i(I_{m_i} \otimes Y^i), Z^{i + 1}R_{i+1}, \ldots, Z^kR_k)\\
&=\sum_{i = 0}^k F_i(W^0,\ldots, W^i,X^i,\ldots, X^k)
(R_0Z^1, \ldots, R_{i-1}Z^i,R_i(I_{m_i} \otimes Y^i)-W^iR_i,  Z^{i + 1}R_{i+1}, \ldots, Z^kR_k)\\
&+\sum_{i = 0}^k F_i(W^0,\ldots, W^i,X^i, \ldots, X^k)
(R_0Z^1, \ldots, R_{i-1}Z^i, R_iX^i -R_i(I_{m_i} \otimes Y^i), Z^{i + 1}R_{i+1}, \ldots, Z^kR_k)\\
&=\sum_{i = 0}^k F_i(W^0,\ldots, W^i,X^i,\ldots, X^k)
(R_0Z^1, \ldots, R_{i-1}Z^i,R_iX^i-W^iR_i,  Z^{i + 1}R_{i+1}, \ldots, Z^kR_k).
\end{alignat*}

With this equality, it is clear now that $f$, as defined in \eqref{higher-f}, is an order $k$ nc function on $\(\bigsqcup_{m_0 = 1}^{\infty} \Omega^{(0)}_{s_0m_0} \)\times \hdots \times \(\bigsqcup_{m_k = 1}^{\infty} \Omega^{(k)}_{s_km_k} \)$ satisfying $f(Y^1,\ldots,Y^k)=g$. By Proposition 9.3 in \cite{Verb} and Lemma \ref{higher dse}, $f$ can be extended uniquely to an order $k$ nc function on 
$\Omega^{(0)}_{\rm d.s.e} \times \hdots \times \Omega^{(k)}_{\rm d.s.e.}.$
In particular, the restriction of the extended nc function to $\Omega^{(0)} \times \hdots \times \Omega^{(k)}$ is a nc function (of order $k$) as well.

Next, it will be shown that ${_j\Delta} f = F_j$ on $\(\bigsqcup_{m_0 = 1}^{\infty} \Omega^{(0)}_{s_0m_0} \)\times \hdots \times \(\bigsqcup_{m_k = 1}^{\infty} \Omega^{(k)}_{s_km_k}\)$. To do this, let $U \in \Omega^{(j)}_{s_jn_j}, V \in \Omega^{(j)}_{s_jq_j}, Z \in \R^{s_jn_j \times s_jq_j}$. We will show that 
\begin{alignat*}{2}
{_j\Delta} f&(X^0, \ldots, X^{j - 1}, U^j, V^j, X^{j + 1}, \ldots, X^k)(Z^1, \ldots, Z^j, Z, Z^{j + 1}, \ldots, Z^k) \\
&= F_j(X^0, \ldots, X^{j - 1}, U^j, V^j, X^{j + 1}, \ldots, X^k)(Z^1, \ldots, Z^j, Z, Z^{j + 1}, \ldots, Z^k). \end{alignat*}
Let $W = \fmat{cc} U & Z \\ 0 & V \emat \in \Omega^{(j)}_{s_jp_j}$ where $p_j = n_j + q_j$. Let $S \in \R^{s_jp_j \times s_jp_j}$ have the form
$S = \fmat{cc} I_{s_jn_j} & 0 \\ 0 & 0 \emat.$
Then, when $j = 0$,
\begin{multline*}
\fmat{cc} I_{s_jn_j} & 0 \\ 0 & 0 \emat f\(\fmat{cc} U & Z \\ 0 & V \emat, X^1, \ldots, X^k\)\(\fmat{c} 0 \\ Z^1 \emat, Z^2, \ldots, Z^k\) \\
 - f(I_{p_0} \otimes Y^0, X^1, \ldots, X^k)\(\fmat{cc} I_{s_jn_j} & 0 \\ 0 & 0 \emat \fmat{c} 0 \\ Z^1 \emat, Z^2, \ldots, Z^k\) 
\\
= \fmat{cc} I_{s_jn_j} & 0 \\ 0 & 0 \emat \fmat{c} f(U, X^1, \ldots, X^k)(0, Z^2, \ldots, Z^k) + {_0\Delta} f(U, V, X^1, \ldots, X^k)(Z, Z^1, Z^2, \ldots, Z^k) \\ f(V, X^1, \ldots, X^k)(Z^1, Z^2, \ldots, Z^k) \emat 
\end{multline*}
\begin{multline*}
 - \fmat{c} f(I_{n_0} \otimes Y^0, X^1, \ldots, X^k)\(0, Z^2, \ldots, Z^k\) \\ 0 \emat \\
= \fmat{c} {_0\Delta} f(U, V, X^1, \ldots, X^k)(Z, Z^1, \ldots, Z^k) \\ 0 \emat. \end{multline*}
At the same time,
\begin{alignat*}{2}
&\fmat{cc} I_{s_jn_j} & 0 \\ 0 & 0 \emat \fmat{cc} U & Z \\ 0 & V \emat - \fmat{cc} I_{n_j} \otimes Y^j & 0 \\ 0 & I_{q_j} \otimes Y^j \emat \fmat{cc} I_{s_jn_j} & 0 \\ 0 & 0 \emat 
= \fmat{cc} U - I_{n_j} \otimes Y^j & Z \\ 0 & 0 \emat. \end{alignat*}
Plugging this into the function $F_0$ gives
\begin{alignat*}{2}
F_0&\(I_{p_0} \otimes Y^0, \fmat{cc} U & Z \\ 0 & V \emat, X^1, \ldots, X^k\)\(\fmat{cc} U - I_{n_j} \otimes Y^j & Z \\ 0 & 0 \emat, \fmat{c} 0 \\ Z^1 \emat, Z^2, \ldots, Z^k\) \\
&= F_0\(I_{p_0} \otimes Y^0, U, X^1, \ldots, X^k\)\(\fmat{c} U - I_{n_j} \otimes Y^j \\ 0 \emat, 0, Z^2, \ldots, Z^k\) \\
&\hspace{5mm} + {_1\Delta} F_0(I_{p_0} \otimes Y^0, U, V, X^1, \ldots, X^k)\(\fmat{c} U - I_{n_j} \otimes Y^j \\ 0 \emat, Z, Z^1, \ldots, Z^k\) \\
&\hspace{5mm} + F_0\(I_{p_0} \otimes Y^0, V, X^1, \ldots, X^k\)\(\fmat{c} Z \\ 0 \emat, Z^1, \ldots, Z^k\) \\
&= \fmat{c} {_1\Delta} F_0(I_{n_0} \otimes Y^0, U, V, X^1, \ldots, X^k)\(U - I_{n_j} \otimes Y^j, Z, Z^1, \ldots, Z^k\) \\ 0 \emat \\
&\hspace{5mm} + \fmat{c} F_0\(I_{n_0} \otimes Y^0, V, X^1, \ldots, X^k\)\(Z, Z^1, \ldots, Z^k\) \\ 0 \emat\\
&\hspace{15mm} =\fmat{c} F_0(U, V, X^1, \ldots, X^k)(Z, Z^1, \ldots, Z^k) \\ 0 \emat. 
 \end{alignat*}
Thus
\begin{alignat*}{2}
S&f\(W, X^1, \ldots, X^k\)\(\fmat{c} 0 \\ Z^1 \emat, Z^2, \ldots, Z^k\) - f(I_{p_0} \otimes Y^0, X^1, \ldots, X^k)\(S\fmat{c} 0 \\ Z^1 \emat, Z^2, \ldots, Z^k\) \\
&\hspace{30mm}= F_0(I_{p_0} \otimes Y^0, W, X^1, \ldots, X^k)\(SW - (I_{p_j} \otimes Y^j)S, \fmat{c} 0 \\ Z^1 \emat, Z^2, \ldots, Z^k\) 
\end{alignat*}
can be written as
\begin{alignat*}{2}
&\fmat{c} {_0\Delta} f(U, V, X^1, \ldots, X^k)(Z, Z^1, \ldots, Z^k) \\ 0 \emat = \fmat{c} F_0(U, V, X^1, \ldots, X^k)(Z, Z^1, \ldots, Z^k) \\ 0 \emat. 
\end{alignat*}
Focusing on the top entries gives
\begin{alignat*}{2}
{_0\Delta} f&(U, V, X^1, \ldots, X^k)(Z, Z^1, \ldots, Z^k) = F_0(U, V, X^1, \ldots, X^k)(Z, Z^1, \ldots, Z^k). \end{alignat*}

When $j > 0$, the proof is similar. Finally, we know that $f, F_0, \ldots, F_k$ all have unique nc extensions to $\Omega^{(0)}_{d.s.e.} \times \hdots \times \Omega^{(k)}_{d.s.e.}$ by Proposition 9.3 in \cite{Verb}. It needs to be shown that on this larger set, ${_j\Delta} f = F_j$ for $j = 0, \ldots, k$. 

When $j = 0$, suppose by contradiction that for some $U \in \Omega^{(0)}_r, V \in \Omega^{(0)}_t$ and $Z \in \R^{r \times t}$, $${_0\Delta} f(U, V, X^1, \ldots, X^k)(Z, Z^1, \ldots, Z^k) \ne F(U, V, X^1, \ldots, X^k)(Z, Z^1, \ldots, Z^k).$$ For some integers $p_0$ and $q_0$, $U' = I_{p_0} \otimes U$ and $V' = I_{q_0} \otimes V$ are in $\Omega^{(0)}_{d.s.e.}$. Given $E$ as a $p_0 \times q_0$ matrix of all ones and $e$ as a $q_0 \times 1$ matrix of ones, we would have that for $X^j \in \Omega^{(j)}_{s_j}$ for $j = 1, \ldots, k$, $Z^1 \in \R^{t \times s_1}$ and $Z^j \in \R^{s_{j - 1} \times s_j}$ for $j = 2, \ldots, k$,
$${_0\Delta} f(U', V', X^1, \ldots, X^k)(E \otimes Z, e\otimes Z^1, \ldots, Z^k) \ne F(U', V', X^1, \ldots, X^k)(E \otimes Z, e\otimes Z^1, \ldots, Z^k).$$
But this is not the case. Hence ${_0\Delta} f = F_0$ on  $\Omega^{(0)}_{d.s.e.} \times \Omega_{s_1}^{(1)}\times \hdots \times \Omega_{s_k}^{(k)}$, and by extension, on $\Omega^{(0)}_{d.s.e.} \times \hdots \times \Omega^{(k)}_{d.s.e.}$. The proof for $j > 0$ is similar. Thus
${_j\Delta} f = F_j$ for $j = 0, \ldots, k$ on
$\Omega^{(0)}_{d.s.e.} \times \hdots \times \Omega^{(k)}_{d.s.e.}$. The result clearly holds on the smaller set $\Omega^{(0)} \times \hdots \times \Omega^{(k)}$. 
\end{proof}

The following corollary will be useful in later sections.

\begin{corollary}\label{nicer form}
Under the assumptions of Theorem \ref{almost result again}, the linear function $G(Z^1, \ldots, Z^k)$ can be written as
$$G(Z^1, \ldots, Z^k) = f(I_{m_0} \otimes Y^0, \ldots, I_{m_k} \otimes Y^k)(Z^1, \ldots, Z^k),$$
and the formula for the antiderivative $f$ can be written as
\begin{multline}\label{actual nicer form}
f(X^0, \ldots, X^k)(Z^1, \ldots, Z^k) 
= f(I_{m_0} \otimes Y^0, \ldots, I_{m_k} \otimes Y^k)(Z^1, \ldots, Z^k) \\
+ \sum_{j = 0}^k F_j(I_{m_0} \otimes Y^0, \ldots, I_{m_j} \otimes Y^j,X^j, \ldots, X^k)(Z^1, \ldots, Z^j, X^j - I_{m_j} \otimes Y^j, Z^{j + 1}, \ldots, Z^k)
\end{multline}
\end{corollary}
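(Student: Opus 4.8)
The plan is to evaluate the formula \eqref{higher-f} for $f$ at the amplified point $(I_{m_0} \otimes Y^0, \ldots, I_{m_k} \otimes Y^k)$. First I would check that this is a legitimate input: since $Y^j \in \Omega^{(j)}_{s_j}$ and $\Omega^{(j)}$ is closed under direct sums, $I_{m_j} \otimes Y^j = Y^j \oplus \cdots \oplus Y^j \in \Omega^{(j)}_{s_j m_j}$, which is exactly the domain on which \eqref{higher-f} was established in Theorem \ref{almost result again}. So the tuple $(I_{m_0} \otimes Y^0, \ldots, I_{m_k} \otimes Y^k)$ is an admissible argument for the constructed function $f$.

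Next, observe that with $X^j = I_{m_j} \otimes Y^j$ for every $j = 0, \ldots, k$, the argument $X^j - I_{m_j} \otimes Y^j$ occupying the $(j+1)$-st slot of $F_j$ in the $j$-th summand of \eqref{higher-f} becomes the zero matrix. Since each $F_j \in \T^{k+1}(\cdots)$ is by definition multilinear in its matrix-coefficient arguments — in particular linear in the argument filling that slot — every term of the sum $\sum_{j=0}^k$ vanishes. Hence \eqref{higher-f} collapses to $f(I_{m_0} \otimes Y^0, \ldots, I_{m_k} \otimes Y^k)(Z^1, \ldots, Z^k) = G(Z^1, \ldots, Z^k)$, which is the first assertion of the corollary.

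Finally, substituting this identity for $G$ back into the right-hand side of \eqref{higher-f} produces \eqref{actual nicer form} verbatim. I do not expect any genuine obstacle here: the only two points needing attention are that the amplified tuple lies in the domain and that the $F_j$ are linear in the relevant slot, both of which are immediate from the setup of Theorem \ref{almost result again}. As a consistency check one can note that the right-hand side of \eqref{actual nicer form} is then manifestly independent of the auxiliary data $s_j, Y^j$ (as it must be, being equal to $f(X^0, \ldots, X^k)(Z^1, \ldots, Z^k)$), which also re-expresses $G$ intrinsically in terms of $f$ rather than in terms of the scalar-level datum $g$.
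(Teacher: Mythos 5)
Your proof is correct and is exactly the immediate argument the paper leaves implicit (the corollary is stated without proof): setting $X^j = I_{m_j}\otimes Y^j$ in \eqref{higher-f} kills every summand by linearity of $F_j$ in the slot occupied by $X^j - I_{m_j}\otimes Y^j$, identifying $G$ with $f$ at the amplified point, and back-substitution gives \eqref{actual nicer form}. Both of the points you flag --- admissibility of the amplified tuple and linearity in the relevant slot --- are indeed the only things to check, and both follow directly from the setup of Theorem \ref{almost result again}.
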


For the proof of Theorem \ref{higher-main}, it needs to be shown that a multilinear form (understood as a homomorphism on a tensor product) $g$ as in Theorem \ref{almost result again} always exists.
\begin{proposition}\label{bimodule}
Let $\N_0, \ldots, \N_k$ be $\R$-modules and $\mathcal{K} = \hom_R(\N_1^{s_0 \times s_1} \otimes \hdots \otimes \N_k^{s_{k - 1} \times s_k}, \N_0^{s_0 \times s_k})$. Then:
\begin{enumerate}
\item $\mathcal{K}$ is a bimodule over $\R^{s_0 \times s_0}$ with left and right actions defined by
\begin{alignat*}{2}
(S \cdot X)(Z^1, \ldots, Z^k) &= SX(Z^1, \ldots, Z^k), \\
(X \cdot S)(Z^1,  \ldots, Z^k) &= X(SZ^1, Z^2, \ldots, Z^k), \end{alignat*}
where $S \in \R^{s_0 \times s_0}, X \in \mathcal{K}$ and $Z^i \in \N_i^{s_{i - 1} \times s_i}$ for $i = 1, \ldots, k$. 
\item If $0 < j < k$, then $\mathcal{K}$ is a bimodule over $\R^{s_j \times s_j}$ with left and right actions defined by
\begin{alignat*}{2}
(S \cdot X)(Z^1,\ldots, Z^k) &= X(Z^1, \ldots, Z^{j - 1}, Z^jS, Z^{j + 1}, \ldots, Z^k) \\
(X \cdot S)(Z^1,\ldots,Z^k) &= X(Z^1, \ldots, Z^j, SZ^{j + 1}, Z^{j + 2}, \ldots, Z^k) \end{alignat*}
where $S \in \R^{s_j \times s_j}, X \in \mathcal{K}$ and $Z^i \in \N_i^{s_{i - 1} \times s_i}$ for $i = 1, \ldots, k$. 
\item $\mathcal{K}$ is a bimodule over $\R^{s_k \times s_k}$ with left and right actions defined by
\begin{alignat*}{2}
(S \cdot X)(Z^1, \ldots, Z^k) &= X(Z^1, \ldots, Z^{k - 1}, Z^kS) \\
(X \cdot S)(Z^1, \ldots, Z^k) &= X(Z^1, \ldots, Z^k)S \end{alignat*}
where $S \in \R^{s_k \times s_k}, X \in \mathcal{K}$ and $Z^i \in \N_i^{s_{i - 1} \times s_i}$ for $i = 1, \ldots, k$. 
\end{enumerate}
\end{proposition}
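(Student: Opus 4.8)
The plan is to verify directly, case by case, that the prescribed operations make $\mathcal{K}$ into a bimodule over $\R^{s_j\times s_j}$ for the relevant value of $j$: one checks that each action is well-defined (sends $\mathcal{K}$ into $\mathcal{K}$), that it is additive in both arguments with $I_{s_j}$ acting as the identity, that the left action satisfies $(ST)\cdot X=S\cdot(T\cdot X)$ and the right action satisfies $(X\cdot S)\cdot T=X\cdot(ST)$, and finally that the two actions are compatible, $(S\cdot X)\cdot T=S\cdot(X\cdot T)$ for $S,T\in\R^{s_j\times s_j}$, $X\in\mathcal{K}$.

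First I would record well-definedness. In each of (a), (b), (c) the new map is obtained from $X$ by precomposing with a fixed $\R$-linear endomorphism of one tensor factor (of the form $Z^i\mapsto SZ^i$ or $Z^i\mapsto Z^iS$ for the appropriate $i$) and/or postcomposing with left or right multiplication by $S$ on $\N_0^{s_0\times s_k}$. Since $\R$ is commutative, multiplication of matrices over $\R$ against matrices over an $\R$-module is $\R$-bilinear, so these are $\R$-module maps; hence the composite is again $\R$-multilinear, i.e.\ an element of $\hom_\R(\N_1^{s_0\times s_1}\otimes\cdots\otimes\N_k^{s_{k-1}\times s_k},\N_0^{s_0\times s_k})$. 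Additivity in $X$ and in $S$ is immediate from the pointwise $\R$-module structure on $\mathcal{K}$ together with the distributive law, and the identity matrix clearly acts trivially.

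Next I would check the module axioms, which in all three cases reduce to associativity of matrix multiplication applied in the appropriate slot. For instance, in case (a), $\bigl((ST)\cdot X\bigr)(Z^1,\ldots,Z^k)=(ST)X(Z^1,\ldots,Z^k)=S\bigl(TX(Z^1,\ldots,Z^k)\bigr)=\bigl(S\cdot(T\cdot X)\bigr)(Z^1,\ldots,Z^k)$, and $\bigl((X\cdot S)\cdot T\bigr)(Z^1,\ldots,Z^k)=X\bigl(S(TZ^1),Z^2,\ldots,Z^k\bigr)=X\bigl((ST)Z^1,Z^2,\ldots,Z^k\bigr)$. Cases (b) and (c) are the same in spirit, with associativity applied to $Z^j(ST)$ and $(ST)Z^{j+1}$ in (b), and to right multiplication of $X(Z^1,\ldots,Z^k)$ by $S$ then $T$ in (c). Finally, compatibility of the left and right actions holds because in each case the two actions modify \emph{disjoint} pieces of data: in (a) the left action multiplies the output on the left while the right action rescales the first input argument; in (b) the left action multiplies $Z^j$ on the right while the right action multiplies $Z^{j+1}$ on the left; in (c) the left action multiplies $Z^k$ on the right while the right action multiplies the output on the right. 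In all three situations the operations commute trivially — acting on independent slots, or (in (c)) because left and right multiplication of a fixed matrix commute — e.g.\ in (b), $\bigl((S\cdot X)\cdot T\bigr)(Z^1,\ldots,Z^k)=(S\cdot X)(Z^1,\ldots,Z^j,TZ^{j+1},\ldots,Z^k)=X(Z^1,\ldots,Z^jS,TZ^{j+1},\ldots,Z^k)=\bigl(S\cdot(X\cdot T)\bigr)(Z^1,\ldots,Z^k)$.

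I do not expect any genuine obstacle here: the whole statement is bookkeeping, and the only point needing a moment's care is confirming in case (b) that inserting $S$ into argument $j$ and into argument $j+1$ are operations on two different arguments — which is exactly where the hypothesis $0<j<k$ is used, so that there is an argument on each side of the $j$-th to absorb the left and right actions.
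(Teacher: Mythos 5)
Your verification is correct and is exactly the routine check the paper has in mind — the paper itself dismisses this proposition with ``The proof is straightforward,'' and your case-by-case confirmation of well-definedness, the two associativity axioms, and the compatibility $(S\cdot X)\cdot T=S\cdot(X\cdot T)$ supplies precisely the omitted bookkeeping, including the correct observation of where $0<j<k$ is needed in case (b).
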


The proof is straightforward. With this proposition in mind, we make the following definitions.
\begin{definition} 
\begin{enumerate}
\item Define ${_0D}\colon \R^{s_0 \times s_0} \to \mathcal{K}$ by
\begin{alignat*}{2}
{_0D}(S) &= F_0(Y^0, Y^0, Y^1, \ldots, Y^k)(SY^0 - Y^0S), \end{alignat*}
where 
\begin{alignat*}{2}
& F_0(Y^0, Y^0, Y^1, \ldots, Y^k)(SY^0 - Y^0S)(Z^1, \ldots, Z^k) \\
&\hspace{30mm}= F_0(Y^0, Y^0, Y^1, \ldots, Y^k)(SY^0 - Y^0S, Z^1, \ldots, Z^k). \end{alignat*}
\item For $0< j < k$, define ${_jD}\colon \R^{s_j \times s_j} \to \mathcal{K}$ by
\begin{alignat*}{2}
{_jD}(S) &= F_j(Y^0, \ldots, Y^{j - 1}, Y^j, Y^j, Y^{j + 1}, \ldots, Y^k)(SY^j - Y^jS), \end{alignat*}
where 
\begin{alignat*}{2}
&F_j(Y^0, \ldots, Y^{j - 1}, Y^j, Y^j, Y^{j + 1}, \ldots, Y^k)(SY^j - Y^jS)(Z^1,\ldots, Z^k) \\
&\hspace{10mm}= F_j(Y^0, \ldots, Y^{j - 1}, Y^j, Y^j, Y^{j + 1}, \ldots, Y^k)(Z^1, \ldots, Z^j, SY^j - Y^jS, Z^{j + 1}, \ldots, Z^k). \end{alignat*}
\item Define ${_kD}: \R^{s_k \times s_k} \to \mathcal{K}$ by
\begin{alignat*}{2}
{_kD}(S) &= F_k(Y^0, \ldots, Y^{k - 1}, Y^k, Y^k)(SY^k - Y^kS), \end{alignat*}
where 
\begin{alignat*}{2}
& F_k(Y^0, \ldots, Y^{k - 1}, Y^k, Y^k)(SY^k - Y^kS)(Z^1, \ldots,  Z^k) \\
&\hspace{30mm}= F_k(Y^0, \ldots, Y^{k - 1}, Y^k, Y^k)(Z^1, \ldots, Z^k, SY^k - Y^kS). \end{alignat*}
\end{enumerate}
\end{definition}

\begin{proposition}\label{higher-derivations}
Let $\Omega^{(i)} \subseteq \M_{i, \nc}$ be right admissible nc sets, and let $Y^i \in \Omega^{(i)}_{s_i}$, $i=0,\ldots,k$. Suppose that, for some $j\in\{0,\ldots,k\}$, $$F_j \in \T^{k + 1}(\Omega^{(0)}, \ldots, \Omega^{(j - 1)}, \Omega^{(j)}, \Omega^{(j)}, \Omega^{(j + 1)}, \ldots, \Omega^{(k)}; \N_{0, \nc}, \ldots, \N_{j, \nc}, \M_{j, \nc}, \N_{j + 1, \nc}, \ldots, \N_{k, \nc}),$$ and 
$${_j\Delta} F_j(Y^0,\ldots,Y^{j-1},Y^j,Y^j,Y^j,Y^{j+1},\ldots,Y^k) = {_{j + 1}\Delta}F_j(Y^0,\ldots,Y^{j-1},Y^j,Y^j,Y^j,Y^{j+1},\ldots,Y^k).$$
 Then ${_jD}$ is a derivation on the algebra $R^{s_j \times s_j}$ with values in $\mathcal{K}$, and hence a Lie-algebra derivation.
\end{proposition}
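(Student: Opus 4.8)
The plan is to mimic, almost line for line, the proof of Proposition \ref{derivationprop}, which is the case $k=0$ of the present statement. Since ${_jD}$ is visibly $\R$-linear and a derivation of an associative algebra into a bimodule is automatically a Lie-algebra derivation, it suffices to verify the Leibniz rule
$$S\cdot{_jD}(T)+{_jD}(S)\cdot T={_jD}(ST),\qquad S,T\in\R^{s_j\times s_j},$$
with respect to the $\R^{s_j\times s_j}$-bimodule structure on $\mathcal{K}$ supplied by Proposition \ref{bimodule}. First I would evaluate both sides of this identity on an arbitrary tuple $(Z^1,\ldots,Z^k)$ and substitute the definition of ${_jD}$. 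Writing $\Phi$ for the $(k+1)$-linear form $F_j(Y^0,\ldots,Y^{j-1},Y^j,Y^j,Y^{j+1},\ldots,Y^k)$, the identity becomes a relation among three values of $\Phi$: in each of the three cases $j=0$, $0<j<k$, $j=k$, the commutator $TY^j-Y^jT$, $SY^j-Y^jS$, or $STY^j-Y^jST$ occupies the $(j+1)$-st linear slot of $F_j$, while, according to which side of the bimodule action is applied, an extra factor $S$ is attached on the right of the content of slot $j$ (or, when $j=0$, multiplies the output on the left) and an extra factor $T$ on the left of the content of slot $j+2$ (or, when $j=k$, multiplies the output on the right).

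Next I would apply to each of the two summands on the left-hand side the appropriate difference formula among (3.40)--(3.46) of \cite{Verb}: one ``commutes'' the factor $S$ past the first of the two copies of $Y^j$ sitting in the point-arguments of $F_j$, the other ``commutes'' the factor $T$ past the second. This rewrites the left-hand side as
$$\Phi(\ldots,S(TY^j-Y^jT),\ldots)+\Phi(\ldots,(SY^j-Y^jS)T,\ldots)+{_j\Delta}F_j(\ldots)(\ldots)-{_{j+1}\Delta}F_j(\ldots)(\ldots),$$
where both $\Delta$-terms are evaluated at the point $(Y^0,\ldots,Y^{j-1},Y^j,Y^j,Y^j,Y^{j+1},\ldots,Y^k)$ with three copies of $Y^j$ and are applied to the very same tuple of linear arguments (the one in which slot $j$ holds $Z^j$, the two middle slots hold $SY^j-Y^jS$ and $TY^j-Y^jT$, and the remaining slots hold $Z^{j+1},\ldots,Z^k$); hence, by the hypothesis ${_j\Delta}F_j={_{j+1}\Delta}F_j$ at that point, the two $\Delta$-terms cancel. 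Finally, using linearity of $\Phi$ in the $(j+1)$-st slot together with the elementary identity $S(TY^j-Y^jT)+(SY^j-Y^jS)T=STY^j-Y^jST$, the two surviving $\Phi$-terms combine into $\Phi(\ldots,STY^j-Y^jST,\ldots)={_jD}(ST)(Z^1,\ldots,Z^k)$, which is exactly the required right-hand side. (In the application to Theorem \ref{higher-main}, the hypothesis ${_j\Delta}F_j={_{j+1}\Delta}F_j$ will be the $i=j$ instance of the integrability condition ${_i\Delta}F_j={_{j+1}\Delta}F_i$.)

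The only real work is bookkeeping. For $j=0$ and $j=k$ the relevant bimodule action is multiplication of the \emph{output} of $\Phi$ by $S$ (respectively by $T$), so the situation is literally the one already treated in Proposition \ref{derivationprop}, and the correct difference formula is the left-most (respectively right-most) one. For $0<j<k$ the two bimodule actions act instead on the \emph{interior} linear slots $Z^j$ and $Z^{j+1}$, so one must use the ``middle-slot'' difference formula, and one must keep straight which of the $k+3$ point-arguments of ${_j\Delta}F_j$ and of ${_{j+1}\Delta}F_j$ receives which block. I expect this index-chasing --- and nothing conceptual --- to be the main obstacle; no idea beyond the one underlying Proposition \ref{derivationprop} is needed.
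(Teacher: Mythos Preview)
Your proposal is correct and follows essentially the same route as the paper: the paper's proof of Proposition~\ref{higher-derivations} simply says ``The proof is essentially the same as the proof of Proposition~\ref{derivationprop}, with just basic changes to adapt to the higher order nc functions involved,'' and your write-up spells out precisely those adaptations.
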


The proof is essentially the same as the proof of Proposition \ref{derivationprop}, with just basic changes to adapt to the higher order nc functions involved.

\begin{proposition}\label{inner derivations}
In the assumptions of Proposition \ref{higher-derivations}, if $A, B, C \in \R^{s_j \times s_j}$ satisfy
$$AB = \lambda A \text{ and } BC = \lambda C \text{ for some } \lambda \in \R,$$
then
$$A\cdot{_jD}(B)\cdot C =0.$$
 In particular, if $A=B=C=P$ and $P^2=P$, then
$$P\cdot{_jD}(P)\cdot P =0.$$
\end{proposition}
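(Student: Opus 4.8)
The plan is to reproduce the proof of Proposition~\ref{makingzero} essentially verbatim, after relabelling indices and carrying the extra ``spectator'' arguments along. The first step is to unwind the statement. By Proposition~\ref{bimodule} and the definition of ${_jD}$, evaluating $A\cdot{_jD}(B)\cdot C\in\mathcal{K}$ on an arbitrary tuple $(Z^1,\ldots,Z^k)$ of compatibly sized matrices yields, when $0<j<k$, the expression
$$F_j(Y^0,\ldots,Y^{j-1},Y^j,Y^j,Y^{j+1},\ldots,Y^k)(Z^1,\ldots,Z^{j-1},Z^jA,\;BY^j-Y^jB,\;CZ^{j+1},Z^{j+2},\ldots,Z^k),$$
while for $j=0$ (respectively $j=k$) the factor $A$ (respectively $C$) becomes instead an outer left (respectively right) matrix multiplication of the value, $BY^j-Y^jB$ being the first (respectively last) input slot of $F_j$. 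In all cases this is structurally identical to the quantity $AF(Y,Y)(BY-YB)C$ treated in Proposition~\ref{makingzero}: the role of the order-$1$ function $F$ is played by $F_j$, the role of its single variable by the consecutive pair of $\Omega^{(j)}$-slots of $F_j$ (positions $j$ and $j+1$), $BY^j-Y^jB$ occupies the input slot between those two positions, and $A$, $C$ act on the input slots flanking them; the remaining data $Y^0,\ldots,Y^{j-1},Y^{j+1},\ldots,Y^k$ and $Z^1,\ldots,Z^{j-1},Z^{j+1},\ldots,Z^k$ is fixed throughout and merely rides along. It therefore suffices to show that the displayed expression vanishes for every $(Z^1,\ldots,Z^k)$.

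The computation is then the one in the proof of Proposition~\ref{makingzero}, carried out line for line with ${_0\Delta},{_1\Delta},{_2\Delta}$ replaced everywhere by ${_j\Delta},{_{j+1}\Delta},{_{j+2}\Delta}$ (acting on the $\Omega^{(j)}$-slots) and with the spectator arguments inserted in each intermediate expression. Concretely, one uses the difference formulas (3.40)--(3.46) of \cite{Verb} to move the factors $A$ and $C$ onto the commutator slot, which produces ${_j\Delta}F_j$- and ${_{j+1}\Delta}F_j$-terms; the relations $AB=\lambda A$ and $BC=\lambda C$ are then used to annihilate the ``diagonal'' terms of the shape $F_j(\cdots)(\cdots,\lambda(\,\cdot\,)-\lambda(\,\cdot\,),\cdots)$; and the surviving ${_j\Delta}F_j$-, ${_{j+1}\Delta}F_j$-, ${_{j+1}\Delta}\,{_j\Delta}F_j$- and ${_{j+2}\Delta}\,{_j\Delta}F_j$-terms are rearranged, using that multiplications on distinct input slots commute, until everything collapses to the difference ${_{j+1}\Delta}\,{_j\Delta}F_j-{_{j+2}\Delta}\,{_j\Delta}F_j$ (with all the $\Omega^{(j)}$-slots set to $Y^j$, and the spectator arguments inserted). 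The hypothesis ${_j\Delta}F_j={_{j+1}\Delta}F_j$ of Proposition~\ref{higher-derivations} rewrites this as ${_{j+1}\Delta}\,{_{j+1}\Delta}F_j-{_{j+2}\Delta}\,{_{j+1}\Delta}F_j$, which is $0$ by Proposition~\ref{deltas}. Hence $A\cdot{_jD}(B)\cdot C=0$; the ``in particular'' statement follows by taking $A=B=C=P$ with $P^2=P$, so that $AB=P=1\cdot A$ and $BC=P=1\cdot C$, i.e.\ $\lambda=1$.

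I expect the main obstacle to be purely organisational. The chain of identities is long, and at each step one must verify that the difference formula from \cite{Verb} is applied to the correct slot---positions $j$ and $j+1$, and position $j+2$ once it has been created---that the spectator matrix- and $Z$-arguments remain untouched, and that the factors $A$, $C$ enter only through the two input slots adjacent to positions $j$ and $j+1$ (degenerating to an outer left or right multiplication of the value when $j=0$ or $j=k$, exactly as in Proposition~\ref{makingzero}). There is no conceptual novelty: the single place where the hypothesis of Proposition~\ref{higher-derivations} is invoked---rewriting the terminal ${_j\Delta}F_j$-terms as ${_{j+1}\Delta}F_j$-terms---is handled exactly as the analogous step in Proposition~\ref{makingzero}, after which Proposition~\ref{deltas} closes the argument.
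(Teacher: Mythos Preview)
Your proposal is correct and matches the paper's approach exactly: the paper's own proof consists of the single sentence ``The proof is essentially the same as that of Proposition~\ref{makingzero}, with minor changes for the higher order functions involved,'' and you have spelled out precisely what those changes are. Your identification of the bimodule action from Proposition~\ref{bimodule}, the index shift ${_0\Delta},{_1\Delta},{_2\Delta}\rightsquigarrow{_j\Delta},{_{j+1}\Delta},{_{j+2}\Delta}$, the treatment of the boundary cases $j=0,k$, and the terminal appeal to Proposition~\ref{deltas} are all correct.
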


The proof  is essentially the same as that of Proposition \ref{makingzero}, with minor changes for the higher order functions involved.

\begin{corollary}\label{inner derivation again}
In the assumptions of Proposition \ref{higher-derivations},
 ${_jD}$ is an inner derivation. 
\end{corollary}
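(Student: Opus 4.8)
The plan is to transcribe the proof of Corollary~\ref{inner derivation} (the order-zero case $k=0$), since each ingredient used there now has a higher-order counterpart. By Proposition~\ref{higher-derivations}, ${_jD}\colon\R^{s_j\times s_j}\to\mathcal{K}$ is a Lie-algebra derivation, and by the second assertion of Theorem~\ref{derivation properties} such a derivation is inner precisely when its ``diagonal of the diagonal'' components vanish, i.e. ${({_jD})}^{ii}_{kk}=0$ for all $i,k=1,\ldots,s_j$, in which case the implementing element is given by \eqref{definition inner derivation} with $D$ replaced by ${_jD}$. So the corollary reduces to this vanishing, which I would establish in two steps. First, since the matrix unit $E_{ii}\in\R^{s_j\times s_j}$ is idempotent, Proposition~\ref{inner derivations} applied with $A=B=C=P=E_{ii}$ gives $E_{ii}\cdot{_jD}(E_{ii})\cdot E_{ii}=0$, which is exactly ${({_jD})}^{ii}_{ii}=0$. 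Second, the first assertion of Theorem~\ref{derivation properties} upgrades this to ${({_jD})}^{ii}_{kk}={({_jD})}^{ii}_{ii}=0$ for all $i,k$.

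The one point needing genuine care is that Theorem~\ref{derivation properties}, and the ``entry'' notation ${({_jD})}^{ij}_{kl}$, are phrased for derivations valued in a bimodule of the shape $\N^{s\times s}$, whereas ${_jD}$ takes values in $\mathcal{K}=\hom_{\R}(\N_1^{s_0\times s_1}\otimes\cdots\otimes\N_k^{s_{k-1}\times s_k},\N_0^{s_0\times s_k})$. I would close this by noting from Proposition~\ref{bimodule} that the $\R^{s_j\times s_j}$-bimodule $\mathcal{K}$ is unital, and then invoking the standard structure of a unital bimodule over a matrix ring: with $\mathcal{L}=E_{11}\cdot\mathcal{K}\cdot E_{11}$, which is an $\R$-module, the map $X\mapsto(E_{1a}\cdot X\cdot E_{b1})_{a,b}$ is an $\R^{s_j\times s_j}$-bimodule isomorphism from $\mathcal{K}$ onto the matrix bimodule $\mathcal{L}^{s_j\times s_j}$ with its standard left and right multiplications. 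Concretely, $\mathcal{L}$ is the same kind of hom-space as $\mathcal{K}$ but with the size $s_j$ shrunk to $1$ in the appropriate tensor factors, namely the output for $j=0$ and for $j=k$, and the adjacent arguments $Z^j$ and $Z^{j+1}$ for $0<j<k$, matching the three cases of Proposition~\ref{bimodule}. After this identification, Theorem~\ref{derivation properties} applies to ${_jD}$ verbatim.

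I expect the only real obstacle to be this bimodule bookkeeping: one must check that the ``entries'' ${({_jD})}^{ii}_{kk}$ are read off consistently with the left and right $\R^{s_j\times s_j}$-actions, which act on different tensor slots in the three cases. This requires no new computation, only a direct verification of the bimodule axioms under the isomorphism above; once it is in place, the conclusion follows exactly as in Corollary~\ref{inner derivation}.
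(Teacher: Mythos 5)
Your proof is correct and follows essentially the same route as the paper, whose entire argument is the single sentence that the corollary ``follows immediately from Theorem \ref{derivation properties} and Proposition \ref{inner derivations}'' (mirroring the proof of Corollary \ref{inner derivation}, where $P=E_{ii}$ kills the diagonal entries and the first part of Theorem \ref{derivation properties} propagates the vanishing). The bimodule bookkeeping you flag --- identifying $\mathcal{K}$ with $\mathcal{L}^{s_j\times s_j}$ so that Theorem \ref{derivation properties} applies verbatim --- is exactly the step the paper leaves implicit, and your treatment of it is sound.
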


This follows immediately from Theorem \ref{derivation properties} and Proposition \ref{inner derivations}.

Thus, we know that each of order $k + 1$ nc functions $F_0, \ldots, F_k$ can be written as the commutator of some order $k$ nc function, $f_0, \ldots, f_k$ respectively. It now needs to be shown that the functions $f_0, \ldots, f_k$ can be chosen equal to one another. We first prove the following proposition.

\begin{proposition}\label{with zero functions}
In the assumptions of Proposition \ref{higher-derivations}, the inner derivation ${_jD}$ can be defined by
${_jD}(S)=[S, g_j]$ with the $k$-linear map $g_j \in \mathcal{K}$ given by
\begin{equation}\label{gj}
g_j=  -\sum_{i=1}^{s_j} {_jD}(E_{i1})\cdot E_{1i}.
  \end{equation}
	 In other words, $g_j$ satisfies the corresponding $j$-th identity in  \eqref{defining formula j equals 0}--\eqref{defining formula j equals k} where $g$ is replaced by $g_j$, $j\in\{0,\ldots,k\}$.
\end{proposition}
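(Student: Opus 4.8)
The plan is to bypass the general inner-derivation machinery and to verify by hand that $\phi_j(S):=S\cdot g_j-g_j\cdot S$, where $\cdot$ denotes the bimodule actions of $\R^{s_j\times s_j}$ on $\mathcal K$ supplied by Proposition \ref{bimodule}, coincides with ${_jD}(S)$ for every $S\in\R^{s_j\times s_j}$. Once this identity is in hand, rewriting $S\cdot g_j-g_j\cdot S$ through the concrete left and right actions of Proposition \ref{bimodule} is, word for word, the $j$-th of the identities \eqref{defining formula j equals 0}--\eqref{defining formula j equals k} with $g$ replaced by $g_j$; that is the ``in other words'' clause of the statement. For this route I need only that ${_jD}$ is an algebra derivation (Proposition \ref{higher-derivations}) and the bimodule axioms --- Corollary \ref{inner derivation again} is not used, and in fact the argument reproves it.

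I would first record two generalities. For any bimodule $\mathcal M$ over a ring $\mathcal A$ and any fixed $g\in\mathcal M$, the map $S\mapsto S\cdot g-g\cdot S$ is an $\mathcal A$-algebra derivation into $\mathcal M$, by the usual one-line Leibniz check; in particular $\phi_j$ is an algebra derivation. Second, since $E_{pq}=E_{p1}E_{1q}$ and $\sum_{p=1}^{s_j}E_{pp}=I$, the family $\{E_{p1}\}_{p=1}^{s_j}\cup\{E_{1q}\}_{q=1}^{s_j}$ generates $\R^{s_j\times s_j}$ as a unital $\R$-algebra; hence two algebra derivations on $\R^{s_j\times s_j}$ that agree on this family agree everywhere, by the Leibniz rule together with $\R$-linearity. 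It therefore suffices to check $\phi_j(E_{p1})={_jD}(E_{p1})$ and $\phi_j(E_{1q})={_jD}(E_{1q})$ for all $p,q\in\{1,\ldots,s_j\}$.

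These two checks are the heart of the argument, and both are short. In each case I would substitute $g_j=-\sum_{i=1}^{s_j}{_jD}(E_{i1})\cdot E_{1i}$, pull the outer matrix unit inside each summand via the Leibniz rule for ${_jD}$ (for instance $E_{p1}\cdot{_jD}(E_{i1})={_jD}(E_{p1}E_{i1})-{_jD}(E_{p1})\cdot E_{i1}=\delta_{1i}\,{_jD}(E_{p1})-{_jD}(E_{p1})\cdot E_{i1}$), and then collapse the resulting sum using the elementary identities $E_{1i}E_{p1}=\delta_{ip}E_{11}$, $E_{1i}E_{1q}=\delta_{i1}E_{1q}$ and $\sum_{i=1}^{s_j}E_{i1}E_{1i}=I$. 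The outcome is $E_{p1}\cdot g_j={_jD}(E_{p1})-{_jD}(E_{p1})\cdot E_{11}$ and $g_j\cdot E_{p1}=-{_jD}(E_{p1})\cdot E_{11}$, so that the common term cancels in the difference and $\phi_j(E_{p1})={_jD}(E_{p1})$; the computation of $\phi_j(E_{1q})$ has the same shape, with ${_jD}(E_{11})\cdot E_{1q}$ in place of the correction term, and yields $\phi_j(E_{1q})={_jD}(E_{1q})$.

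Putting the pieces together gives ${_jD}(S)=[S,g_j]$ on all of $\R^{s_j\times s_j}$, which is the assertion, and the argument is uniform in $j$: the regimes $j=0$, $0<j<k$, $j=k$ differ only in how $\R^{s_j\times s_j}$ acts on $\mathcal K$, and Proposition \ref{bimodule} packages all three cases, so the abstract computation above applies verbatim to each. I do not anticipate a genuine obstacle; the only things demanding care are the side-bookkeeping in the bimodule --- notably that the right action satisfies $(X\cdot S)\cdot T=X\cdot(ST)$, so that $({_jD}(E_{i1})\cdot E_{1i})\cdot E_{p1}={_jD}(E_{i1})\cdot(E_{1i}E_{p1})$ --- and recognizing from the start that one must test against both $\{E_{p1}\}$ and $\{E_{1q}\}$, since $\{E_{p1}\}$ on its own does not generate $\R^{s_j\times s_j}$.
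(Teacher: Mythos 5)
Your proposal is correct and is essentially the paper's argument: both reduce to verifying $S\cdot g_j-g_j\cdot S={_jD}(S)$ on matrix units by substituting \eqref{gj}, applying the Leibniz rule of Proposition \ref{higher-derivations} to pull the matrix unit inside, and collapsing via $E_{rs}E_{i1}=\delta_{si}E_{r1}$ and $\sum_i E_{i1}E_{1i}=I$. The only difference is organizational --- the paper runs this computation once for a general $E_{rs}$ and concludes by linearity, which makes your extra step (testing only on the generators $\{E_{p1}\}\cup\{E_{1q}\}$ and invoking that derivations agreeing on generators coincide) unnecessary, though perfectly valid.
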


\begin{proof}
In view of Proposition \ref{bimodule} and the subsequent definition, it suffices to show that
$$
E_{rs}\cdot g_j -g_j \cdot E_{rs}
={_jD}(E_{rs}),
$$
for all $r,s\in\R^{s_j\times s_j}$.
Using Proposition \ref{higher-derivations}, we obtain
\begin{multline*}
E_{rs}\cdot g_j -g_j \cdot E_{rs}
=-\sum_{i=1}^{s_j} E_{rs}\cdot {_jD}(E_{i1})\cdot E_{1i}
+\sum_{i=1}^{s_j}{_jD}(E_{i1}) \cdot (E_{1i}E_{rs})\\
=-\Big(\sum_{i=1}^{s_j}  {_jD}(E_{rs}E_{i1})\cdot E_{1i}
-\sum_{i=1}^{s_j}  {_jD}(E_{rs})\cdot (E_{i1}E_{1i})\Big)
+\sum_{i=1}^{s_j}{_jD}(E_{i1}) \cdot (E_{1i}E_{rs})\\
=-{_jD}(E_{r1})\cdot E_{1s}
+{_jD}(E_{rs})
+{_jD}(E_{r1}) \cdot E_{1s}
={_jD}(E_{rs}).
\end{multline*}
\end{proof}

\begin{proposition}\label{g}
Let $g_j \in\mathcal{K} $ be as in Proposition \ref{with zero functions}, $j=0,\ldots,k$, and suppose that ${_i\Delta F_j}={_{j+1}\Delta F_i}$, $0\le i\le j\le k$. Then $ g\in\mathcal{K}$ defined by
\begin{multline*}
g(Z^1,\ldots,Z^k)=g_0(Z^1,\ldots,Z^k)+\sum_{\ell=1}^k\sum_{i_0=1}^{s_0}\cdots\sum_{i_{\ell-1}=1}^{s_{\ell-1}}E_{i_0,1}g_{\ell}(E_{1,i_0}Z^1E_{i_1,1},\ldots,E_{1,i_{\ell-2}}Z^{\ell-1}E_{i_{\ell-1},1},\\
E_{1,i_{\ell-1}}Z^\ell,Z^{\ell+1},\ldots,Z^k)
\end{multline*}
satisfies \eqref{defining formula j equals 0}--\eqref{defining formula j equals k}.
\end{proposition}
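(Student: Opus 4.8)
The plan is to verify, for each fixed $j\in\{0,1,\ldots,k\}$, that $g$ satisfies the $j$-th identity among \eqref{defining formula j equals 0}--\eqref{defining formula j equals k}. By Proposition \ref{bimodule} and the definition of ${_jD}$ preceding Proposition \ref{higher-derivations}, this $j$-th identity says exactly that ${_jD}(S)=S\cdot g-g\cdot S$ for all $S\in\R^{s_j\times s_j}$, where $X\mapsto S\cdot X$ and $X\mapsto X\cdot S$ are the two actions of the $j$-th $\R^{s_j\times s_j}$-bimodule structure on $\mathcal{K}$. Writing $g=h_0+h_1+\cdots+h_k$ with $h_0:=g_0$ and $h_\ell$ ($1\le\ell\le k$) the $\ell$-th summand in the definition of $g$, it then suffices to compute $S\cdot h_\ell-h_\ell\cdot S$ for each $\ell$ and to add the results.

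For $\ell>j$, I would show that $h_\ell$ is central for the $j$-th bimodule structure, so it contributes nothing. This is a direct re-indexing: inside $h_\ell$ the summation index $i_j$ links the $j$-th and $(j{+}1)$-st arguments of $g_\ell$, appearing in the matrix unit that multiplies the former on the right and in the one that multiplies the latter on the left (for $j=0$ the outer prefix $E_{i_0,1}$ and the factor $E_{1,i_0}$ in the first argument play this role); the identities $SE_{i_j,1}=\sum_p S_{p,i_j}E_{p,1}$ and $E_{1,i_j}S=\sum_q S_{i_j,q}E_{1,q}$ then allow one to move $S$ across the linked pair and relabel the sum. The compatibility hypothesis is not used here.

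For $\ell\le j$ I claim that, with the convention $T_{j+1}:=0$, one has $S\cdot h_\ell-h_\ell\cdot S=T_\ell-T_{\ell+1}$, where $T_\ell$ is the $k$-linear form
$$\sum_{i_0,\ldots,i_{\ell-1}} E_{i_0,1}\,F_j\bigl(E_{1,i_0}Z^1E_{i_1,1},\ldots,E_{1,i_{\ell-2}}Z^{\ell-1}E_{i_{\ell-1},1},E_{1,i_{\ell-1}}Z^\ell,Z^{\ell+1},\ldots,Z^j,SY^j-Y^jS,Z^{j+1},\ldots,Z^k\bigr),$$
with $F_j$ evaluated at $(Y^0,\ldots,Y^{j-1},Y^j,Y^j,Y^{j+1},\ldots,Y^k)$; thus $T_0={_jD}(S)$, and for $j=0$ only $T_0$ occurs. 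Granting this, summing over $\ell$ telescopes:
$$S\cdot g-g\cdot S=\sum_{\ell=0}^k\bigl(S\cdot h_\ell-h_\ell\cdot S\bigr)=(T_0-T_1)+(T_1-T_2)+\cdots+(T_{j-1}-T_j)+T_j=T_0={_jD}(S),$$
which is the $j$-th identity.

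The one point left to prove --- the formula $S\cdot h_\ell-h_\ell\cdot S=T_\ell-T_{\ell+1}$ for $\ell\le j$ --- is the step I expect to be the main obstacle. For $\ell=j$ it is easy: pulling the $j$-th bimodule operations through the outer sum $\sum_{i_0,\ldots,i_{j-1}}E_{i_0,1}(\cdot)$ reduces the claim to the fact, already recorded in Proposition \ref{with zero functions}, that $g_j$ satisfies the $j$-th identity, which gives $T_j=T_j-T_{j+1}$. For $\ell<j$ I would argue in three steps. First, starting from the explicit form $g_\ell(W^1,\ldots,W^k)=-\sum_{i=1}^{s_\ell}F_\ell\bigl(W^1,\ldots,W^\ell,E_{i1}Y^\ell-Y^\ell E_{i1},E_{1i}W^{\ell+1},W^{\ell+2},\ldots,W^k\bigr)$ of $g_\ell$ from Proposition \ref{with zero functions} (with $F_\ell$ at $(Y^0,\ldots,Y^\ell,Y^\ell,\ldots,Y^k)$), and the difference-differential and similarity axioms for $F_\ell$, one rewrites $S\cdot h_\ell-h_\ell\cdot S$ as a sum of values of ${_{j+1}\Delta} F_\ell$ at the $Y$'s: the conjugator $\bigl[\begin{smallmatrix}I&S\\0&I\end{smallmatrix}\bigr]$ turns the commutator of $F_\ell$ formed from the two multilinear arguments adjacent to its $\Omega^{(j)}$-slot into a value of ${_{j+1}\Delta} F_\ell$ with $SY^j-Y^jS$ inserted in the new slot (the index is $j+1$, not $j$, because $F_\ell$ already carries a doubled $\Omega^{(\ell)}$-slot). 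Second, the hypothesis ${_\ell\Delta} F_j={_{j+1}\Delta} F_\ell$ replaces these by values of ${_\ell\Delta} F_j$. Third, a reconstruction identity --- a consequence of the similarity axiom with conjugator $\bigl[\begin{smallmatrix}I&E_{i1}\\0&I\end{smallmatrix}\bigr]$ and of the direct-sum axiom --- rewrites each term ${_\ell\Delta} F_j\bigl(\ldots,E_{i1}Y^\ell-Y^\ell E_{i1},E_{1i}(\cdot),\ldots\bigr)$ as a commutator in $E_{i1}$, so that summing over $i$ and using $\sum_i E_{i1}E_{1i}=\sum_i E_{ii}=I$ yields exactly $T_\ell-T_{\ell+1}$. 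The whole difficulty is bookkeeping: keeping the nested corner compressions of the $Z^p$, the several difference-differential operators, and the matrix-unit indices aligned so that the telescoping is exact. This is a higher-order elaboration of the long computation in the proof of Theorem \ref{almost result again}, combined with the matrix-unit manipulations in the proof of Theorem \ref{derivation properties}.
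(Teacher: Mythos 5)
Your proposal follows essentially the same route as the paper's proof: the paper likewise expands $R_j\cdot g-g\cdot R_j$ over the summands of $g$, discards the $\ell>j$ pieces (its Lemma \ref{lemma} is exactly your re-indexing argument), converts each $\ell<j$ contribution into a value of ${_{j+1}\Delta}F_\ell$, invokes ${_{j+1}\Delta}F_\ell={_\ell\Delta}F_j$, expands back via the first-order difference formula for $F_j$, and sums the matrix units with $\sum_p E_{pp}=I$ so that everything telescopes down to ${_jD}(R_j)$. Your identity $S\cdot h_\ell-h_\ell\cdot S=T_\ell-T_{\ell+1}$ is precisely the telescoping that the paper's long chain of equalities carries out, so the plan is correct and matches the published argument.
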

For the proof of Proposition \ref{g}, we need the following lemma.
\begin{lemma}\label{lemma}
Let $c\in\mathcal{K}$, $j\in\{0,\ldots,k\}$. Then, for every $R_j\in\R^{s_j\times s_j}$,
\begin{equation}\label{commutator}
\Big[R_j,\sum_{i=1}^{s_j}E_{i1}\cdot c\cdot E_{1i}\Big]=0,
\end{equation}
where $\mathcal{K}=\hom_R(\N_1^{s_0 \times s_1} \otimes \hdots \otimes \N_k^{s_{k - 1} \times s_k}, \N_0^{s_0 \times s_k})$ is viewed as a bimodule over $\R^{s_j\times s_j}$.
\end{lemma}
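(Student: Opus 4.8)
The plan is to reduce \eqref{commutator} to the matrix units and then invoke nothing beyond the abstract bimodule axioms from Proposition~\ref{bimodule} together with the multiplication table $E_{rs}E_{i1}=\delta_{si}E_{r1}$, $E_{1i}E_{rs}=\delta_{ir}E_{1s}$. Since the left-hand side of \eqref{commutator} is $\R$-linear in $R_j$, it suffices to treat $R_j=E_{rs}$ for all indices $r,s\in\{1,\ldots,s_j\}$; writing $z:=\sum_{i=1}^{s_j}E_{i1}\cdot c\cdot E_{1i}\in\mathcal{K}$, the claim becomes $E_{rs}\cdot z=z\cdot E_{rs}$ for all such $r,s$.

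First I would record that, because $\mathcal{K}$ is an $\R^{s_j\times s_j}$-bimodule, the left and right actions are commuting unital ring actions; in particular $E_{i1}\cdot c\cdot E_{1i}$ is unambiguous and one may regroup freely, and $(m\cdot S)\cdot T = m\cdot(ST)$, $T\cdot(S\cdot m)=(TS)\cdot m$ for $m\in\mathcal{K}$. Then acting by $E_{rs}$ on the left, $E_{rs}\cdot(E_{i1}\cdot c\cdot E_{1i})=(E_{rs}E_{i1})\cdot c\cdot E_{1i}=\delta_{si}\,E_{r1}\cdot c\cdot E_{1i}$, so the sum over $i$ collapses to the single term $E_{r1}\cdot c\cdot E_{1s}$. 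Acting by $E_{rs}$ on the right, $(E_{i1}\cdot c\cdot E_{1i})\cdot E_{rs}=E_{i1}\cdot c\cdot(E_{1i}E_{rs})=\delta_{ir}\,E_{i1}\cdot c\cdot E_{1s}$, so the sum again collapses to $E_{r1}\cdot c\cdot E_{1s}$. The two expressions agree, giving $E_{rs}\cdot z=z\cdot E_{rs}$, and \eqref{commutator} then follows for arbitrary $R_j\in\R^{s_j\times s_j}$ by linearity.

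I expect no genuine obstacle here: the only care required is bookkeeping — keeping the order of the two commuting actions straight and using the bimodule convention $(m\cdot S)\cdot T=m\cdot(ST)$ rather than an anti-module convention, so that the right action of $E_{rs}$ on the $E_{1i}$-slot multiplies as $E_{1i}E_{rs}$ and not $E_{rs}E_{1i}$. The argument is uniform in $j$, so it simultaneously covers the three cases $j=0$, $0<j<k$, and $j=k$ of Proposition~\ref{bimodule}, since only the bimodule structure is used; if desired, one could instead substitute the explicit formulas for the left and right actions from Proposition~\ref{bimodule} in each case, but this merely reproduces the same two cancellations.
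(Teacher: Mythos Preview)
Your proof is correct and follows exactly the same approach as the paper's own proof: reduce to matrix units $R_j=E_{rs}$ by linearity, then use $E_{rs}E_{i1}=\delta_{si}E_{r1}$ and $E_{1i}E_{rs}=\delta_{ir}E_{1s}$ to collapse each sum to $E_{r1}\cdot c\cdot E_{1s}$. The paper's version is simply more terse, omitting your commentary on the bimodule axioms and the uniformity in $j$.
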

\begin{proof}
Clearly, by linearity it suffices to show that \eqref{commutator} holds for $R_j=E_{rs}$ with arbitrary $r,s\in\{1,\ldots,s_j\}$. We have
\begin{equation*}
\Big[E_{rs},\sum_{i=1}^{s_j}E_{i1}\cdot c\cdot E_{1i}\Big]=\sum_{i=1}^{s_j}(E_{rs}E_{i1})\cdot c\cdot E_{1i}
-\sum_{i=1}^{s_j}E_{i1}\cdot c\cdot (E_{1i}E_{rs})
=E_{r1}\cdot c\cdot E_{1s}-E_{r1}\cdot c\cdot E_{1s}=0.
\end{equation*}
\end{proof}
\begin{proof}[Proof of Proposition \ref{g}]
For $j=0$, $g$ satisfies \eqref{defining formula j equals 0} by Proposition \ref{with zero functions} and Lemma \ref{lemma}.

For $0<j<k$, the right-hand side of \eqref{defining formula j greater 0} can be written as
\begin{multline*}
(R_j\cdot g-g\cdot R_j)(Z^1,\ldots,Z^k)=(R_j\cdot g_0-g_0\cdot R_j)(Z^1,\ldots,Z^k)\\
+\sum_{\ell=1}^{j-1}\sum_{i_0=1}^{s_0}\cdots\sum_{i_{\ell-1}=1}^{s_{\ell-1}}E_{i_0,1}g_\ell(E_{1,i_0}Z^1E_{i_1,1},\ldots,E_{1,i_{\ell-2}}Z^{\ell-1}E_{i_{\ell-1},1},E_{1,i_{\ell-1}}Z^\ell,Z^{\ell+1},\ldots,Z^jR_j,Z^{j+1},\ldots,Z^k)\\
-\sum_{\ell=1}^{j-1}\sum_{i_0=1}^{s_0}\cdots\sum_{i_{\ell-1}=1}^{s_{\ell-1}}E_{i_0,1}g_\ell(E_{1,i_0}Z^1E_{i_1,1},\ldots,E_{1,i_{\ell-2}}Z^{\ell-1}E_{i_{\ell-1},1},E_{1,i_{\ell-1}}Z^\ell,Z^{\ell+1},\ldots,Z^j,R_jZ^{j+1},\ldots,Z^k)\\
+\sum_{i_0=1}^{s_0}\cdots\sum_{i_{j-1}=1}^{s_{j-1}}E_{i_0,1}g_j(E_{1,i_0}Z^1E_{i_1,1},\ldots,E_{1,i_{j-2}}Z^{j-1}E_{i_{j-1},1},E_{1,i_{j-1}}Z^jR_j,Z^{j+1},\ldots,Z^k)\\
-\sum_{i_0=1}^{s_0}\cdots\sum_{i_{j-1}=1}^{s_{j-1}}E_{i_0,1}g_j(E_{1,i_0}Z^1E_{i_1,1},\ldots,E_{1,i_{j-2}}Z^{j-1}E_{i_{j-1},1},E_{1,i_{j-1}}Z^j,R_jZ^{j+1},Z^{j+2},\ldots,Z^k)\\
=-\sum_{p=1}^{s_0}F_0(Y^0,Y^0,Y^1,\ldots,Y^k)(E_{p,1}Y^0-Y^0E_{p,1},E_{1,p}Z^1,Z^2,\ldots,Z^{j-1},Z^jR_j,Z^{j+1},\ldots,Z^k)\\
+\sum_{p=1}^{s_0}F_0(Y^0,Y^0,Y^1,\ldots,Y^k)(E_{p,1}Y^0-Y^0E_{p,1},E_{1,p}Z^1,Z^2,\ldots,Z^{j-1},Z^j,R_jZ^{j+1},\ldots,Z^k)\\
-\sum_{\ell=1}^{j-1}\sum_{i_0=1}^{s_0}\cdots\sum_{i_{\ell-1}=1}^{s_{\ell-1}}\sum_{p=1}^{s_\ell}E_{i_0,1}F_\ell(Y^0,\ldots,Y^{\ell-1},Y^\ell,Y^\ell,Y^{\ell+1},\ldots,Y^k)
(E_{1,i_0}Z^1E_{i_1,1},\ldots,
E_{1,i_{\ell-2}}Z^{\ell-1}E_{i_{\ell-1},1},\\
E_{1,i_{\ell-1}}Z^\ell,
E_{p,1}Y^\ell-Y^\ell E_{p,1},E_{1,p}Z^{\ell+1},
Z^{\ell+2},\ldots,Z^{j-1},Z^jR_j,Z^{j+1},\ldots,Z^k)\\
+\sum_{\ell=1}^{j-1}\sum_{i_0=1}^{s_0}\cdots\sum_{i_{\ell-1}=1}^{s_{\ell-1}}\sum_{p=1}^{s_\ell}E_{i_0,1}F_\ell(Y^0,\ldots,Y^{\ell-1},Y^\ell,Y^\ell,Y^{\ell+1},\ldots,Y^k)(E_{1,i_0}Z^1E_{i_1,1},\ldots,E_{1,i_{\ell-2}}Z^{\ell-1}E_{i_{\ell-1},1},\\
E_{1,i_{\ell-1}}Z^\ell,E_{p,1}Y^\ell-Y^\ell E_{p,1},E_{1,p}Z^{\ell+1},
Z^{\ell+2},\ldots,Z^{j-1},Z^j,R_jZ^{j+1},\ldots,Z^k)\\
+\sum_{i_0=1}^{s_0}\cdots\sum_{i_{j-1}=1}^{s_{j-1}}E_{i_0,1}F_j(Y^0,\ldots,Y^{j-1},Y^j,Y^j,Y^{j+1},\ldots,Y^k)(E_{1,i_0}Z^1E_{i_1,1},\ldots,E_{1,i_{j-2}}Z^{j-1}E_{i_{j-1},1},\\
E_{1,i_{j-1}}Z^j,R_jY^j-Y^j R_j,Z^{j+1},\ldots,Z^k)\\    
=-\sum_{p=1}^{s_0}{_{j+1}\Delta}F_0(Y^0,Y^0,Y^1,\ldots,Y^{j-1},Y^j,Y^j,Y^{j+1},\ldots,Y^k)(E_{p,1}Y^0-Y^0E_{p,1},E_{1,p}Z^1,Z^2,\ldots,Z^j,\\
R_jY^j-Y^jR_j,Z^{j+1},\ldots,Z^k)\\
-\sum_{\ell=1}^{j-1}\sum_{i_0=1}^{s_0}\cdots\sum_{i_{\ell-1}=1}^{s_{\ell-1}}\sum_{p=1}^{s_\ell}E_{i_0,1}\,{_{j+1}\Delta}F_\ell(Y^0,\ldots,Y^{\ell-1},Y^\ell,Y^\ell,Y^{\ell+1},\ldots,Y^{j-1},Y^j,Y^j,Y^{j+1},\ldots,Y^k)\\
(E_{1,i_0}Z^1E_{i_1,1},\ldots,E_{1,i_{\ell-2}}Z^{\ell-1}E_{i_{\ell-1},1},E_{1,i_{\ell-1}}Z^\ell,E_{p,1}Y^\ell-Y^\ell E_{p,1},
E_{1,p}Z^{\ell+1},
Z^{\ell+2},\ldots,Z^j,\\
R_jY^j-Y^jR_j,Z^{j+1},\ldots,Z^k)\\
+\sum_{i_0=1}^{s_0}\cdots\sum_{i_{j-1}=1}^{s_{j-1}}E_{i_0,1}F_j(Y^0,\ldots,Y^{j-1},Y^j,Y^j,Y^{j+1},\ldots,Y^k)(E_{1,i_0}Z^1E_{i_1,1},\ldots,E_{1,i_{j-2}}Z^{j-1}E_{i_{j-1},1},\\
E_{1,i_{j-1}}Z^j,R_jY^j-Y^j R_j,Z^{j+1},\ldots,Z^k)\\
=-\sum_{p=1}^{s_0}{_0\Delta}F_j(Y^0,Y^0,Y^1,\ldots,Y^{j-1},Y^j,Y^j,Y^{j+1},\ldots,Y^k)(E_{p,1}Y^0-Y^0E_{p,1},E_{1,p}Z^1,Z^2,\ldots,Z^j,\\
R_jY^j-Y^jR_j,Z^{j+1},\ldots,Z^k)
\\
\end{multline*}
\begin{multline*}
-\sum_{\ell=1}^{j-1}\sum_{i_0=1}^{s_0}\cdots\sum_{i_{\ell-1}=1}^{s_{\ell-1}}\sum_{p=1}^{s_\ell}E_{i_0,1}\,{_\ell \Delta}F_j(Y^0,\ldots,Y^{\ell-1},Y^\ell,Y^\ell,Y^{\ell+1},\ldots,Y^{j-1},Y^j,Y^j,Y^{j+1},\ldots,Y^k)\\
(E_{1,i_0}Z^1E_{i_1,1},\ldots,E_{1,i_{\ell-2}}Z^{\ell-1}E_{i_{\ell-1},1},E_{1,i_{\ell-1}}Z^\ell,E_{p,1}Y^\ell-Y^\ell E_{p,1},
E_{1,p}Z^{\ell+1},
Z^{\ell+2},\ldots,Z^j,\\
R_jY^j-Y^jR_j,Z^{j+1},\ldots,Z^k)\\
+\sum_{i_0=1}^{s_0}\cdots\sum_{i_{j-1}=1}^{s_{j-1}}E_{i_0,1}F_j(Y^0,\ldots,Y^{j-1},Y^j,Y^j,Y^{j+1},\ldots,Y^k)(E_{1,i_0}Z^1E_{i_1,1},\ldots,E_{1,i_{j-2}}Z^{j-1}E_{i_{j-1},1},\\
E_{1,i_{j-1}}Z^j,R_jY^j-Y^j R_j,Z^{j+1},\ldots,Z^k)\\
=-\sum_{p=1}^{s_0}E_{p,1}F_j(Y^0,\ldots,Y^{j-1},Y^j,Y^j,Y^{j+1},\ldots,Y^k)(E_{1,p}Z^1,Z^2,\ldots,Z^j,
R_jY^j-Y^jR_j,Z^{j+1},\ldots,Z^k)\\
+ \sum_{p=1}^{s_0}F_j(Y^0,\ldots,Y^{j-1},Y^j,Y^j,Y^{j+1},\ldots,Y^k)(E_{p,p}Z^1,Z^2,\ldots,Z^j,
R_jY^j-Y^jR_j,Z^{j+1},\ldots,Z^k)\\
-\sum_{\ell=1}^{j-1}\sum_{i_0=1}^{s_0}\cdots\sum_{i_{\ell-1}=1}^{s_{\ell-1}}\sum_{p=1}^{s_\ell}E_{i_0,1}F_j(Y^0,\ldots,Y^{j-1},Y^j,Y^j,Y^{j+1},\ldots,Y^k)(E_{1,i_0}Z^1E_{i_1,1},\ldots,
E_{1,i_{\ell-2}}Z^{\ell-1}E_{i_{\ell-1},1},\\
E_{1,i_{\ell-1}}Z^\ell E_{p,1},
E_{1,p}Z^{\ell+1},
Z^{\ell+2},\ldots,Z^j,R_jY^j-Y^jR_j,Z^{j+1},\ldots,Z^k)\\
+ \sum_{\ell=1}^{j-1}\sum_{i_0=1}^{s_0}\cdots\sum_{i_{\ell-1}=1}^{s_{\ell-1}}\sum_{p=1}^{s_\ell}E_{i_0,1}F_j(Y^0,\ldots,Y^{j-1},Y^j,Y^j,Y^{j+1},\ldots,Y^k)(E_{1,i_0}Z^1E_{i_1,1},\ldots,
E_{1,i_{\ell-2}}Z^{\ell-1}E_{i_{\ell-1},1},\\
E_{1,i_{\ell-1}}Z^\ell,
E_{p,p}Z^{\ell+1},
Z^{\ell+2},\ldots,Z^j,R_jY^j-Y^jR_j,Z^{j+1},\ldots,Z^k)\\
+\sum_{i_0=1}^{s_0}\cdots\sum_{i_{j-1}=1}^{s_{j-1}}E_{i_0,1}F_j(Y^0,\ldots,Y^{j-1},Y^j,Y^j,Y^{j+1},\ldots,Y^k)(E_{1,i_0}Z^1E_{i_1,1},\ldots,E_{1,i_{j-2}}Z^{j-1}E_{i_{j-1},1},\\
E_{1,i_{j-1}}Z^j,R_jY^j-Y^j R_j,Z^{j+1},\ldots,Z^k)\\    
=-\sum_{p=1}^{s_0}E_{p,1}F_j(Y^0,\ldots,Y^{j-1},Y^j,Y^j,Y^{j+1},\ldots,Y^k)(E_{1,p}Z^1,Z^2,\ldots,Z^j,R_jY^j-Y^jR_j,Z^{j+1},\ldots,Z^k)\\
+ F_j(Y^0,\ldots,Y^{j-1},Y^j,Y^j,Y^{j+1},\ldots,Y^k)(Z^1,\ldots,Z^j,R_jY^j-Y^jR_j,Z^{j+1},\ldots,Z^k)\\
-\sum_{\ell=1}^{j-1}\sum_{i_0=1}^{s_0}\cdots\sum_{i_{\ell-1}=1}^{s_{\ell-1}}\sum_{p=1}^{s_\ell}E_{i_0,1}F_j(Y^0,\ldots,Y^{j-1},Y^j,Y^j,Y^{j+1},\ldots,Y^k)(E_{1,i_0}Z^1E_{i_1,1},\ldots,
E_{1,i_{\ell-2}}Z^{\ell-1}E_{i_{\ell-1},1},\\
E_{1,i_{\ell-1}}Z^\ell E_{p,1},
E_{1,p}Z^{\ell+1},
Z^{\ell+2},\ldots,Z^j,R_jY^j-Y^jR_j,Z^{j+1},\ldots,Z^k)\\
+ \sum_{\ell=1}^{j-1}\sum_{i_0=1}^{s_0}\cdots\sum_{i_{\ell-1}=1}^{s_{\ell-1}}E_{i_0,1}F_j(Y^0,\ldots,Y^{j-1},Y^j,Y^j,Y^{j+1},\ldots,Y^k)(E_{1,i_0}Z^1E_{i_1,1},\ldots,
E_{1,i_{\ell-2}}Z^{\ell-1}E_{i_{\ell-1},1},\\
E_{1,i_{\ell-1}}Z^\ell,
Z^{\ell+1},
\ldots,Z^j,R_jY^j-Y^jR_j,Z^{j+1},\ldots,Z^k)\\
+\sum_{i_0=1}^{s_0}\cdots\sum_{i_{j-1}=1}^{s_{j-1}}E_{i_0,1}F_j(Y^0,\ldots,Y^{j-1},Y^j,Y^j,Y^{j+1},\ldots,Y^k)(E_{1,i_0}Z^1E_{i_1,1},\ldots,E_{1,i_{j-2}}Z^{j-1}E_{i_{j-1},1},\\
E_{1,i_{j-1}}Z^j,R_jY^j-Y^j R_j,Z^{j+1},\ldots,Z^k)\\ 
=F_j(Y^0,\ldots,Y^{j-1},Y^j,Y^j,Y^{j+1},\ldots,Y^k)(Z^1,\ldots,Z^j,R_jY^j-Y^jR_j,Z^{j+1},\ldots,Z^k),   
 \end{multline*}
which is the left-hand side of \eqref{defining formula j greater 0}.

For $j=k$, the proof is analogous, with the modifications corresponding to the definition of the bimodule $\mathcal{K}$ in this case.
\end{proof}
 
We are now in a position to prove the main theorem of this section.
\begin{proof}[Proof of Theorem \ref{higher-main}]
Suppose  there exists an $f \in \T^k(\Omega^{(0)}, \ldots, \Omega^{(k)}; \N_{0, \nc}, \ldots, \N_{k, \nc})$ such that ${_j\Delta} f = F_j,\quad j = 0, \ldots, k,$. Then the equalities ${_i\Delta} F_j = {_{j + 1}\Delta} F_i$ follow by Proposition \ref{deltas}.

Conversely, if the equaities ${_i\Delta} F_j = {_{j + 1}\Delta} F_i$ hold for all $i,j$ such that $0\le i\le j\le k$, then by Proposition \ref{g} there exists a $g\in\mathcal{K}$ satisfying \eqref{defining formula j equals 0}--\eqref{defining formula j equals k}. This, in turn, by Theorem \ref{almost result again} guarantees the existence of $f \in \T^k(\Omega^{(0)}, \ldots, \Omega^{(k)}; \N_{0, \nc}, \ldots, \N_{k, \nc})$ such that ${_j\Delta} f = F_j$, $j = 0, \ldots, k$, and moreover, \eqref{higher-f} holds.
\end{proof}

\section{Special Cases of Integrability}

In this section, we will look at the major theorem for three specific subsets of nc functions. In each case, we will look at how the special features of each set effect the main theorem.

\subsection{The Modules $\M$ are of the Form $\M = \R^d$}

Suppose that the modules $\M_0$, \ldots, $\M_k$ under consideration have the special form $\R^{d_0}$, \ldots, $\R^{d_k}$. In this case, a directional difference-differential operator can be defined in position $j$ as follows:

For $j = 0$:
\begin{alignat*}{2}
{_0\Delta}_{\alpha}f(X^0_1, X^0_2, X^1 \ldots, X^k)&: \R^{n_0^1 \times n_0^2} \times \N_1^{n_0^2 \times n_1} \times \hdots \times \N_k^{n_{k - 1} \times n_k} \to \N_0^{n_0^1 \times n_k} \\
{_0\Delta}_{\alpha}f(X^0_1, X^0_2, X^1 \ldots, X^k)&(A, Z^1, \ldots, Z^k) = {_0\Delta}f(X^0_1, X^0_2, X^1, \ldots, X^k)(Ae_{\alpha}, Z^1, \ldots, Z^k). \end{alignat*}

For $0 < j < k$:

\begin{alignat*}{2}
{_j\Delta}_{\alpha}&f(X^0, \ldots, X^{j - 1}, X^j_1, X^j_2, \ldots, X^k): \\
&\hspace{25mm} \N_1^{n_0 \times n_1} \times \hdots \times \N_j^{n_{j - 1} \times n_j^1} \times \R^{n_j^1 \times n_j^2} \times \N_{j + 1}^{n_j^2 \times n_{j + 1}} \times \hdots \times \N_k^{n_{k - 1} \times n_k} \to \N_0^{n_0 \times n_k} \\
&{_j\Delta}_{\alpha}f(X^0, \ldots, X^{j - 1}, X^j_1, X^j_2, \ldots, X^k)(Z^1, \ldots, Z^j, A, Z^{j + 1}, \ldots, Z^k) \\
&\hspace{25mm} = {_j\Delta}f(X^0, \ldots, X^{j - 1}, X^j_1, X^j_2, \ldots, X^k)(Z^1, \ldots, Z^j, Ae_{\alpha}, Z^{j + 1}, \ldots, Z^k). \end{alignat*}

For $j = k$:

\begin{alignat*}{2}
{_k\Delta}_{\alpha}f(X^0, \ldots, X^{k - 1}, X^k_1, X^k_2)&: \N_1^{n_0 \times n_1} \times \hdots \times \N_k^{n_{k - 1} \times n_k^1} \times \R^{n_k^1 \times n_k^2} \to \N_0^{n_0 \times n_k^2} \\
{_k\Delta}_{\alpha}f(X^0, \ldots, X^{k - 1}, X^k_1, X^k_2)&(Z^1, \ldots, Z^k, A) = {_k\Delta}f(X^0, \ldots, X^{k - 1}, X^k_1, X^k_2)(Z^1, \ldots, Z^k, Ae_{\alpha}). \end{alignat*}

In each of these cases, we can rewrite the general difference-differential operators in terms of the directional difference-differential operators as follows:

For $j = 0$:

\begin{alignat*}{2}
{_0\Delta}f(X^0_1, X^0_2, X^1 \ldots, X^k)(Z, Z^1, \ldots, Z^k) = \sum_{\alpha = 1}^{d_0} {_0\Delta}_{\alpha}f(X^0_1, X^0_2, X^1 \ldots, X^k)(Z_{\alpha}, Z^1, \ldots, Z^k). \end{alignat*}

For $0 < j < k$:

\begin{alignat*}{2}
{_j\Delta}&f(X^0, \ldots, X^{j - 1}, X^j_1, X^j_2, \ldots, X^k)(Z^1, \ldots, Z^j, Z, Z^{j + 1}, \ldots, Z^k) \\
&= \sum_{\alpha = 1}^{d_j} {_j\Delta}_{\alpha}f(X^0, \ldots, X^{j - 1}, X^j_1, X^j_2, \ldots, X^k)(Z^1, \ldots, Z^j, Z_{\alpha}, Z^{j + 1}, \ldots, Z^k). \end{alignat*}

For $j = k$:

$${_k\Delta}f(X^0, \ldots, X^{k - 1}, X^k_1, X^k_2)(Z^1, \ldots, Z^k, Z) = \sum_{\alpha = 1}^{d_k} {_k\Delta}_{\alpha}f(X^0, \ldots, X^{k - 1}, X^k_1, X^k_2)(Z^1, \ldots, Z^k, Z_{\alpha}).$$

Now, suppose we have $k + 1$ nc functions $F_0, \ldots, F_k$ each of which is order $k + 1$. Since $\M_j = \R^{d_j}$, we can use linearity to rewrite each function as follows:

\begin{alignat*}{2}
F_0(X^0_1, X^0_2, X^1 \ldots, X^k)(Z, Z^1, \ldots, Z^k) &= \sum_{\beta = 1}^{d_0} F_{0, \beta}(X^0_1, X^0_2, X^1 \ldots, X^k)(Z_{\beta}, Z^1, \ldots, Z^k), \\
&\hspace{-70mm}F_j(X^0, \ldots, X^{j - 1}, X^j_1, X^j_2, X^{j + 1}, \ldots, X^k)(Z^1, \ldots, Z^j, Z, Z^{j + 1}, \ldots, Z^k) \\
&\hspace{-50mm} = \sum_{\beta = 1}^{d_j} F_{j, \beta}(X^0, \ldots, X^{j - 1}, X^j_1, X^j_2, X^{j + 1}, \ldots, X^k)(Z^1, \ldots, Z^j, Z_{\beta}, Z^{j + 1}, \ldots, Z^k), \\
F_k(X^0, \ldots, X^{k - 1}, X^k_1, X^k_2)(Z^1, \ldots, Z^k, Z) &= \sum_{\beta = 1}^{d_k} F_{k,\beta}f(X^0, \ldots, X^{k - 1}, X^k_1, X^k_2)(Z^1, \ldots, Z^k, Z_{\beta}). \end{alignat*}

With this background established, we can write the main theorem as follows:
\begin{theorem}
Let $\Omega^{(0)} \subseteq \R^{d_0}, \ldots, \Omega^{(k)} \subseteq \R^{d_k}$ be right admissible nc sets. Let $$F_{j,\beta} \in \T^{k + 1}(\Omega^{(0)}, \ldots, \Omega^{(j - 1)}, \Omega^{(j)}, \Omega^{(j)}, \Omega^{(j + 1)}, \ldots, \Omega^{(k)}; \N_{0, \nc}, \ldots, \N_{j, \nc}, \R_{\nc}, \N_{j + 1, \nc}, \ldots, \N_{k, \nc})$$ for $j = 0, \ldots, k$ and $\beta=1,\ldots,d_j$. Then there exists an $f \in \T^k(\Omega^{(0)}, \ldots, \Omega^{(k)}; \N_{0, \nc}, \ldots, \N_{k, \nc})$ such that ${_j\Delta}_{\beta}f = F_{j, \beta}$ for $j = 0, \ldots, k$ and $\beta=1,\ldots,d_j$ if and only if ${_i\Delta}_\alpha F_{j, \beta} = {_{j + 1}\Delta}_{\beta} F_{i, \alpha}$ for $0 \leq i \leq j \leq k$, $\alpha=1,\ldots,d_i$, and $\beta=1,\ldots,d_j$.
Furthermore, $f$ is uniquely defined up to a $k$-linear mapping $c\colon \N_1\times \cdots\times \N_k \to \N_0$, i.e.,  if $\tilde{f}$ is another antiderivative, then
$$\tilde{f}(X^0, \ldots, X^k)(Z^1, \ldots, Z^k) = f(X^0, \ldots, X^k)(Z^1, \ldots, Z^k) + C(Z^1, \ldots, Z^k),$$
with $C$ defined as in $\eqref{C}$.
\end{theorem}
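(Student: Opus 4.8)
The plan is to obtain this statement as a direct consequence of Theorem~\ref{higher-main}, by reassembling the directional data $\{F_{j,\beta}\}$ into order-$(k+1)$ nc functions with $\R^{d_j}$-valued slots, invoking the general theorem, and then decomposing the resulting antiderivative back into directional form. No new analytic input is needed; the content is purely the translation between $\Delta$ and the directional operators $\Delta_\alpha$.

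First I would define, for $j = 0, \ldots, k$, the function $F_j$ whose $(j+1)$-st linear argument is valued in $\M_{j,\nc}$ (recall $\M_j = \R^{d_j}$) by setting $F_j(\cdots)(Z^1, \ldots, Z^j, Z, Z^{j+1}, \ldots, Z^k) = \sum_{\beta = 1}^{d_j} F_{j,\beta}(\cdots)(Z^1, \ldots, Z^j, Z_\beta, Z^{j+1}, \ldots, Z^k)$, where $Z_\beta$ is the $\beta$-th $\R$-component of the $\R^{d_j}$-matrix $Z$; equivalently $F_{j,\beta}(\cdots)(\ldots, A, \ldots) = F_j(\cdots)(\ldots, Ae_\beta, \ldots)$. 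I would then check that $F_j \in \T^{k+1}(\Omega^{(0)}, \ldots, \Omega^{(j-1)}, \Omega^{(j)}, \Omega^{(j)}, \Omega^{(j+1)}, \ldots, \Omega^{(k)}; \N_{0,\nc}, \ldots, \N_{j,\nc}, \M_{j,\nc}, \N_{j+1,\nc}, \ldots, \N_{k,\nc})$: respect for direct sums, similarities, and intertwinings follows at once from the corresponding properties of each $F_{j,\beta}$, since taking the $\R^{d_j}$-component of a block row or column of an $\R^{d_j}$-matrix commutes with forming the blocks and with multiplication by scalar matrices --- exactly the bookkeeping underlying the passage between $\Delta$ and $\Delta_\alpha$ recorded at the start of this section. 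Symmetrically, any order-$(k+1)$ nc function with an $\R^{d_j}$-valued slot is recovered from the $d_j$ component functions obtained by evaluating that slot on $e_1, \ldots, e_{d_j}$, so the correspondence $\{F_{j,\beta}\}_{\beta=1}^{d_j} \leftrightarrow F_j$ is a bijection, and for $f \in \T^k(\Omega^{(0)}, \ldots, \Omega^{(k)}; \N_{0,\nc}, \ldots, \N_{k,\nc})$ one has ${_j\Delta}_\beta f = F_{j,\beta}$ for all $\beta$ if and only if ${_j\Delta} f = F_j$.

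Next I would match the compatibility conditions. For $0 \le i \le j \le k$ the order-$(k+2)$ function ${_i\Delta} F_j$ has two linear slots valued in modules of the form $\R^d$: the $\M_i = \R^{d_i}$-slot created by ${_i\Delta}$ and the $\M_j = \R^{d_j}$-slot inherited from $F_j$; evaluating these on $e_\alpha$ and $e_\beta$ respectively returns ${_i\Delta}_\alpha F_{j,\beta}$. Likewise the $(j+1)$-st argument of $F_i$ is $\Omega^{(j)}$-valued (its $p$-th argument is $\Omega^{(p-1)}$-valued for $p \ge i+1$, and $j+1 \ge i+1$), so ${_{j+1}\Delta} F_i$ carries a new $\R^{d_j}$-slot together with the inherited $\R^{d_i}$-slot, and evaluating on $e_\beta$ and $e_\alpha$ returns ${_{j+1}\Delta}_\beta F_{i,\alpha}$. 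Since a multilinear map into $\R^d$ is determined by its values on $e_1, \ldots, e_d$, the identities ${_i\Delta} F_j = {_{j+1}\Delta} F_i$ (all $i \le j$) are equivalent to the identities ${_i\Delta}_\alpha F_{j,\beta} = {_{j+1}\Delta}_\beta F_{i,\alpha}$ (all $i \le j$, $\alpha \le d_i$, $\beta \le d_j$). The theorem then follows from Theorem~\ref{higher-main}: a directional antiderivative $f$ satisfies ${_j\Delta} f = F_j$, whence ${_i\Delta} F_j = {_{j+1}\Delta} F_i$ by Proposition~\ref{deltas}; conversely the directional compatibility gives ${_i\Delta} F_j = {_{j+1}\Delta} F_i$, so Theorem~\ref{higher-main} produces $f$ with ${_j\Delta} f = F_j$, i.e. ${_j\Delta}_\beta f = F_{j,\beta}$. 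Uniqueness transfers verbatim: if $\tilde f$ is another directional antiderivative then ${_j\Delta}_\beta \tilde f = {_j\Delta}_\beta f$ for all $j, \beta$, hence ${_j\Delta} \tilde f = {_j\Delta} f$ for all $j$, and Theorem~\ref{higher-main} gives $\tilde f - f = C$ with $C$ of the form \eqref{C}.

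The only genuine work is the bookkeeping in the preceding paragraph: identifying which of the repeated $\Omega^{(\cdot)}$-arguments of $F_j$ (resp.\ $F_i$) the subscript of ${_i\Delta}$ (resp.\ ${_{j+1}\Delta}$) acts upon, and tracking the boundary cases $j=0$, $j=k$, and the diagonal case $i=j$, where two of the $\R^d$-valued slots happen to lie over the same module. This is all position-matching, so I do not anticipate a real obstacle.
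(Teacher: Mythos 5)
Your proposal is correct and follows essentially the same route as the paper: assemble the directional data $\{F_{j,\beta}\}$ into order-$(k+1)$ nc functions $F_j$ with $\R^{d_j}$-valued slots via linearity, show that the directional compatibility conditions ${_i\Delta}_\alpha F_{j,\beta} = {_{j+1}\Delta}_\beta F_{i,\alpha}$ are equivalent to ${_i\Delta}F_j = {_{j+1}\Delta}F_i$ (forward by summing over $\alpha,\beta$, backward by evaluating on $e_\alpha, e_\beta$), and then invoke Theorem~\ref{higher-main}. The paper's proof consists of exactly this equivalence check, taking the dictionary between $\Delta$ and $\Delta_\alpha$ from the preamble of the subsection for granted, whereas you spell that dictionary out; the substance is the same.
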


\begin{proof}
It only needs to be shown that the conditions ${_i\Delta}_\alpha F_{j, \beta} = {_{j + 1}\Delta}_{\beta} F_{i, \alpha}$ for all $\alpha,\beta$ are in this case equivalent to the condition ${_i\Delta}F_j = {_{j + 1}\Delta}F_i$, where $F_j$'s are defined earlier in this section.

First, suppose that ${_i\Delta}_\alpha F_{j, \beta} = {_{j + 1}\Delta}_{\beta} F_{i, \alpha}$ for all appropriate $i,j,\alpha, \beta$. Then
\begin{alignat*}{2}
&{_i\Delta}F_j(X^0, \ldots, X^{i - 1}, X^i_1, X^i_2, X^{i + 1}, \ldots, X^{j - 1}, X^j_1, X^j_2, X^{j + 1}, \ldots, X^k) \\
&\hspace{50mm} (Z^1, \ldots, Z^i, Z', Z^{i + 1}, \ldots, Z^j, Z'', Z^{j + 1}, \ldots, Z^k) \\
&=\sum_{\alpha = 1}^{d_i} \sum_{\beta = 1}^{d_j} {_i\Delta}_{\alpha}F_{j, \beta}(X^0, \ldots, X^{i - 1}, X^i_1, X^i_2, X^{i + 1}, \ldots, X^{j - 1}, X^j_1, X^j_2, X^{j + 1}, \ldots, X^k) \\
&\hspace{50mm} (Z^1, \ldots, Z^i, Z'_{\alpha}, Z^{i + 1}, \ldots, Z^j, Z''_{\beta}, Z^{j + 1}, \ldots, Z^k) \\
&\hspace{15mm}= \sum_{\alpha = 1}^{d_i} \sum_{\beta = 1}^{d_j} {_{j + 1}\Delta}_{\beta} F_{i, \alpha}(X^0, \ldots, X^{i - 1}, X^i_1, X^i_2, X^{i + 1}, \ldots, X^{j - 1}, X^j_1, X^j_2, X^{j + 1}, \ldots, X^k) \\
&\hspace{65mm} (Z^1, \ldots, Z^i, Z'_{\alpha}, Z^{i + 1}, \ldots, Z^j, Z''_{\beta}, Z^{j + 1}, \ldots, Z^k) \\
&\hspace{15mm}= {_{j + 1}\Delta}F_i(X^0, \ldots, X^{i - 1}, X^i_1, X^i_2, X^{i + 1}, \ldots, X^{j - 1}, X^j_1, X^j_2, X^{j + 1}, \ldots, X^k) \\
&\hspace{65mm} (Z^1, \ldots, Z^i, Z', Z^{i + 1}, \ldots, Z^j, Z'', Z^{j + 1}, \ldots, Z^k). \end{alignat*}

Conversely, suppose that ${_i\Delta}F_j = {_{j + 1}\Delta}F_i$. Then
\begin{alignat*}{2}
&{_i\Delta}_{\alpha}F_{j, \beta}(X^0, \ldots, X^{i - 1}, X^i_1, X^i_2, X^{i + 1}, \ldots, X^{j - 1}, X^j_1, X^j_2, X^{j + 1}, \ldots, X^k) \\
&\hspace{50mm} (Z^1, \ldots, Z^i, Z'_{\alpha}, Z^{i + 1}, \ldots, Z^j, Z''_{\beta}, Z^{j + 1}, \ldots, Z^k) \\
&= {_i\Delta}F_j(X^0, \ldots, X^{i - 1}, X^i_1, X^i_2, X^{i + 1}, \ldots, X^{j - 1}, X^j_1, X^j_2, X^{j + 1}, \ldots, X^k) \\
&\hspace{50mm} (Z^1, \ldots, Z^i, Z'e_{\alpha}, Z^{i + 1}, \ldots, Z^j, Z''e_{\beta}, Z^{j + 1}, \ldots, Z^k) \\
&= {_{j + 1}\Delta}F_i(X^0, \ldots, X^{i - 1}, X^i_1, X^i_2, X^{i + 1}, \ldots, X^{j - 1}, X^j_1, X^j_2, X^{j + 1}, \ldots, X^k) \\
&\hspace{50mm} (Z^1, \ldots, Z^i, Z'e_{\alpha}, Z^{i + 1}, \ldots, Z^j, Z''e_{\beta}, Z^{j + 1}, \ldots, Z^k) \\
&= {_{j + 1}\Delta}_{\beta} F_{i, \alpha}(X^0, \ldots, X^{i - 1}, X^i_1, X^i_2, X^{i + 1}, \ldots, X^{j - 1}, X^j_1, X^j_2, X^{j + 1}, \ldots, X^k) \\
&\hspace{65mm} (Z^1, \ldots, Z^i, Z'_{\alpha}, Z^{i + 1}, \ldots, Z^j, Z''_{\beta}, Z^{j + 1}, \ldots, Z^k).\end{alignat*}
\end{proof}

\subsection{The Functions are NC Polynomials}

We define a nc polynomial of order $k$ as follows. Let $x^j=(x^j_1,\ldots,x^j_{d_j})$, $j = 0, \ldots, k$, and $z^j=(z^j_1,\ldots,z^j_{d'_j})$, $j=1,\ldots,k$, be tuples of free noncommuting indeterminates.
Let $\mathcal{G}_{d_j}$ and $\mathcal{G}_{d'_{j}}$ be free monoids on $d_j$ and $d'_{j}$ generators, $g^j_1,\ldots, g^j_{d_j}$ and $g'^j_1,\ldots,g'^j_{d'_j}$, respectively. Let $\mathcal{G} = \prod_{j = 0}^k \mathcal{G}_{d_j}$ and $\mathcal{G}' = \prod_{j = 1}^k \mathcal{G}_{d'_{j}}$. Let $\N$ be a module over a noncommutative unital ring $\R$, $p_{(w, v)} \in \N$ for $w \in \mathcal{G}$ and $v \in \mathcal{G}'$.  Then a nc polynomial of order $k$ is given by $$p = \sum_{\underset{|w| \leq L}{w \in\mathcal{G}}} \sum_{\underset{|v^1| = \hdots = |v^k| = 1}{v \in \mathcal{G}'}} p_{(w, v)} (x^0)^{w_0}(z^1)^{v_1}(x^1)^{w_1} \hdots (z^k)^{v_k}(x^k)^{w_k}.$$
Here $|w| = |w^0| + \hdots + |w^k|$, where $|w^j|$ is the length of the $j$th word. Notice that the module of nc polynomials of order $k$ over $\R$ can be naturally identified with 
$$\bigoplus_{\underset{|v^1| = \hdots = |v^k| = 1}{v \in \mathcal{G}'}}\R\langle x^1\rangle\otimes\cdots\otimes\R\langle x^k\rangle,$$
where $\R\langle x^j\rangle$ denotes the module of nc polynomials (of order 0) with coefficients in $\R$.

One can evaluate $p$ on matrices as follows. Let
$X^j \in \(\R^{n_j \times n_j}\)^{d_j}$, $j = 0, \ldots, k$, and let $Z^j \in \(\R^{n_{j - 1} \times n_j}\)^{d'_j}$, $j = 1, \ldots, k$.  Then $$p(X^0, \ldots, X^k)(Z^1, \ldots, Z^k) = \sum_{\underset{|w| \leq L}{w \in\mathcal{G}}} \sum_{\underset{|v^1| = \hdots = |v^k| = 1}{v \in \mathcal{G}'}} p_{(w, v)} (X^0)^{w_0}(Z^1)^{v_1}(X^1)^{w_1} \hdots (Z^k)^{v_k}(X^k)^{w_k} \in \N^{n_0 \times n_k},$$
and a nc polynomial  of order $k$ can be viewed as a nc function of order $k$: 
$$p \in \T^k(\R^{d_0}_{\nc}, \ldots, \R^{d_k}_{\nc}; \N_{\nc}, \R^{d'_{1}}_{\nc}, \ldots, 
\R^{d'_{k}}_{\nc}).$$ 

For a word $w \in \mathcal{G}_{d}$, let $w_{[i]}$ be the subword containing the first $i$ letters of $w$ and let $_{[i]}w$ be the subword containing the last $i$ letters of $w$.

\begin{proposition}\label{GHV}
A first order nc polynomial $p$ is integrable, that is, there exists a (zero-order) nc polynomial $q$ such that $\Delta q=p$, if and only if for every monomial of positive total degree, $p_wx^{w_{[i_0]}}z_{j_{i_0 + 1}}y^{_{[|w| - i_0 - 1]}w}$ that is in $p$, $p_w\sum_{i = 0}^{|w| - 1} x^{w_{[i]}}z_{j_{i + 1}}y^{_{[|w| - i - 1]}w}$ is also in $p$. In this case, $q$ has the form $q=q_\emptyset+\sum_{0<|w|\le L}p_wx^w$.
\end{proposition}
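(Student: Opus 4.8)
The plan is to reduce the statement to an elementary linear‑algebra observation about the action of $\Delta$ on monomials, working throughout with nc polynomials as formal elements of the free modules introduced above, so that distinct monomials form a basis and $\Delta$ is an additive map that does not touch the coefficients in $\N$. The computation I would record first is the closed form of $\Delta$ on a zero‑order monomial: evaluating $x^w$, $w=j_1\cdots j_n$, on block upper triangular matrices and reading off the $(1,2)$ block gives
$$
\Delta(x^w)=\sum_{i=0}^{|w|-1}x^{w_{[i]}}z_{j_{i+1}}y^{_{[|w|-i-1]}w},
$$
so that $\Delta$ kills constants and sends $x^w$ (for $|w|\ge 1$) to the sum of the $|w|$ single ``cuts'' of $w$; note that this is precisely the symmetrized sum appearing in the statement. (Here, as is implicit in the statement, the differential variables $z$ are identified with the original variables, so $d'_1=d_0=d_1=:d$.) This formula follows by induction on $n$ from the block‑matrix definition of $\Delta$, using that the $(1,2)$ block of a product of block upper triangular matrices is the sum of the contributions in which exactly one factor is replaced by its $(1,2)$ block.

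Next I would organize the first‑order monomials into blocks. To the monomial $x^{u}z_{j}y^{u'}$ attach the word $w:=uju'$ together with the cut position $|u|\in\{0,\ldots,|w|-1\}$; conversely, for a fixed word $w$ of length $n$ the monomials with attached word $w$ are exactly the $n$ cuts $x^{w_{[i]}}z_{j_{i+1}}y^{_{[n-i-1]}w}$, $0\le i\le n-1$. Thus the module of first‑order nc polynomials is the direct sum $\bigoplus_{w}V_w$, where $V_w$ is free on these $n$ cut‑monomials, and by the formula above $\Delta(x^w)$ is the sum of the basis elements of $V_w$, while $\Delta$ is block diagonal: $\Delta\bigl(q_\emptyset+\sum_{|w|\ge 1}q_w x^w\bigr)=\sum_{|w|\ge 1}q_w\,\Delta(x^w)$ with $\Delta(x^w)\in V_w$. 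Consequently $\Delta$ is injective on the span of the $x^w$ with $|w|\ge 1$, its kernel on polynomials is exactly the constants $q_\emptyset\in\N$, and a first‑order polynomial $p=\sum_w p^{(w)}$ with $p^{(w)}\in V_w$ lies in the image of $\Delta$ if and only if each component $p^{(w)}$ is a scalar multiple of $\Delta(x^w)$.

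It then remains to match this with the stated criterion and extract $q$. Writing $p^{(w)}=\sum_{i=0}^{|w|-1}c_{w,i}\,x^{w_{[i]}}z_{j_{i+1}}y^{_{[|w|-i-1]}w}$ with $c_{w,i}\in\N$, the condition ``$p^{(w)}$ is a multiple of $\Delta(x^w)$'' says exactly that $c_{w,0}=\cdots=c_{w,|w|-1}$, and this is what the Proposition demands: it asserts that whenever some cut of $w$ occurs in $p$ with coefficient $p_w$, every cut of $w$ occurs with that same coefficient $p_w$ (words none of whose cuts occur contribute $0$ and are automatically balanced). When this holds, $p=\sum_{0<|w|\le L}p_w\,\Delta(x^w)=\Delta\bigl(\sum_{0<|w|\le L}p_w x^w\bigr)$, so $q=\sum_{0<|w|\le L}p_w x^w$ is an antiderivative, and since the kernel of $\Delta$ on polynomials is $\N$, every antiderivative has the form $q=q_\emptyset+\sum_{0<|w|\le L}p_w x^w$; conversely, if $p=\Delta q$ with $q=q_\emptyset+\sum_{0<|w|\le L}q_w x^w$ then $p=\sum_w q_w\,\Delta(x^w)$, so the coefficient of each cut of $w$ in $p$ equals $q_w$, and the criterion holds.

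I do not expect any serious obstacle: the whole argument is driven by the rank‑one‑per‑block structure of $\Delta$, and the only points requiring care are verifying the closed form for $\Delta(x^w)$ cleanly via the block upper triangular evaluation (routine, but the indices must be tracked) and reconciling the terse notation of the statement with the block decomposition $\bigoplus_w V_w$. The noncommutativity of $\R$ and the mere module structure of $\N$ are harmless, since $\Delta$ acts only on the polynomial variables and the comparison is between coefficients of basis monomials. One may also observe that this criterion is the polynomial specialization of the condition ${_0\Delta}F={_1\Delta}F$ of Theorem \ref{main}, but the direct argument above is self‑contained.
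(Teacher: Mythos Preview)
Your proposal is correct and follows essentially the same approach as the paper: both arguments rest on the formula $\Delta(x^w)=\sum_{i=0}^{|w|-1}x^{w_{[i]}}z_{j_{i+1}}y^{_{[|w|-i-1]}w}$ and the linear independence of monomials, so that $p$ lies in the image of $\Delta$ exactly when the coefficients of all cuts of each word $w$ agree. The paper's proof is extremely terse (essentially ``clearly $q$ is an antiderivative'' and ``we can easily see that $p=\Delta q$ has the desired property''), while you make explicit the block-diagonal structure $\bigoplus_w V_w$ that underlies this; this is a welcome elaboration rather than a different route.
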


\begin{proof}
Suppose that $p$ has the desired form, i.e., can be written as
$$p = \sum_{w \in \mathcal{G}_d\colon|w|\le L} p_w\sum_{i = 0}^{|w|-1} x^{w_{[i]}}z_{j_{i+ 1}}y^{_{[|w| - i - 1]}w}.$$
Then, clearly, $q(x)=q_\emptyset+\sum_{0<|w|\le L}p_wx^w$ is an antiderivative of $p$. Conversely, given the zero-order polynomial $q$ as above, we can easily see that $p=\Delta q$ has the desired property.
\end{proof}

We note that Proposition \ref{GHV} has been proved, in an equivalent form, in \cite[Theorem 2.4]{Vin}.

\begin{remark}
One can show that the property of the first order polynomial $p$ stated in Proposition \ref{GHV} is equivalent to the condition that ${_0\Delta} p={_1\Delta} p$ directly, bypassing Theorem \ref{main}. Indeed,
suppose that $p$ has the desired form and hence can be written as
$$p = \sum_{w \in \mathcal{G}_d\colon|w|\le L} p_w\sum_{i = 0}^{|w|-1} x^{w_{[i]}}z_{j_{i+ 1}}y^{_{[|w| - i - 1]}w}.$$
Then
\begin{multline*}
p\left(\begin{bmatrix} X & Z^1 \\ 0 & W \end{bmatrix},Y\right)\left(\begin{bmatrix} Z \\ Z^2 \end{bmatrix}\right)=\sum_{w } p_w\sum_{i = 0}^{|w| - 1} \begin{bmatrix} X & Z^1 \\ 0 & W \end{bmatrix}^{w_{[i]}}\begin{bmatrix} Z_{j_{i + 1}} \\ Z_{j_{i + 1}}^2 \end{bmatrix}Y^{_{[|w| - i - 1]}w} \\
= \sum_{w } p_w\sum_{i = 0}^{|w| - 1} \begin{bmatrix} X^{w_{[i]}}Z_{j_{i + 1}}Y^{_{[|w| - i - 1]}w} + \sum_{\ell = 0}^{i - 1} X^{(w_{[i]})_{[\ell]}}Z^1_{j_{\ell + 1}}W^{_{[|w_{[i]}| - \ell - 1]}{(w_{[i]})}}Z^2_{j_{i + 1}}Y^{_{[|w| - i - 1]}w} \\ 
W^{w_{[i]}}Z^2_{j_{{i + 1}}}Y^{_{[|w| - i - 1]}w} \end{bmatrix}\\
= \sum_{w } p_w\sum_{i = 0}^{|w| - 1} \begin{bmatrix} X^{w_{[i]}}Z_{j_{i + 1}}Y^{_{[|w| - i - 1]}w} + \sum_{\ell = 0}^{i - 1} X^{w_{[\ell]}}Z^1_{j_{\ell + 1}}W^{_{[i - \ell - 1]}{(w_{[i]})}}Z^2_{j_{i + 1}}Y^{_{[|w| - i - 1]}w} \\ 
W^{w_{[i]}}Z^2_{j_{{i + 1}}}Y^{_{[|w| - i - 1]}w} \end{bmatrix}
 \end{multline*}
and
\begin{multline*}
p\left(X,\begin{bmatrix} W & Z^2 \\ 0 & Y \end{bmatrix}\right)\left( \begin{bmatrix} Z^1 & Z \end{bmatrix}\right)=\sum_{w } p_w\sum_{i = 0}^{|w| - 1} X^{w_{[i]}}\begin{bmatrix} Z^1_{j_{i+1}} & Z_{j_{i+1}} \end{bmatrix}\begin{bmatrix} W & Z^2 \\ 0 & Y \end{bmatrix}^{_{[|w| - i - 1]}w} \\
= \sum_{w } p_w\sum_{i = 0}^{|w| - 1} \[ \vphantom{\sum_{\ell = 1}^{|w| - i - 1}} X^{w_{[i]}}Z^1_{j_{i+1}}W^{_{[|w| - i - 1]}w} \right. \\
 \left. \sum_{\ell = i + 1}^{|w| - 1} X^{w_{[i]}}Z^1_{j_{i + 1}}W^{({_{[|w| - i - 1]}w})_{[\ell - i - 1]}}Z^2_{j_{\ell + 1}}Y^{_{[|w| - \ell -1]}({_{[|w| - i - 1]}w})} + X^{w_{[i]}}Z_{j_{i + 1}}Y^{w_{[|w| - i - 1]}} \] \\
= \sum_{w } p_w\sum_{i = 0}^{|w| - 1} \[ \vphantom{\sum_{\ell = 1}^{|w| - i - 1}} X^{w_{[i]}}Z^1_{j_{i+1}}W^{_{[|w| - i - 1]}w} \right. \\
 \left. \sum_{\ell = i + 1}^{|w| - 1} X^{w_{[i]}}Z^1_{j_{i + 1}}W^{({_{[|w| - i - 1]}w})_{[\ell - i - 1]}}Z^2_{j_{\ell + 1}}Y^{_{[|w| - \ell - 1]}w} + X^{w_{[i]}}Z_{j_{i + 1}}Y^{w_{[|w| - i - 1]}} \]. \end{multline*}
Hence,
\begin{gather*}
\begin{aligned}
{_0\Delta}p(X, W, Y)(Z^1, Z^2) &=\sum_{w } p_w\sum_{i = 0}^{|w| - 1} \sum_{\ell = 0}^{i - 1} X^{w_{[\ell]}}Z^1_{j_{\ell + 1}}W^{_{[i - \ell - 1]}{(w_{[i]})}}Z^2_{j_{i + 1}}Y^{_{[|w| - i - 1]}w}\\
&=\sum_{w } p_w\sum_{i = 1}^{|w| - 1} \sum_{\ell = 0}^{i - 1} X^{w_{[\ell]}}Z^1_{j_{\ell + 1}}W^{_{[i - \ell - 1]}{(w_{[i]})}}Z^2_{j_{i + 1}}Y^{_{[|w| - i - 1]}w},
\end{aligned} \\
\begin{aligned}
{_1\Delta}p(X, W, Y)(Z^1, Z^2) &= \sum_{w } p_w\sum_{i = 0}^{|w| - 1} \sum_{\ell = i + 1}^{|w| - 1} X^{w_{[i]}}Z^1_{j_{i + 1}}W^{({_{[|w| - i - 1]}w})_{[\ell - i - 1]}}Z^2_{j_{\ell + 1}}Y^{_{[|w| - \ell - 1]}w}\\
&= \sum_{w } p_w\sum_{\ell = 1}^{|w| - 1} \sum_{i = 0}^{\ell - 1} X^{w_{[i]}}Z^1_{j_{i + 1}}W^{({_{[|w| - i - 1]}w})_{[\ell - i - 1]}}Z^2_{j_{\ell + 1}}Y^{_{[|w| - \ell - 1]}w}\\
&= \sum_{w } p_w\sum_{i = 1}^{|w| - 1} \sum_{\ell = 0}^{i - 1} X^{w_{[\ell]}}Z^1_{j_{\ell + 1}}W^{({_{[|w| - \ell - 1]}w})_{[i - \ell - 1]}}Z^2_{j_{i + 1}}Y^{_{[|w| - i - 1]}w}.
\end{aligned} \end{gather*}
Observing that 
$$_{[i-\ell-1]}(w_{[i]})=(_{[|w|-\ell-1]}w)_{[i-\ell-1]}=\left\{\begin{array}{ll}
g_{j_{\ell+2}}\cdots g_{j_i}, & 0\le\ell\le i-2,\\
\emptyset, & \ell=i-1,\end{array}\right.$$
we conclude that ${_0\Delta}p = {_1\Delta}p$, and by Theorem \ref{main} $p$ is integrable.

Conversely, consider a polynomial of the form $$p = \sum_{i = 1}^{|w|} p_{w, i}x^{w_{[i - 1]}}z_{j_i}y^{_{[|w| - i]}w}$$ and suppose that ${_0\Delta}p = {_1\Delta}p$.  To prove the result, we proceed by induction on the length of the word $w$. 

If $w$ has length $1$, then $p = p_{g_j}z_j$ is in the desired form: there is only one monomial. If $w$ has length $2$, then $p = p_{g_{j_1}g_{j_2}, 1}z_{j_1}y_{j_2} + p_{g_{j_1}g_{j_2}, 2}x_{j_1}z_{j_2}$. We compute  
 ${_0\Delta}p(X,W,Y)(Z^1,Z^2) =p_{g_{j_1}g_{j_2}, 2}Z^1_{j_1}Z^2_{j_2}$ and ${_1\Delta}p(X, W, Y)(Z^1, Z^2) = p_{g_{j_1}g_{j_2}, 1}Z^1_{j_1}Z^2_{j_2}$, so we conclude that $p_{g_{j_1}g_{j_2}, 1}=p_{g_{j_1}g_{j_2}, 2}$. 

Suppose by induction that $p_{w,i}$, $i=1,\ldots,|w|$, are all equal for any word $w$ of length $L - 1$. Then, for words of length $L$, we can write
\begin{align*}
\sum_{i = 1}^{L} p_{w, i}x^{w_{[i - 1]}}z_{j_i}y^{_{[L - i]}w} &= \(\sum_{i = 1}^{L - 1} p_{w, i}x^{w_{[i - 1]}}z_{j_i}y^{_{[L - 1-i]}(w_{[L - 1]})}\)y_{j_L} + p_{w, L}x^{w_{[L - 1]}}z_{j_L}. \end{align*}
Note that the term in parentheses is also a polynomial of the same form as $p$, but using the word $w_{[L - 1]}$ instead of $w$. Call this polynomial $\tilde{p}$. We have
\begin{align*}
{_0\Delta}p(X, W, Y)(Z^1, Z^2) &= {_0\Delta}\tilde{p}(X, W, Y)(Z^1, Z^2)Y_{j_L} \\
&\hspace{20mm} + \(p_{w, L}\sum_{i = 1}^{L - 2} X^{(w_{[L - 1]})_{[i - 1]}}Z^1_{j_i}W^{_{[L - i - 1]}(w_{[L - 1]})} \)Z^2_{j_L}, \\
{_1\Delta}p(X, W, Y)(Z^1, Z^2) &= {_1\Delta}\tilde{p}(X, W, Y)(Z^1, Z^2)Y_{j_L} + \tilde{p}(X, W)(Z^1)Z^2_{j_L}.
\end{align*}
Since we are assuming that ${_0\Delta}p = {_1\Delta}p$, the comparison of the coefficients gives us that 
$p_{w, i}=p_{w,L}$ for all $i=1,\ldots,L-1$, as needed.
\end{remark}

Proposition \ref{GHV} can be generalized to the case of an nc polynomial of arbitrary order. 

\begin{proposition}
An nc polynomial $p$ of order $k + 1$ is integrable with respect to ${_j\Delta}$, that is, there exists a nc polynomial $q$ of order $k$ such that ${_j\Delta}q=p$, if and only if for every monomial, \begin{multline*}
p_{(w,v)}(x^0)^{w_0}(z^1)^{v_1}(x^1)^{w_1} \cdots (z^{j})^{v_{j}}(x^j)^{({w_j})_{[i_0]}}z^{j + 1}_{\ell^{(j)}_{i_0 + 1}}(x^{j + 1})^{_{[|w_j| - i_0 - 1]}(w_j)}(z^{j + 2})^{v_{j + 1}}(x^{j + 2})^{w_{j + 1}}\\ \cdots (z^{k+1})^{v_{k}}(x^{k+1})^{w_{k}}\end{multline*} that is in $p$, 
where $w^j=\ell^{(j)}_1\cdots\ell^{(j)}_{|w|}$, the polynomial
\begin{multline*}
p_{(w,v)}\sum_{i = 0}^{|w_j| - 1}(x^0)^{w_0}(z^1)^{v_1}(x^1)^{w_1} \cdots (z^{j})^{v_{j}}(x^j)^{({w_j})_{[i_0]}}z^{j + 1}_{\ell^{(j)}_{i + 1}}(x^{j + 1})^{_{[|w_j| - i - 1]}(w_j)}(z^{j + 2})^{v_{j + 1}}(x^{j + 2})^{w_{j + 1}}\\ \cdots (z^{k+1})^{v_{k}}(x^{k+1})^{w_{k}}\end{multline*} is also in $p$. In this case, $q$ has the form
$$q=\sum_{\underset{|w| \leq L}{w \in\mathcal{G}}} \sum_{\underset{|v^1| = \hdots = |v^k| = 1}{v \in \mathcal{G}'}} p_{(w, v)} (x^0)^{w_0}(z^1)^{v_1}(x^1)^{w_1} \hdots (z^j)^{v_j}(x^j)^{w_j}(z^{j + 2})^{v_{j + 1}}(x^{j + 2})^{w_{j + 1}}\\ \cdots (z^{k+1})^{v_{k}}(x^{k+1})^{w_{k}}.$$
\end{proposition}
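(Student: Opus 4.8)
The plan is to mirror the proof of Proposition \ref{GHV} (and the direct computation in the Remark following it): the single‑operator statement for higher order polynomials is, combinatorially, the order‑one statement with a collection of spectator variables carried along. The first step is to record the action of ${_j\Delta}$ on a monomial. If $q=\sum_{(w,v)}q_{(w,v)}\,(x^0)^{w_0}(z^1)^{v_1}(x^1)^{w_1}\cdots(z^k)^{v_k}(x^k)^{w_k}$ is an order‑$k$ nc polynomial with $w=(w_0,\ldots,w_k)$ and $w_j=\ell^{(j)}_1\cdots\ell^{(j)}_{|w_j|}$, then the defining block upper‑triangular substitution $x^j\mapsto\begin{bmatrix} x^j & z^{j+1}\\ 0 & x^{j+1}\end{bmatrix}$ in the $j$th slot leaves every factor other than $(x^j)^{w_j}$ block diagonal, and replaces the upper‑right block of $(x^j)^{w_j}$ by $\sum_{i=0}^{|w_j|-1}(x^j)^{(w_j)_{[i]}}z^{j+1}_{\ell^{(j)}_{i+1}}(x^{j+1})^{_{[|w_j|-i-1]}(w_j)}$. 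Hence ${_j\Delta}q$ is the sum, over those monomials of $q$ with $|w_j|\ge1$, of $q_{(w,v)}$ times the corresponding complete ``split family'' of order‑$(k+1)$ monomials written in the statement (under the reindexing in which $z^{j+1}$ is newly inserted and the old $z^{j+1},x^{j+1},\ldots$ are promoted to $z^{j+2},x^{j+2},\ldots$); monomials with $|w_j|=0$ contribute nothing. This is a one‑line block‑matrix computation, or an easy induction on $|w_j|$, essentially identical to the computation in the Remark after Proposition \ref{GHV}.

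The second step is the combinatorial observation that a split monomial of an order‑$(k+1)$ polynomial determines both the order‑$k$ monomial it arises from and the split index $i$: the single letter $\ell^{(j)}_{i+1}$ appears as the $(j{+}1)$st $z$‑label, the $x^j$‑word has length $i$, and $w_j$ is recovered as the concatenation of the $x^j$‑word, the letter $\ell^{(j)}_{i+1}$, and the $x^{j+1}$‑word, while all other words are unchanged. Therefore distinct pairs (order‑$k$ monomial, split index) produce distinct order‑$(k+1)$ monomials, so no cancellation can occur in the expansion of ${_j\Delta}q$ from Step 1.

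Both implications are then coefficient comparisons. For the ``only if'' direction, if $q$ is a polynomial antiderivative with ${_j\Delta}q=p$, Steps 1--2 show that each split monomial occurring in $p$ occurs with coefficient equal to $q_{(w,v)}$ of its reconstructed order‑$k$ monomial; in particular, whenever one member of a split family lies in $p$, the entire family lies in $p$ with that common coefficient, which is precisely the asserted property, and the displayed form of $q$ is read off. For the ``if'' direction, given $p$ with that property, define $q$ by the displayed formula: to the order‑$k$ monomial reconstructed from any split family occurring in $p$ assign the family's common coefficient, and assign $0$ to every other order‑$k$ monomial type. Step 2 shows this assignment is well defined, and applying the Step 1 identity to this $q$ returns $p$, since $p$—each of whose $z$‑words has length $1$—consists entirely of split monomials, and the hypothesis guarantees every family occurs in full.

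I expect no serious obstacle; the one thing requiring care (and the only real source of error) is the index relabeling between $q$ (order $k$, variables $x^0,\ldots,x^k$, $z^1,\ldots,z^k$) and $p$ (order $k+1$, with $z^{j+1}$ freshly inserted and the old $z^{j+1},\ldots,z^k,x^{j+1},\ldots,x^k$ shifted to $z^{j+2},\ldots,z^{k+1},x^{j+2},\ldots,x^{k+1}$), together with keeping the degree‑one constraint on each $z$‑word straight throughout. I would also note in passing, exactly as in the Remark after Proposition \ref{GHV}, that the stated property can be checked to be equivalent to the identity ${_j\Delta}p={_{j+1}\Delta}p$, so the proposition admits an alternative derivation along the lines of Theorem \ref{higher-main}; but the monomial computation above is shorter and self‑contained.
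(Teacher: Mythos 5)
Your argument is correct and is essentially the intended one: the paper states this proposition without proof, as the evident generalization of Proposition \ref{GHV}, and your monomial expansion of ${_j\Delta}q$ together with the observation that a split monomial uniquely determines its parent monomial and split index (so no cancellation occurs) is exactly the direct coefficient-comparison argument sketched there in the order-one case. The index relabeling between the order-$k$ and order-$(k+1)$ variables, which you correctly identify as the only delicate point, is handled properly in your write-up.
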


\subsection{The Functions are Complex Analytic}

In this section, we will consider three types of analyticity (see \cite[Chapter 7]{Verb}); in each case, it will be shown that when nc functions of order $k+1$, $F_0,\ldots,F_k$, have the given analyticity property, the antiderivative also has the same type of analyticity. 

Let $\V_j$ be vector spaces over $\mathbb{C}$, and let $\W_j$ be Banach spaces equipped with an admissible system of rectangular matrix norms on $\W_j^{p\times q}$, $p,q\in\mathbb{N}$, that is, all the rectangular block  injections
$$\iota^{(j; s_1,\ldots,s_m)}_{\alpha\beta}\colon\W_j^{s_\alpha\times s_\beta}\to\W_j^{s\times s},\quad W\mapsto E_{\alpha}WE_{\beta}^\top$$
and the projections
$$\pi^{(j;s_1,\ldots,s_m)}_{\alpha\beta}\colon\W_j^{s\times s}\to\W_j^{s_\alpha\times s_\beta},\quad W\mapsto E_{\alpha}^\top WE_\beta$$
are bounded linear operators, $j=0,\ldots,k$,  $s_1,\ldots,s_m\in\mathbb{N}$, $s=s_1+\cdots +s_m$, where%
$$E_{\alpha}=\text{col}[0_{s_1\times s_{\alpha}},\ldots,0_{s_{\alpha-1}\times s_{\alpha}},I_{s_{\alpha}},0_{s_{\alpha+1}\times s_{\alpha}},\ldots,0_{s_m\times s_{\alpha}} ].$$%

Let $\Omega^{(j)}\subseteq\V_{j, \nc}$ be finitely open nc sets, that is, for each $n\in\mathbb{N}$, the intersection of $\Omega^{(j)}_n$ with any finite-dimensional subspace $\U$ of $\V_j^{n\times n}$ is open in the Euclidean topology of $\U$. Recall \cite{Verb} that a nc function $f \in \T^k(\Omega^{(0)}, \ldots, \Omega^{(k)};\W_{0, \nc}, \ldots, \W_{k, \nc})$ is called

\begin{enumerate}
\item[I(a)] $W$-locally bounded on slices if for every $n_0, \ldots, n_k \in \NN$, $Y^j \in \Omega_{n_j}^{(j)}$, and $Z^j \in \V_j^{n_j \times n_j}$, $j = 0, \ldots, k$, and $W^j \in \W_j^{n_{j - 1} \times n_j}$, $j = 1, \ldots, k$, there exists a $\delta > 0$ such that
$$\sup_{t\in\mathbb{C}\colon |t|<\delta}\|f(Y^0 + tZ^0, \ldots, Y^k + tZ^k)(W^1, \ldots, W^k)\| < \infty.$$
\item[I(b)] $W$-Gateaux ($G_W$-) differentiable if for every $n_0, \ldots, n_k \in \NN$, $Y^j \in \Omega_{n_j}^{(j)}$, and $Z^j \in \V_j^{n_j \times n_j}$, $j = 0, \ldots, k$, and $W^j \in \W_j^{n_{j - 1} \times n_j}$ , $j = 1, \ldots, k$, the (complex) $G$-derivative,
$$\frac{d}{dt}f(Y^0 + tZ^0, \ldots, Y^k + tZ^k)(W^1, \ldots, W^k)\Big|_{t=0}$$
exists.
\item[I(c)] $W$-analytic on slices if for every $Y^j \in \Omega_{n_j}^{(j)}$ and $Z^j \in \V_j^{n_j \times n_j}$, $j = 0, \ldots, k$, and $W^j \in \W_j^{n_{j - 1} \times n_j}$, $j = 1, \ldots, k$, $f(Y^0 + tZ^0, \ldots, Y^k + tZ^k)(W^1, \ldots, W^k)$ is an analytic function of $t$ in a neighborhood of $0$.
\end{enumerate}

By Theorem 7.41 in \cite{Verb}, conditions I(a)--I(c) are all equivalent.
\begin{theorem}
Let $\Omega^{(0)}, \ldots, \Omega^{(k)}$ be right admissible nc sets. Suppose that $$F_j \in \T^{k + 1}(\Omega^{(0)}, \ldots,  \Omega^{(j-1)},\Omega^{(j)},\Omega^{(j)},\Omega^{(j+1)},\ldots,\Omega^{(k)};\W_{0, \nc}, \ldots, \W_{k, \nc})$$ are $W$-analytic on slices, $j = 0, \ldots, k$. Then there exists an nc function  $$f \in \T^k(\Omega^{(0)}, \ldots, \Omega^{(k)};\W_{0, \nc}, \ldots, \W_{k, \nc}),$$ which is $W$-analytic on slices and satisfies
$${_j\Delta}f = F_j,\quad j = 0, \ldots, k,$$
if and only if
$${_i\Delta}F_j = {_{j + 1}\Delta}F_i,\quad 0\le i\le j\le k.$$
Further, $f$ is uniquely determined up to a $k$-linear mapping as in Theorem \ref{higher-main} .
\end{theorem}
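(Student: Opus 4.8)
The plan is to obtain this theorem from Theorem~\ref{higher-main} by adding an analyticity bookkeeping argument on top of it. The equivalence between the compatibility conditions ${_i\Delta}F_j = {_{j + 1}\Delta}F_i$, $0\le i\le j\le k$, and the existence of \emph{some} $f\in\T^k(\Omega^{(0)},\ldots,\Omega^{(k)};\W_{0,\nc},\ldots,\W_{k,\nc})$ with ${_j\Delta}f=F_j$ is precisely Theorem~\ref{higher-main}, the ``only if'' direction being a special case of Proposition~\ref{deltas}; and the uniqueness statement is inherited verbatim from Theorem~\ref{higher-main}, since the correcting $k$-linear term $C$ in \eqref{C} involves none of the matrix arguments and is therefore automatically analytic on slices. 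So the only genuinely new point is: when the $F_j$ are analytic on slices, the antiderivative produced by Theorem~\ref{higher-main} may be chosen analytic on slices. Since conditions I(a)--I(c) are equivalent by Theorem~7.41 of \cite{Verb}, I would only check that $f$ is $W$-analytic on slices.

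The core of the argument is to read this off the explicit formula. Fixing base points $Y^j\in\Omega^{(j)}_{s_j}$ and the multilinear map $g$ as in Theorem~\ref{almost result again}, the antiderivative on the amplified product $\Omega'=\bigl(\bigsqcup_{m_0}\Omega^{(0)}_{s_0m_0}\bigr)\times\cdots\times\bigl(\bigsqcup_{m_k}\Omega^{(k)}_{s_km_k}\bigr)$ is given by \eqref{higher-f}. The term $G(Z^1,\ldots,Z^k)$ there is a fixed finite sum of values of $g$ and does not involve the matrix arguments at all, hence is entire in every variable and in particular analytic on slices. For each of the summands
$$F_j(I_{m_0}\otimes Y^0,\ldots,I_{m_j}\otimes Y^j,X^j,\ldots,X^k)(Z^1,\ldots,Z^j,X^j-I_{m_j}\otimes Y^j,Z^{j+1},\ldots,Z^k),$$
I would substitute affine paths $X^\ell=P^\ell+tQ^\ell$ and hold the $Z^\ell$ fixed: then the matrix arguments of $F_j$ become an affine function of $t$ (a slice), lying in the domain for small $|t|$ by finite openness of the $\Omega^{(\ell)}$, and the single $Z$-slot carrying $X^j-I_{m_j}\otimes Y^j$ becomes $(P^j-I_{m_j}\otimes Y^j)+tQ^j$. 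By multilinearity of $F_j$ in its $Z$-slots, the summand becomes a $\mathbb{C}[t]$-combination of finitely many terms of the form $t\mapsto F_j(\text{affine in }t)(\text{fixed tuple})$, each analytic near $0$ because $F_j$ is $W$-analytic on slices; a finite $\mathbb{C}[t]$-combination of such terms is again analytic in $t$. Hence $f$ is $W$-analytic on slices on $\Omega'$.

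It then remains to propagate analyticity from $\Omega'$ to $\Omega^{(0)}\times\cdots\times\Omega^{(k)}$. For arbitrary $P^\ell\in\Omega^{(\ell)}_{n_\ell}$, the amplification $I_{s_\ell}\otimes P^\ell=P^\ell\oplus\cdots\oplus P^\ell$ lies in $\Omega^{(\ell)}_{s_\ell n_\ell}\subseteq\Omega'_\ell$ because $\Omega^{(\ell)}$ is closed under direct sums. Since $f$ respects direct sums and intertwinings, the value $f(P^0,\ldots,P^k)(W^1,\ldots,W^k)$ is recovered from $f(I_{s_0}\otimes P^0,\ldots,I_{s_k}\otimes P^k)$ evaluated on suitably structured (all-ones, resp.\ zero-padded) versions of the $W^i$, by applying bounded block projections --- this is the same amplification/compression device already used in the proofs of Theorems~\ref{almost result} and \ref{almost result again}. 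Slicing the amplified point $t\mapsto I_{s_\ell}\otimes(P^\ell+tQ^\ell)$ stays in $\Omega'_\ell$ for small $|t|$, where $f$ is analytic on slices by the previous paragraph, and composing with a bounded block projection preserves analyticity in $t$; hence $t\mapsto f(P^0+tQ^0,\ldots,P^k+tQ^k)(W^1,\ldots,W^k)$ is analytic near $0$.

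I expect this last propagation step to be the only one requiring real care rather than routine bookkeeping: one must make sure that passing from the amplified value to the value at $P$ does not destroy analyticity. This is exactly where the hypothesis that each $\W_j$ carries an \emph{admissible} system of rectangular matrix norms is used --- boundedness of the block injections $\iota^{(j;\ldots)}_{\alpha\beta}$ and projections $\pi^{(j;\ldots)}_{\alpha\beta}$ guarantees that the block on the left depends analytically on $t$ whenever the amplified value on the right does. Everything else reduces, as above, to multilinearity of the $F_j$, the explicit formula \eqref{higher-f}, and Theorem~\ref{higher-main}.
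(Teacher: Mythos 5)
Your proposal is correct and follows essentially the same route as the paper: reduce existence and uniqueness to Theorem \ref{higher-main}, then transfer the regularity of the $F_j$ to $f$ through the explicit antiderivative formula \eqref{actual nicer form} and the equivalence of conditions I(a)--I(c). The only differences are cosmetic: the paper verifies I(a) ($W$-local boundedness on slices), for which each summand is bounded for small $|t|$ with no need for your multilinearity decomposition into a $\mathbb{C}[t]$-combination, and it suppresses your amplification/compression step by reading the difference formula as centered at an arbitrary point rather than at the fixed base point.
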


\begin{proof}
By Theorem \ref{higher-main}, the above conditions are equivalent to the existence of an antiderivative $f$. It suffices to show that $f$ is $W$-locally bounded on slices. We use \eqref{actual nicer form} in which $X^j = Y^j + tZ^j$ for $j = 0, \ldots k$:
\begin{multline*}
f(Y^0 + tZ^0, \ldots, Y^k + tZ^k)(W^1, \ldots, W^k) = f(Y^0, \ldots, Y^k)(W^1, \ldots, W^k) \\
+ \sum_{j = 0}^k F_j(Y^0, \ldots, Y^j, Y^j + tZ^j, \ldots, Y^k + tZ^k)(W^1, \ldots, W^j, tZ^j, W^{j + 1}, \ldots, W^k) \\
= f(Y^0, \ldots, Y^k)(W^1, \ldots, W^k)\\
 + t\sum_{j = 0}^k F_j(Y^0, \ldots, Y^j, Y^j + tZ^j, \ldots, Y^k + tZ^k)(W^1, \ldots, W^j, Z^j, W^{j + 1}, \ldots, W^k) .
\end{multline*}
Since the functions $F_j$ are $W$-locally bounded on slices, for fixed $Y^0, \ldots, Y^k$, $Z^0, \ldots, Z^k$, and $W^1, \ldots, W^k$, there exists a $\delta > 0$ such that
$$F_j(Y^0, \ldots, Y^j, Y^j + tZ^j, \ldots, Y^k + tZ^k)(W^1, \ldots, W^j, Z^j, W^{j + 1}, \ldots, W^k)$$
are bounded, say by $M_j>0$, for $|t| < \delta$.  Then
$$
\| f(Y^0 + tZ^0, \ldots, Y^k + tZ^k)(W^1, \ldots, W^k)\|\le \| f(Y^0, \ldots, Y^k)(W^1, \ldots, W^k)\| +\delta\sum_{j=0}^kM_j.
$$
It follows that $f$ is $W$-locally bounded on slices, and hence, $W$-analytic on slices as desired.
\end{proof}

For our second notion of analyticity, we require additionally that $\V_j$ are complex Banach spaces equipped with an admissible system of rectangular matrix norms and that $\Omega^{(j)}\subseteq\V_{j,\nc}$ are open nc sets, i.e., $\Omega^{(j)}_n$ are open in $\V_j^{n\times n}$, $n=1,2,\ldots$, $j=0,\ldots,k$. We
say that an nc function $$f \in \T^k(\Omega^{(0)}, \ldots, \Omega^{(k)};\W_{0, \nc}, \ldots, \W_{k, \nc})$$ is called

\begin{enumerate}
\item[II(a)] locally bounded if for every $n_0, \ldots, n_k \in \NN$ and $Y^j \in \Omega^{(j)}$, $j = 0, \ldots k$, there exists a $\delta_j > 0$ such that 
$$\sup_{\|X^j-Y^j\|<\delta_j,\, j = 0, \ldots, k}\|f(X^0, \ldots, X^k)\|_{\L^k} < \infty,$$
where $\|\cdot\|_{\L^k}$ is the norm of a $k$-linear form:
$$\|\omega\|_{\L^k}=\sup_{\|W^1\|=\ldots=\|W^k\|=1}\|\omega(W^1,\ldots,W^k)\|.$$

\item[II(b)] Gateaux ($G$)-differentiable if for every $n_0, \ldots, n_k \in \NN$, $Y^j \in \Omega_{n_j}^{(j)}$, and $Z^j \in \V_j^{n_j \times n_j}$, $j = 0, \ldots k$,
$$\frac{d}{dt}f(Y^0 + tZ^0, \ldots, Y^k + tZ^k)\Big|_{t=0}$$
exists in the norm $\|\cdot\|_{\L^k}$.

\item[II(c)] analytic if $f$ is locally bounded and $G$-differentiable.
\end{enumerate}

By Theorem 7.46 in \cite{Verb}, II(a)--II(c) are all equivalent. 

\begin{theorem} \label{analytic}
Let $\Omega^{(0)}, \ldots, \Omega^{(k)}$ be open nc sets. Suppose that $$F_j \in \T^{k + 1}(\Omega^{(0)}, \ldots, \Omega^{(j-1)},\Omega^{(j)},\Omega^{(j)},\Omega^{(j+1)},\ldots,\Omega^{(k)};\W_{0, \nc}, \ldots, \W_{k, \nc})$$ are analytic for $j = 0, \ldots, k$. Then there exists an analytic nc function $$f \in \T^k(\Omega^{(0)}, \ldots, \Omega^{(k)};\W_{0, \nc}, \ldots, \W_{k, \nc})$$ such that $${_j\Delta}f = F_j, \quad j = 0, \ldots, k,$$
if and only if
$${_i\Delta}F_j = {_{j + 1}\Delta}F_i, \quad 0\le i\le j\le k.$$
Further, $f$ is uniquely determined up to a $k$-linear mapping as in Theorem \ref{higher-main}. 
\end{theorem}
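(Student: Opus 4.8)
The plan is to derive the statement from Theorem~\ref{higher-main} together with the equivalence II(a)$\Leftrightarrow$II(c) of \cite[Theorem 7.46]{Verb}, in the same spirit as the preceding theorem on $W$-analyticity on slices. If an analytic antiderivative $f$ with ${_j\Delta}f=F_j$ exists then in particular \emph{some} antiderivative exists, so the ``only if'' direction is immediate from Proposition~\ref{deltas}. Conversely, assume ${_i\Delta}F_j={_{j+1}\Delta}F_i$ for $0\le i\le j\le k$. By Theorem~\ref{higher-main} there is an $f\in\T^k(\Omega^{(0)},\ldots,\Omega^{(k)};\W_{0,\nc},\ldots,\W_{k,\nc})$ with ${_j\Delta}f=F_j$, $j=0,\ldots,k$, constructed as in Theorem~\ref{almost result again}, the required $k$-linear form $g$ being supplied by Proposition~\ref{g}. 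It remains only to verify that this $f$ is analytic, and since for nc functions local boundedness and analyticity coincide by \cite[Theorem 7.46]{Verb}, it suffices to show $f$ is locally bounded.

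Fix sizes $n_0,\ldots,n_k$ and points $Y^j\in\Omega^{(j)}_{n_j}$, $j=0,\ldots,k$. Apply Corollary~\ref{nicer form} with $s_j=n_j$ and $m_j=1$, so that $I_{m_j}\otimes Y^j=Y^j$ and $G$ reduces to $g=f(Y^0,\ldots,Y^k)$; then \eqref{actual nicer form} becomes
\begin{multline*}
f(X^0,\ldots,X^k)(Z^1,\ldots,Z^k)=f(Y^0,\ldots,Y^k)(Z^1,\ldots,Z^k)\\
+\sum_{j=0}^k F_j(Y^0,\ldots,Y^j,X^j,\ldots,X^k)(Z^1,\ldots,Z^j,X^j-Y^j,Z^{j+1},\ldots,Z^k)
\end{multline*}
for all $X^j\in\Omega^{(j)}_{n_j}$. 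Because $\Omega^{(0)},\ldots,\Omega^{(k)}$ are open and each $F_j$ is locally bounded, there are $\delta>0$ and constants $M_0,\ldots,M_k$ so that, whenever $\|X^i-Y^i\|<\delta$ for all $i$, the tuple $(Y^0,\ldots,Y^j,X^j,X^{j+1},\ldots,X^k)$ lies in the domain of $F_j$, within distance $\delta$ of $(Y^0,\ldots,Y^j,Y^j,Y^{j+1},\ldots,Y^k)$, and $\|F_j(Y^0,\ldots,Y^j,X^j,\ldots,X^k)\|_{\L^{k+1}}\le M_j$; moreover $f(Y^0,\ldots,Y^k)$ is a bounded $k$-linear form, being assembled (via Proposition~\ref{g}) from values of the locally bounded functions $F_j$ at fixed points.

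Evaluating the $(k+1)$-linear form $F_j(Y^0,\ldots,Y^j,X^j,\ldots,X^k)$ on the argument $X^j-Y^j$ in its distinguished slot costs only the factor $\|X^j-Y^j\|<\delta$, so each summand above, viewed as a $k$-linear form in $(Z^1,\ldots,Z^k)$, has $\L^k$-norm at most $M_j\delta$, whence
\begin{equation*}
\|f(X^0,\ldots,X^k)\|_{\L^k}\le\|f(Y^0,\ldots,Y^k)\|_{\L^k}+\delta\sum_{j=0}^k M_j<\infty
\end{equation*}
for all $X^j$ with $\|X^j-Y^j\|<\delta$. This is exactly local boundedness of $f$, so $f$ is analytic, and uniqueness up to a $k$-linear mapping as in \eqref{C} is part of Theorem~\ref{higher-main}. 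The proof is essentially bookkeeping; the only points requiring care are the verification that, for $X^i$ near $Y^i$, the arguments fed to $F_j$ in \eqref{actual nicer form} stay inside the open domains and near the ``diagonal'' evaluation point, and the (routine) boundedness of $f(Y^0,\ldots,Y^k)$ — the structural feature making the estimate go through being that the distinguished argument of $F_j$ in \eqref{actual nicer form} is $X^j-I_{m_j}\otimes Y^j$, which vanishes linearly as $X^j\to Y^j$.
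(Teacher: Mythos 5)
Your proposal is correct and follows essentially the same route as the paper: both reduce to showing local boundedness via the equivalence II(a)$\Leftrightarrow$II(c) of \cite[Theorem 7.46]{Verb}, apply the representation \eqref{actual nicer form} centered at $(Y^0,\ldots,Y^k)$, and estimate each summand by the local bound $M_j$ on $F_j$ times the factor $\|X^j-Y^j\|<\delta$ coming from the distinguished argument, with the boundedness of $f(Y^0,\ldots,Y^k)$ supplied by its construction from Proposition~\ref{g}. Your treatment is, if anything, slightly more explicit than the paper's about why the evaluation points stay in the domain of $F_j$ and why $g$ is bounded.
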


\begin{proof}
We again use  \eqref{actual nicer form} so that $f$ is given by
\begin{multline*}
f(X^0, \ldots, X^k)(W^1, \ldots, W^k) = f(Y^0, \ldots, Y^k)(W^1, \ldots, W^k) \\
+ \sum_{j = 0}^k F_j(Y^0, \ldots, Y^j, X^j, \ldots, X^k)(W^1, \ldots, W^j, X^j - Y^j, W^{j + 1}, \ldots, W^k). \end{multline*}
Since the nc functions $F_0, \ldots, F_k$ are analytic, they are locally bounded. Thus, for every $Y^j \in \Omega^{(j)}$, there exists a $\delta_j$ such that $$\|F_j(X^0,\ldots, X^k)\|_{\L^k}$$ is bounded, say by $M_j$ when $\|X^j-Y^j\|< \delta_j$, $j= 0, \ldots, k$. It follows from Theorem \ref{derivation properties}, Theorem \ref{almost result again}, Proposition \ref{with zero functions}, Proposition \ref{g}, and Corollary \ref{nicer form} that $f(Y^0,\ldots, Y^k)$ can be chosen a bounded $k$-linear mapping. Then
$$\|f(X^0,\ldots,X^k)\|_{\L^k}\le\|f(Y^0,\ldots,Y^k)\|_{\L^k}+\sum_{j=0}^k\delta_jM_j$$ when $\|X^j-Y^j\|< \delta_j$, $j = 0, \ldots, k$. It follows that $f$ is locally bounded and thus analytic.
\end{proof}

Finally, for our third notion of analyticity, we assume that $\V_j$, $\W_j$ are operator spaces and $\Omega^{(j)}$ are uniformly open nc sets, $j=0,\ldots,k$, that is, for every $s\in\mathbb{N}$ and $Y\in\Omega^{(j)}_s$, there exists a nc ball
$$B_{\nc}(Y,\epsilon)=\coprod_{m=1}^\infty\{X\in\V_j^{sm\times sm}\colon \|X-I_m\otimes Y\|<\epsilon\}$$
that is contained in $\Omega^{(j)}$. We say that an nc function $$f \in \T^k(\Omega^{(0)}, \ldots, \Omega^{(k)};\W_{0, \nc}, \ldots, \W_{k, \nc})$$ is called

\begin{enumerate}
\item[III(a)] uniformly locally completely bounded if for every $Y^j \in \Omega^{(j)}$, $j = 0, \ldots, k$, there exists a $\delta_j > 0$ such that 
$$\sup_{X^j \in B_{\nc}(Y^j, \delta_j),\, j=0,\ldots,k}\|f(X^0, \ldots, X^k)\|_{\L^k_{\cb}} < \infty.$$
 Here, for a $k$-linear mapping $\omega\colon \W_1\times \cdots\times\W_k\to\W_0$, we define
$$\|\omega\|_{\L^k_{\cb}}=\sup_{n_0,\ldots,n_k\in\mathbb{N}}\|\omega^{(n_0,\ldots,n_k)}\|_{\L^k}$$
where the $k$-linear mapping $\omega^{(n_0,\ldots,n_k)}\colon\W_1^{n_0\times n_1}\times\cdots\times\W^{n_{k-1}\times n_k}\to\W_0^{n_0\times n_k}$ is defined by
$$\omega(W^1,\ldots,W^k)=(W^1\odot\cdots\odot W^k)\omega,$$
$W^1\odot\cdots\odot W^k\in(\W_1\otimes\cdots\otimes\W_k)^{n_0\times n_k}$ is the usual product of matrices, however the entries are multiplied using tensor products, and $\omega$ acts on this matrix on the right entrywise -- we identify $k$-linear forms and associated linear forms on a tensor product of $k$ factors; see details in Section 7.4 of \cite{Verb}.

\item[III(b)] completely bounded Gateaux ($G_{\cb}$)-differentiable if for every $n_0, \ldots, n_k \in \NN$, $Y^j \in \Omega_{n_j}^{(j)}$, and $Z^j \in \V_j^{n_j \times n_j}$, $j = 0, \ldots k$,
$$\frac{d}{dt}f(Y^0 + tZ^0, \ldots, Y^k + tZ^k)\Big|_{t=0}$$
exists in the norm $\|\cdot\|_{\L^k_{\cb}}$.
\item[III(c)] uniformly completely bounded (uniformly cb-) analytic if $f$ is uniformly locally \\*completely bounded and $G_{\cb}$-differentiable.
\end{enumerate}

By Proposition 7.53 in \cite{Verb}, III(a)--III(c) are all equivalent.

\begin{theorem}
Let $\Omega^{(0)}, \ldots, \Omega^{(k)}$ be uniformly open nc sets. Suppose that $$F_j \in \T^{k + 1}(\Omega^{(0)}, \ldots, \Omega^{(j-1)},\Omega^{(j)},\Omega^{(j)},\Omega^{(j+1)},\ldots,\Omega^{(k)};\W_{0, \nc}, \ldots, \W_{k, \nc})$$ are uniformly cb-analytic for $j = 0, \ldots, k$. Then there exists a uniformly cb-analytic nc function $$f \in \T^k(\Omega^{(0)}, \ldots, \Omega^{(k)};\N_{0, \nc}, \ldots, \N_{k, \nc})$$ such that
$${_j\Delta}f = F_j \hspace{10mm} j = 0, \ldots, k,$$
if and only if
$${_i\Delta}F_j = {_{j + 1}\Delta}F_i, \hspace{10mm} j = 0, \ldots, k.$$
Further, $f$ is uniquely determined up to a $k$-linear mapping as in \ref{higher-main}. 
\end{theorem}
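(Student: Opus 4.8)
The plan is to follow the proof of Theorem~\ref{analytic} essentially verbatim, systematically replacing the norm $\|\cdot\|_{\L^k}$ on $k$-linear mappings by the completely bounded norm $\|\cdot\|_{\L^k_{\cb}}$. By Theorem~\ref{higher-main} the compatibility conditions ${_i\Delta}F_j={_{j+1}\Delta}F_i$ ($0\le i\le j\le k$) are equivalent to the existence of \emph{some} antiderivative $f\in\T^k(\Omega^{(0)},\ldots,\Omega^{(k)};\W_{0,\nc},\ldots,\W_{k,\nc})$, and by Corollary~\ref{nicer form} such an $f$ is given explicitly by \eqref{actual nicer form}. Since conditions III(a)--III(c) are equivalent (Proposition~7.53 in \cite{Verb}), it suffices to check that this particular $f$ is uniformly locally completely bounded.

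Fix $Y^j\in\Omega^{(j)}$, $j=0,\ldots,k$. Each $F_j$ is uniformly locally completely bounded, so, shrinking the radii to a common value, there exist $\delta>0$ and $M_j>0$ with $\|F_j(X^0,\ldots,X^k)\|_{\L^{k+1}_{\cb}}\le M_j$ whenever $X^i\in B_{\nc}(Y^i,\delta)$ for every $i$; note that $I_m\otimes Y^i\in B_{\nc}(Y^i,\delta)$ for all $m$, so the amplified points occurring in \eqref{actual nicer form} automatically lie in these balls. Plugging $X^j\in B_{\nc}(Y^j,\delta)$ into \eqref{actual nicer form} and estimating term by term, the $j$-th summand is $F_j$ evaluated in the admissible region with its $(j+1)$-st argument set equal to $X^j-I_{m_j}\otimes Y^j$, so, by the cb-analog of the estimate used in the proof of Theorem~\ref{analytic}, its $\L^k_{\cb}$-norm is at most $M_j\,\|X^j-I_{m_j}\otimes Y^j\|<\delta M_j$.

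It remains to control the ``constant'' term $f(Y^0,\ldots,Y^k)=g$, whose amplification is $f(I_{m_0}\otimes Y^0,\ldots,I_{m_k}\otimes Y^k)$; the point to verify is that $g$ is a \emph{completely bounded} $k$-linear mapping. This is where the bookkeeping lies, but it is routine: by Proposition~\ref{g}, $g$ is a finite sum of terms built from the maps $g_\ell$ of Proposition~\ref{with zero functions} via the bimodule actions of Proposition~\ref{bimodule} with the fixed matrix units $E_{i_0,1},E_{i_\ell,1},E_{1,i_\ell}$; by \eqref{gj}, each $g_\ell$ is in turn a finite sum of terms ${_\ell D}(E_{i1})\cdot E_{1i}$, and ${_\ell D}(E_{i1})$ is nothing but $F_\ell$ evaluated at the fixed tuple $(Y^0,\ldots,Y^{\ell-1},Y^\ell,Y^\ell,Y^{\ell+1},\ldots,Y^k)$ with the fixed matrix $E_{i1}Y^\ell-Y^\ell E_{i1}$ inserted in slot $\ell+1$, hence completely bounded because $F_\ell$ is. Finite sums and the bimodule actions by fixed matrices preserve complete boundedness, so $\|g\|_{\L^k_{\cb}}<\infty$, and the amplification $f(I_{m_0}\otimes Y^0,\ldots,I_{m_k}\otimes Y^k)$ contributes at most $\|g\|_{\L^k_{\cb}}$ to $\|f(X^0,\ldots,X^k)\|_{\L^k_{\cb}}$. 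Combining the estimates gives $\|f(X^0,\ldots,X^k)\|_{\L^k_{\cb}}\le\|g\|_{\L^k_{\cb}}+\sum_{j=0}^k\delta M_j<\infty$ whenever $X^j\in B_{\nc}(Y^j,\delta)$, so $f$ is uniformly locally completely bounded and therefore uniformly cb-analytic; that $f$ is determined up to a $k$-linear mapping is inherited from Theorem~\ref{higher-main}. The only genuine obstacle is making the cb-norm estimates precise in the operator-space framework of Section~7.4 of \cite{Verb} (both for the $F_j$ terms and for $g$); everything else is formally identical to the proof of Theorem~\ref{analytic}.
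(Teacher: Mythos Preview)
Your proposal is correct and follows essentially the same approach as the paper: both invoke Theorem~\ref{higher-main} for existence, use the explicit formula \eqref{actual nicer form} from Corollary~\ref{nicer form}, bound the $F_j$ terms via uniform local complete boundedness, and argue that $g=f(Y^0,\ldots,Y^k)$ can be chosen completely bounded by tracing through Theorem~\ref{derivation properties}, Proposition~\ref{with zero functions}, and Proposition~\ref{g}. If anything, you give slightly more detail than the paper on why $g$ is completely bounded (unwinding the finite sums and bimodule actions explicitly), and you note the useful point that $I_m\otimes Y^i\in B_{\nc}(Y^i,\delta)$ automatically.
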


\begin{proof}
We again use  \eqref{actual nicer form} so that $f$ is given by
\begin{multline*}
f(X^0, \ldots, X^k)(W^1, \ldots, W^k)  
= f(I_{m_0} \otimes Y^0, \ldots, I_{m_k} \otimes Y^k)(W^1, \ldots, W^k) \\
+ \sum_{j = 0}^k F_j(I_{m_0} \otimes Y^0, \ldots, I_{m_j} \otimes Y^j, X^j, \ldots, X^k)(W^1, \ldots, W^j, X^j - I_{m_j} \otimes Y^j, W^{j + 1}, \ldots, W^k) .\end{multline*}
Since the nc functions $F_0, \ldots, F_k$ are uniformly cb-analytic, they are uniformly locally completely bounded. Thus, for every $Y^j \in \Omega^{(j)}$, there exists a $\delta_j$ such that $$\|F_j( X^0,  \ldots, X^k)\|_{\L^k_{\cb}}$$ is bounded on $B_{\nc}(Y^0,\delta_0)\times\cdots\times B_{\nc}(Y^k,\delta_k)$, say by $M_j$, $j= 0, \ldots, k$. It follows from Theorem \ref{derivation properties}, Theorem \ref{almost result again}, Proposition \ref{with zero functions}, Proposition \ref{g}, and Corollary \ref{nicer form} that $f(Y^0,\ldots, Y^k)$ can be chosen a completely bounded $k$-linear mapping. Then
$$\|f(X^0,\ldots,X^k)\|_{\L^k_{\cb}}\le\|f(Y^0,\ldots,Y^k)\|_{\L^k_{\cb}}+\sum_{j=0}^k\delta_jM_j$$ when $X^j\in B_{\nc}(Y^j,\delta_j)$, $j = 0, \ldots, k$. It follows that $f$ is uniformly locally completely bounded and thus uniformly cb-analytic.\end{proof}

\end{document}